\newcommand{\myitem}[1][]{
  \protected@edef\@currentlabel{#1}%
\item[#1]
}
\DeclareMathOperator{\modf}{mod}
\newcommand{\op}{{^\textrm{op}}}
\newcommand{\HomT}[1]{\mathscr{T}(T,#1)}
\newcommand{\Db}{\mathscr{D}^b}
\newcommand{\D}{\mathscr{D}}
\newcommand{\T}{\mathscr{T}}
\DeclareMathAlphabet{\mathpzc}{OT1}{pzc}{m}{it}
\def\cD{\mathscr{D}}
\def\cF{\mathscr{F}}
\def\cO{\mathscr{O}}
\def\cT{\mathscr{T}}
\def\cX{\mathscr{X}}
\def\BZ{\mathbb{Z}}
\def\add{\operatorname{add}}
\def\adots{\mathinner{\mkern1mu\raise1.0pt\vbox{\kern7.0pt\hbox{.}}\mkern2mu\raise4.0pt\hbox{.}\mkern2mu\raise7.0pt\hbox{.}\mkern1mu}}
\def\D{\cD}
\def\dddots{\mathinner{\mkern1mu\raise10.0pt\vbox{\kern7.0pt\hbox{.}}\mkern2mu\raise5.3pt\hbox{.}\mkern2mu\raise1.0pt\hbox{.}\mkern1mu}}
\def\dddotssmall{\mathinner{\mkern1mu\raise7.0pt\vbox{\kern7.0pt\hbox{.}}\mkern-1mu\raise4pt\hbox{.}\mkern-1mu\raise1.0pt\hbox{.}\mkern1mu}}
\def\Db{\cD^{\operatorname{b}}}
\def\End{\operatorname{End}}
\def\Ext{\operatorname{Ext}}
\def\H{\operatorname{H}}
\def\Hom{\operatorname{Hom}}
\def\inj{\operatorname{inj}}
\def\Ker{\operatorname{Ker}}
\def\mod{\operatorname{mod}}
\def\proj{\operatorname{proj}}
\def\PSL2{\operatorname{PSL}_2}
\def\rad{\operatorname{rad}}
\def\SL2{\operatorname{SL}_2}
\numberwithin{equation}{section}
\newtheorem{Lemma}{Lemma}[section]
\newtheorem{Theorem}[Lemma]{Theorem}
\newtheorem{Proposition}[Lemma]{Proposition}
\newtheorem{Corollary}[Lemma]{Corollary}
\theoremstyle{definition}
\newtheorem{Definition}[Lemma]{Definition}
\newtheorem{Setup}[Lemma]{Setup}
\newtheorem{Example}[Lemma]{Example}
\newtheorem*{bfhpg*}{}
\newenvironment{VarDescription}[1]%
  {\begin{list}{}{%
    \settowidth{\labelwidth}{\textbf{#1:}}%
    \setlength{\leftmargin}{\labelwidth}\addtolength{\leftmargin}{\labelsep}}}%
  {\end{list}}
\begin{document}


\title{$d$-abelian quotients of $(d+2)$-angulated categories}

\author[kmj]{Karin M.\ Jacobsen\corref{cor1}}
\ead{karin.jacobsen@ntnu.no}
\ead[url]{https://www.ntnu.edu/employees/karin.jacobsen}

\address[kmj]{Norwegian University of Science and Technology, 
Department of Mathematical Sciences, 
Sentralbygg 2, Gl\o shaugen, 
7491 Trondheim, Norway}

\author[pj]{Peter J\o rgensen}
\ead{peter.jorgensen@ncl.ac.uk}
\ead[url]{http://www.staff.ncl.ac.uk/peter.jorgensen}
\address[pj]{School of Mathematics and Statistics,
Newcastle University, Newcastle upon Tyne NE1 7RU, United Kingdom}

\cortext[cor1]{Corresponding author}


\begin{keyword}
Cluster tilting object\sep $d$-abelian category\sep $d$-cluster tilting subcategory\sep $d$-representation finite algebra\sep $( d+2 )$-angulated category\sep functorially finite subcategory\sep Gorenstein algebra\sep higher homological algebra\sep quotient category\sep quotient functor
\end{keyword}



\begin{abstract}

Let ${\mathscr T}$ be a triangulated category.  If $T$ is a cluster tilting object and $I = [ \operatorname{add} T ]$ is the ideal of morphisms factoring through an object of $\operatorname{add} T$, then the quotient category ${\mathscr T} / I$ is abelian.  This is an important result of cluster theory, due to Keller--Reiten and K\"{o}nig--Zhu.  More general conditions which imply that ${\mathscr T} / I$ is abelian were determined by Grimeland and the first author.

Now let ${\mathscr T}$ be a suitable $( d+2 )$-angulated category for an integer $d \geqslant 1$.  If $T$ is a cluster tilting object in the sense of Oppermann--Thomas and $I = [ \operatorname{add} T ]$ is the ideal of morphisms factoring through an object of $\operatorname{add} T$, then we show that ${\mathscr T} / I$ is $d$-abelian.

The notions of $( d+2 )$-angulated and $d$-abelian categories are due to Geiss--Keller--Oppermann and Jasso.  They are higher homological generalisations of triangulated and abelian categories, which are recovered in the special case $d = 1$.  We actually show that if $\Gamma = \operatorname{End}_{ \mathscr T }T$ is the endomorphism algebra of $T$, then ${\mathscr T} / I$ is equivalent to a $d$-cluster tilting subcategory of $\operatorname{mod} \Gamma$ in the sense of Iyama; this implies that ${\mathscr T} / I$ is $d$-abelian.  Moreover, we show that $\Gamma$ is a $d$-Gorenstein algebra.

More general conditions which imply that ${\mathscr T} / I$ is $d$-abelian will also be determined, generalising the triangulated results of Grimeland and the first author.

\end{abstract} 

\maketitle

\setcounter{section}{-1}
\section{Introduction}
\label{sec:introduction}

It is an important result of cluster theory that certain quotients of triangulated categories are abelian.  This is stated in theorems by
Keller--Reiten, K\"{o}nig--Zhu, and in \cite[thm.\ 1]{GJ}, which will be generalised here to $( d+2 )$-angulated and $d$-abelian categories, the basic objects of higher homological algebra.

\subsection{Classic background}

Let $\cT$ be a $k$-linear $\Hom$-finite triangulated category over a field $k$, and let $T \in \cT$ be an object with endomorphism algebra $\Gamma = \End_{ \cT }T$.  Denote by $\cD$ the essential image of the functor $\cT( T,- ) : \cT \rightarrow \modf \Gamma$.

Recall the notion of cluster tilting objects (also known as maximal $1$-orthogonal objects), which was introduced by Iyama, see \cite[def.\ 3.1]{KZ}.  In our setup, $T$ is cluster tilting if it satisfies:
\[
  \add T
  = \{ X \in \cT \mid \cT( T,\Sigma X ) = 0 \}
  = \{ X \in \cT \mid \cT( X,\Sigma T ) = 0 \},
\]
where $\Sigma$ is the suspension functor of $\cT$.  When $T$ is cluster tilting, each $X \in \cT$ permits what might be called a $T$-presentation: A triangle $T_1 \rightarrow T_0 \rightarrow X \rightarrow \Sigma T_1$ with the $T_i$ in $\add T$.  See \cite[sec.\ 2.1, proposition]{KellerR} and \cite[lem.\ 3.2.1]{KZ}.  

It follows that a cluster tilting object $T$ satisfies the following conditions:
\begin{enumerate}
\setlength\itemsep{4pt}

  \myitem[(a)]  \label{condsGJ}
Let $T_1 \xrightarrow{f} T_0$ be a right minimal morphism in $\add T$.  Then each completion of $f$ to a triangle $T_1 \xrightarrow{f} T_0 \rightarrow X \xrightarrow{h} \Sigma T_1$ in $\cT$ satisfies $\cT( T,h ) = 0$.
  
  \myitem[(b)]  Let $X \in \cT$ be indecomposable with $\cT( T,X ) \neq 0$.  Then there exists a triangle $T_1 \rightarrow T_0 \rightarrow X \xrightarrow{h} \Sigma T_1$ in $\cT$ which satisfies $\cT( T,h ) = 0$.
\end{enumerate}
Note that (a) and (b) do not imply that $T$ is cluster tilting, see \cite[exa.\ 18]{GJ}.  We are interested in (a) and (b) because of the following result:

\begin{Theorem}
[{\cite[thm.\ 1]{GJ}}]
\label{thm:GJ}
Conditions (i) and (ii) below are equivalent.
\begin{enumerate}
\setlength\itemsep{4pt}

  \item  The functor $\cT( T,- ) : \cT \rightarrow \modf \Gamma$ is 
essentially surjective (in other words: $\cD = \modf \Gamma$), and it is full.

  \item  $T$ satisfies conditions (a) and (b).

\end{enumerate}
\end{Theorem}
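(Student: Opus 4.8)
The plan is to study the functor $H := \cT(T,-)\colon \cT \to \modf\Gamma$ through its restriction to $\add T$, recording three facts first. (P1) Since $H(T)=\Gamma$ and $H$ is additive, $H$ restricts to an equivalence $\add T \to \proj\Gamma$ which preserves right minimality of morphisms. (P2) For $T'\in\add T$ and any $Y\in\cT$ the map $H\colon \cT(T',Y)\to\Hom_\Gamma(H(T'),H(Y))$ is bijective (clear for $T'=T$, the general case following by additivity and projectivity of $H(T')$); thus $H$ is full and faithful whenever its source lies in $\add T$. (P3) Applying $H$ to a triangle $T_1\xrightarrow{f}T_0\xrightarrow{g}X\xrightarrow{h}\Sigma T_1$ with $T_i\in\add T$ yields an exact sequence $H(T_1)\xrightarrow{H(f)}H(T_0)\to H(X)\xrightarrow{H(h)}H(\Sigma T_1)$, so that $\cT(T,h)=0$ holds if and only if $H(T_1)\to H(T_0)\to H(X)\to 0$ is a projective presentation.

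For (ii)$\Rightarrow$(i), essential surjectivity is obtained thus: given $M\in\modf\Gamma$, choose a minimal projective presentation $P_1\xrightarrow{\varphi}P_0\to M\to 0$, realise $\varphi$ as $H(f)$ for a right minimal $f\colon T_1\to T_0$ in $\add T$ by (P1), complete $f$ to a triangle, and apply (a) to get $\cT(T,h)=0$; then (P3) gives $H(X)\cong\coker\varphi=M$. For fullness, given $\beta\colon H(X)\to H(Y)$, decompose $X$ into indecomposables and discard the summands annihilated by $H$; condition (b) then furnishes a triangle $T_1\xrightarrow{f}T_0\xrightarrow{g}X\xrightarrow{h}\Sigma T_1$ with $\cT(T,h)=0$, so $H(g)$ is surjective. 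Lift $\beta\circ H(g)$ through (P2) to $b_0\colon T_0\to Y$; since $H(b_0 f)=0$ and $T_1\in\add T$, (P2) forces $b_0 f=0$, so $b_0$ factors as $\gamma g$ through the triangle, and cancelling the epimorphism $H(g)$ from $H(\gamma)H(g)=\beta H(g)$ gives $H(\gamma)=\beta$.

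The substantial direction is (i)$\Rightarrow$(ii). Here the essential difficulty is that $H$ is full and dense but \emph{not} faithful, so module-level statements cannot be cancelled back in $\cT$. My main tool to circumvent this is a splitting principle: if $W\in\cT$ is chosen with $H(W)\cong N$ and \emph{no} indecomposable summand killed by $H$, then $\ker H\cap\End(W)\subseteq\rad\End(W)$ (a nonzero summand on which an element of $\ker H$ were invertible would have vanishing image, a contradiction), so by Hom-finiteness any endomorphism of $W$ that $H$ sends to an isomorphism is itself an isomorphism. For (b), take $X$ indecomposable with $H(X)\ne 0$, realise the presenting map of a minimal projective presentation of $H(X)$ as $f\colon T_1\to T_0$, and lift the cover $H(T_0)\twoheadrightarrow H(X)$ to $g\colon T_0\to X$ via (P2); then $gf=0$, so completing $f$ to $T_1\xrightarrow{f}T_0\xrightarrow{g'}C\xrightarrow{c}\Sigma T_1$ and comparing gives $\psi\colon C\to X$ with $\psi g'=g$ and $H(\psi)$ a split epimorphism onto $H(X)$. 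Lifting a section of $H(\psi)$ to $\phi\colon X\to C$ and applying the splitting principle to $\psi\phi\in\End(X)$ shows $\psi$ is a split epimorphism, so $X$ is a direct summand of $\operatorname{cone}(f)$; a short computation with (P2) shows moreover that $g'$ has zero component into the complement $C'$. The analogous argument for (a) presents the given right minimal $f$, produces $X\cong Z\oplus X'$ with $H(Z)=\coker H(f)$ and $H(X')=\im\cT(T,h)$, and reduces (a) to showing $X'=0$.

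In both cases the analysis reduces the claim to a single vanishing statement: that $\cT(T,\Sigma f)\colon\cT(T,\Sigma T_1)\to\cT(T,\Sigma T_0)$ is injective for right minimal $f$ (equivalently that the surplus summand $C'$, resp.\ $X'$, is zero, its image under $H$ being exactly $\ker\cT(T,\Sigma f)=\im\cT(T,h)$). I expect this to be the main obstacle, since it is the one point where neither (P2) nor the splitting principle applies directly: the groups $\cT(T,\Sigma T_i)$ are not projective and lie outside the reach of fullness on $\add T$. Resolving it should require using density and fullness more globally — realising a putative nonzero class of $\ker\cT(T,\Sigma f)$ as a morphism, resolving it by $\add T$ through iterated right approximations, and feeding the resulting maps into the octahedral axiom to contradict the right minimality of $f$.
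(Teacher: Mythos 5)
Your proof of (ii)$\Rightarrow$(i) is correct and follows the paper's own route for that direction: your (P1), (P2), (P3) are Lemmas \ref{add-inj}(i), \ref{slightlyfaithful}(i) and \ref{lem:resolution}, your essential-surjectivity argument is Proposition \ref{a-Ext-closed} specialised to $d=1$, and your fullness argument is a streamlined version of Proposition \ref{b-full} (you resolve only $X$ and lift $\beta\circ H(g)$ directly, instead of resolving $Y$ as well and invoking the comparison theorem; that simplification is harmless).

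The direction (i)$\Rightarrow$(ii) has a genuine gap, which you have located honestly but not closed, and the strategy you sketch for closing it points away from the fix. Write $H=\cT(T,-)$, let $f\colon T_1\to T_0$ realise a minimal projective presentation, complete to a triangle $T_1\xrightarrow{f}T_0\xrightarrow{g'}C\xrightarrow{c}\Sigma T_1$, and let $g\colon T_0\to Z$ realise the cover of $\coker H(f)$ (for (b), $Z=X$). Your comparison map $\psi\colon C\to Z$ with $\psi g'=g$ is shown by you to be a split \emph{epi}morphism, i.e.\ $Z$ is a summand of $C$; this leaves the surplus summand $C'$, and since $H(C')\cong \im H(c)=\Ker \cT(T,\Sigma f)$, the statement you still need, namely $H(C')=0$, is exactly condition (a) restated. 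So your argument reduces (a) to itself and (b) to (a); octahedra and iterated $\add T$-approximations are not the missing ingredient. What is missing is a minimality transfer, and it is the crux of the paper's Lemma \ref{diagram-lemma-split-mono} (used in Propositions \ref{aFulfilled} and \ref{bFulfilled}): since $f$ is right minimal, so is $\Sigma f$; because $(\Sigma f)c=0$ and a right minimal morphism vanishes on no nonzero direct summand of its source, $c\in\rad_{\cT}$, equivalently $g'$ is left minimal. This upgrades your $\psi$ to a split \emph{mono}morphism: fullness plus faithfulness on $\add T$ produce $w\colon Z\to C$ with $g'=wg$ (factor $H(g')$ through the cokernel $H(g)$, lift the factorisation, and cancel using Lemma \ref{slightlyfaithful}(i) on the source $T_0\in\add T$), whence $g'=(w\psi)g'$, and left minimality of $g'$ makes $w\psi$ invertible. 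That is the useful direction. For (b), the nonzero cone $C$ is then a summand of the indecomposable $X$, so $C\cong X$, $H(g')$ is surjective and $H(c)=0$. For (a), the split monomorphism alone suffices: any $a\colon T\to C$ gives $\psi a=gu=\psi g'u$ for some $u\colon T\to T_0$, so $a=g'u$ and $ca=cg'u=0$, i.e.\ $\cT(T,c)=0$. Alternatively, from exactly where you stand the same idea finishes in three lines: the projection $\pi'\colon C\to C'$ satisfies $\pi'g'=0$, hence factors as $\pi'=qc$; then $c$ restricted to $C'$ is a split monomorphism $C'\to\Sigma T_1$, so $\Sigma^{-1}C'$ is a direct summand of $T_1$ on which $f$ vanishes (because $(\Sigma f)c=0$), contradicting right minimality of $f$ unless $C'=0$.
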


If (i) holds, then $\cT( T,- ) : \cT \rightarrow \modf \Gamma$ induces an equivalence of categories
\[
  \cT/I \xrightarrow{ \sim } \modf \Gamma
\]  
where $I$ is the ideal of morphisms $f$ such that $\cT( T,f ) = 0$.  In other words, the triangulated category $\cT$ has an abelian quotient $\cT / I$.

If $T$ is cluster tilting, then more is true.  The following is a combination of \cite[sec.\ 2.1]{KellerR} and \cite[cors.\ 4.4 and 4.5]{KZ}:

\begin{Theorem}
[Keller--Reiten and K\"{o}nig--Zhu]
\label{thm:classic}
Assume that $T$ is cluster tilting.  Then:
\begin{enumerate}
\setlength\itemsep{4pt}

  \item  The functor $\cT( T,- ) : \cT \rightarrow \modf \Gamma$ is 
essentially surjective (in other words: $\cD = \modf \Gamma$).
  
  \item  The functor $\cT( T,- )$ induces an equivalence of categories
\[  
  \cT / [ \add \Sigma T ] \xrightarrow{ \sim } \modf \Gamma.
\]

  \item $\Gamma$ is a $1$-Gorenstein algebra, that is, each injective module has projective dimension $\leqslant 1$, and each projective module has injective dimension $\leqslant 1$.

  \item  If the global dimension of $\Gamma$ is finite, then it is at most $1$.

\end{enumerate}
\end{Theorem}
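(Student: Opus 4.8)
The plan is to push the functor $\cT(T,-)$ through $T$-presentations, exploiting both defining vanishing conditions of cluster tilting. Throughout I use that $\cT(T,-)$ restricts to an equivalence $\add T\xrightarrow{\sim}\proj\Gamma$ (Yoneda), so that for $T_0\in\add T$ and \emph{any} $Y$ the map $\cT(T_0,Y)\to\Hom_\Gamma(\cT(T,T_0),\cT(T,Y))$ is an isomorphism, and the indecomposable projective right $\Gamma$-modules are the $\cT(T,T_i)$. For (i), take $M\in\modf\Gamma$ and a projective presentation $\cT(T,T_1)\xrightarrow{\cT(T,f)}\cT(T,T_0)\to M\to 0$ with $f\colon T_1\to T_0$ in $\add T$. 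Complete $f$ to a triangle $T_1\xrightarrow{f}T_0\to X\xrightarrow{h}\Sigma T_1$ and apply $\cT(T,-)$. Since $T_1\in\add T$, cluster tilting gives $\cT(T,\Sigma T_1)=0$, so the long exact sequence collapses to $\cT(T,T_1)\xrightarrow{\cT(T,f)}\cT(T,T_0)\to\cT(T,X)\to 0$; hence $\cT(T,X)\cong\coker\cT(T,f)\cong M$, proving density, i.e.\ $\cD=\modf\Gamma$.

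For (ii) I first compute $\Ker\cT(T,-)$ on morphisms. If $g$ factors through $\add\Sigma T$ then $\cT(T,g)=0$ because $\cT(T,\Sigma T')=0$ for $T'\in\add T$, so $[\add\Sigma T]\subseteq\Ker\cT(T,-)$. Conversely, given $g\colon X\to Y$ with $\cT(T,g)=0$, take a $T$-presentation $T_1\xrightarrow{f}T_0\xrightarrow{p}X\xrightarrow{h}\Sigma T_1$; then $\cT(T,gp)=0$, and faithfulness of $\cT(T,-)$ on maps out of $\add T$ forces $gp=0$, so $g$ factors through $h$, i.e.\ through $\add\Sigma T$. This gives faithfulness of the induced functor on $\cT/[\add\Sigma T]$. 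For fullness, given a $\Gamma$-map $\varphi\colon\cT(T,X)\to\cT(T,Y)$, realise $\varphi\circ\cT(T,p)$ as $\cT(T,a)$ for some $a\colon T_0\to Y$ (projectivity of $\cT(T,T_0)$); since $\cT(T,af)=\varphi\circ\cT(T,pf)=0$ we get $af=0$, hence $a=gp$ for some $g\colon X\to Y$, and $\cT(T,g)=\varphi$ because $\cT(T,p)$ is epic. Combined with density this yields $\cT/[\add\Sigma T]\xrightarrow{\sim}\modf\Gamma$.

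For (iii) --- the crux --- I would use Serre duality (e.g.\ $\mathbb{S}=\Sigma^2$ in the $2$-Calabi--Yau setting relevant to Keller--Reiten), giving $D\cT(Z,T)\cong\cT(T,\Sigma^2 Z)$ as right $\Gamma$-modules, so that the indecomposable injectives are exactly the $\cT(T,\Sigma^2 T_i)$. Applying (i) to a $T$-presentation $W_1\xrightarrow{f}W_0\to\Sigma^2 T_i\to\Sigma W_1$, the resulting projective presentation is in fact a resolution
\[
0\to\cT(T,W_1)\to\cT(T,W_0)\to\cT(T,\Sigma^2 T_i)\to 0,
\]
because $\Ker\cT(T,f)$ is a quotient of $\cT(T,\Sigma^{-1}\Sigma^2 T_i)=\cT(T,\Sigma T_i)=0$, using $T_i\in\add T$ and cluster tilting. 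Hence every injective has $\pdim\le 1$. Running the same argument in $\cT\op$ (where $T$ is again cluster tilting, with endomorphism algebra $\Gamma\op$) and dualising by $D$ shows every projective has $\idim\le 1$, so $\Gamma$ is $1$-Gorenstein. The main obstacle is precisely this identification of the injective $\Gamma$-modules inside the essential image of $\cT(T,-)$: it is what turns the two-term $T$-presentations into genuine length-one (co)resolutions, and without a Serre functor it must be replaced by a direct analysis built from the symmetric condition $\add T=\{X\mid\cT(X,\Sigma T)=0\}$ and Hom-finiteness. Finally, (iv) is formal: if $\gldim\Gamma<\infty$, then for the finite-dimensional algebra $\Gamma$ one has $\gldim\Gamma=\idim\Gamma_\Gamma$, which (iii) bounds by $1$, whence $\gldim\Gamma\le 1$.
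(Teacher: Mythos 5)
Your proof is correct, and it is essentially the paper's own approach: the paper does not prove Theorem~\ref{thm:classic} directly (it is quoted from Keller--Reiten and K\"{o}nig--Zhu), but its proof of the generalisation Theorem~\ref{thm:AB} specialises at $d=1$ to exactly your argument --- lift a projective presentation to $\add T$, complete to a triangle and use $\cT(T,\Sigma T)=0$ to get essential surjectivity and the identification of the kernel ideal with $[\add\Sigma T]$, identify the injectives as $\cT(T,ST_i)$ via Serre duality and resolve them by $T$-presentations to get the Gorenstein property, pass to $\cT^{\op}$ for the injective dimension of projectives, and deduce (iv) from the standard fact that a finite global dimension equals $\idim\Gamma_\Gamma$ (the paper uses the dual form $\pdim(D\Gamma)_\Gamma$). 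The only cosmetic differences are that the paper routes fullness through a comparison-of-resolutions argument and the conditions machinery of Theorem~\ref{thm:C}, and resolves $D\Gamma$ all at once rather than one indecomposable injective at a time.
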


The purpose of this paper is to generalise Theorems \ref{thm:GJ} and \ref{thm:classic} to $( d+2 )$-angulated categories.

\subsection{Primer on $( d+2 )$-angulated and $d$-abelian categories}

The notions of $( d+2 )$-angulated and $d$-abelian categories were introduced by Geiss--Keller--Oppermann in \cite[def.\ 2.1]{GKO} and Jasso in \cite[def.\ 3.1]{Jasso}.  They are the basic objects of higher homological algebra.
For $d = 1$ they specialise to triangulated and abelian categories.  For general values of $d$, they are defined in terms of $( d+2 )$-angles, $d$-kernels, and $d$-cokernels; these are longer complexes with properties resembling those of triangles, kernels, and cokernels.

Many examples of $( d+2 )$-angulated and $d$-abelian categories are known, see  for instance \cite{GKO}, \cite{Jasso}, \cite{OppermannT}, and Section \ref{sec:examples}, and there are strong links to higher dimensional combinatorics.

The notion of cluster tilting object can be generalised to $( d+2 )$-angulated categories:

\begin{Definition}
[{Cluster tilting objects in the sense of  \cite[def.\ 5.3]{OppermannT}}]
\label{def:OT}
An object $T$ of a $( d+2 )$-angulated category $\cT$ with $d$-suspension functor $\Sigma^d$ is called {\em cluster tilting in the sense of Oppermann--Thomas} if:
\begin{enumerate}
\setlength\itemsep{4pt}

  \item  \(\HomT{\Sigma^d T}=0.\)
  
  \item  Each \(X\in \cT\) occurs in a $( d+2 )$-angle
\[
  T_d \rightarrow T_{d-1} \rightarrow \cdots \rightarrow T_{1} \rightarrow T_0 \xrightarrow{f_0} X \xrightarrow{h} \Sigma^dT_d
\]
with \(T_i\in \add T\) for $0 \leqslant i \leqslant d$.

\end{enumerate}
\end{Definition}

\subsection{This paper}

This paper generalises Theorems \ref{thm:GJ} and \ref{thm:classic} to $( d+2 )$-angulated categories.  We first fix the notation.  Concrete examples of the following setup are provided in Section \ref{sec:examples}.

\begin{Setup}
\label{set:blanket0}
The rest of the paper assumes the following setup:  $k$ is an algebraically closed field, $d \geqslant 1$ is an integer, $\cT$ is a $k$-linear $\Hom$-finite $( d+2 )$-angulated category with split idempotents.  The $d$-suspension functor of $\cT$ is denoted by $\Sigma^d$.  We assume that $\cT$ has a Serre functor $S$, that is, an autoequivalence for which there are natural equivalences $D\cT( X,Y ) \cong \cT( Y,SX )$, where $D( - ) = \Hom_k( -,k )$ is the $k$-linear duality functor.

We let $T \in \cT$ be an object with endomorphism algebra $\Gamma=\End_{ \cT } T$.  By $\cD$ we denote the essential image of the functor $\cT( T,- ) : \cT \rightarrow \modf \Gamma$, where $\modf \Gamma$ is the category of finite dimensional right $\Gamma$-modules.

Observe that since $\cT$ is $k$-linear and $\Hom$-finite, it is a Krull--Schmidt category. 
\hfill $\Box$
\end{Setup}

Our first main result is a higher homological generalisation of Theorem \ref{thm:GJ}, which can be recovered by setting $d = 1$.  Conditions \ref{condGhostWeak}, \ref{condGhostDualWeak}, \ref{condGhost}, \ref{condGhostDual}, and \ref{condRes} in the theorem are higher homological versions of conditions (a) and (b) on page \pageref{condsGJ}.  We do not state them here, but refer to Definition \ref{def:conditions}.

\begin{Theorem}
\label{thm:C}
Conditions (i), (ii), and (iii) below are equivalent.
\begin{enumerate}
\setlength\itemsep{4pt}

  \item  $\cD$ is a $d$-cluster tilting subcategory of $\modf \Gamma$ (see Definition \ref{def:d-CT} below) and the functor $\cT( T,- ) : \cT \rightarrow \modf \Gamma$ is full.

  \item  $T$ satisfies conditions \ref{condGhostWeak}, \ref{condGhostDualWeak}, and \ref{condRes} in Definition \ref{def:conditions}.

  \item  $T$ satisfies conditions \ref{condGhost}, \ref{condGhostDual}, and \ref{condRes} in Definition \ref{def:conditions}.

\end{enumerate}
\end{Theorem}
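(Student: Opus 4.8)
The plan is to prove Theorem \ref{thm:C} by establishing the cycle of implications $(i) \Rightarrow (ii) \Rightarrow (iii) \Rightarrow (i)$, although the equivalence $(ii) \Leftrightarrow (iii)$ will presumably be easiest to handle on its own. The overall strategy mirrors the proof of the $d=1$ case in Theorem \ref{thm:GJ}, but every kernel/cokernel argument must be upgraded to its $d$-analogue. First I would unwind Definition \ref{def:conditions} and Definition \ref{def:d-CT}: the ``weak'' conditions \ref{condGhostWeak} and \ref{condGhostDualWeak} should be the higher analogues of condition (a) (vanishing of $\cT(T,h)$ along the distinguished connecting morphism $h$ of a $T$-presentation), while \ref{condGhost} and \ref{condGhostDual} should be their pointwise-on-indecomposables counterparts, analogous to condition (b); condition \ref{condRes} will encode the resolution/coresolution data needed to land inside a genuine $d$-cluster tilting subcategory.

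For $(ii) \Leftrightarrow (iii)$, I expect the Krull--Schmidt property of $\cT$ (noted in Setup \ref{set:blanket0}) to do the work: a condition phrased for all right minimal morphisms in $\add T$ is equivalent to the same condition tested on indecomposable summands, because any $T$-presentation decomposes compatibly with direct-sum decompositions and the functor $\cT(T,-)$ is additive. The ghost-morphism vanishing $\cT(T,h)=0$ should pass between the two formulations by completing morphisms to $(d+2)$-angles and using that minimal morphisms between objects in $\add T$ are detected summand-by-summand. I would also use the Serre functor $S$ to move between the ``ghost'' condition \ref{condGhost} and its dual \ref{condGhostDual}, since Serre duality exchanges the roles of $\cT(T,-)$ and $\cT(-,\Sigma^d T)$ up to the autoequivalence $S$.

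The substantive implications are $(ii) \Rightarrow (i)$ and $(i) \Rightarrow (ii)$. For $(ii) \Rightarrow (i)$, I would show fullness of $\cT(T,-)$ first: given a module homomorphism $\cT(T,X) \to \cT(T,Y)$, I lift it along $T$-presentations of $X$ and $Y$ using the vanishing supplied by the ghost conditions to kill the obstruction terms living in $\cT(T, \Sigma^d T_d)$ and its shifts. Then, to see that $\cD$ is $d$-cluster tilting in $\modf\Gamma$, I must produce, for each $\Gamma$-module in $\cD$, the required $d$-fold $\Ext$-orthogonality and the functorial finiteness; here condition \ref{condRes} should furnish the length-$d$ resolutions by objects of $\cD$, and I would transport the $(d+2)$-angle $T_d \to \cdots \to T_0 \to X \to \Sigma^d T_d$ through $\cT(T,-)$ to obtain an exact sequence realizing the module as a $d$-cokernel, checking that the image sequence is exact precisely because the connecting ghost maps vanish. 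For the converse $(i) \Rightarrow (ii)$, I would read off the conditions from the defining properties of a $d$-cluster tilting subcategory: the higher $\Ext$-vanishing in $\cD$ forces the ghost morphisms to vanish, and the existence of $d$-resolutions inside $\cD$ yields condition \ref{condRes}.

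The main obstacle I anticipate is the exactness bookkeeping in $(ii) \Rightarrow (i)$: verifying that applying $\cT(T,-)$ to a $T$-presentation yields an exact sequence in $\modf\Gamma$ requires controlling the connecting homomorphisms at every one of the $d$ intermediate stages, not just at a single kernel/cokernel as in the classical $d=1$ argument. The long $(d+2)$-angle induces a long sequence of $\Gamma$-modules, and its exactness is equivalent to the simultaneous vanishing of a family of ghost-type maps; disentangling which of conditions \ref{condGhostWeak}, \ref{condGhostDualWeak}, \ref{condRes} controls which stage, and ensuring the induced complex is exact in the middle (so that $\cT(T,X)$ really is the $d$-cokernel), is where the higher-dimensional combinatorics genuinely complicates the triangulated proof and will demand the most care.
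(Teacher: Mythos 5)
Your architecture has a genuine gap, and it sits exactly at the step you dismiss as easy. The distinction between conditions \ref{condGhostWeak}, \ref{condGhostDualWeak} and their strong versions \ref{condGhost}, \ref{condGhostDual} is not ``all right minimal morphisms'' versus ``indecomposable summands'': all four conditions start from the same minimal projective presentation (resp.\ injective copresentation), and the only difference is \emph{where} in the completed $(d+2)$-angle the ghost vanishing is required. The weak conditions ask for $\cT(T,h_i)=0$ for \emph{some} $1\leqslant i\leqslant d+1$, while \ref{condGhost} demands vanishing at the specific morphism $h_d$ (and \ref{condGhostDual} at $h_2$). Hence the trivial implication is (iii)$\Rightarrow$(ii), whereas your proposed cycle $(i)\Rightarrow(ii)\Rightarrow(iii)\Rightarrow(i)$ needs the converse (ii)$\Rightarrow$(iii): that vanishing \emph{somewhere} can be upgraded to vanishing at the prescribed position. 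No Krull--Schmidt or additivity argument gives this; the decomposition-into-summands issue you invoke is simply not the distinction between (ii) and (iii). In the paper, (ii)$\Rightarrow$(iii) is obtained only as the composite (ii)$\Rightarrow$(i)$\Rightarrow$(iii), and the half (i)$\Rightarrow$(iii) is one of the two substantial parts of the whole proof: it completes $f$ to a $(d+2)$-angle with middle morphisms in $\rad_{\cT}$ (so that they are left minimal), uses the structure of minimal projective resolutions inside a $d$-cluster tilting subcategory (Lemmas \ref{non-proj-syzygy}--\ref{projres-d-cokernel}), and hinges on a diagram lemma (Lemma \ref{diagram-lemma-split-mono}) producing a split monomorphism from $X_d$ into an object realising $M$ in $\cD$. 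Nothing in your proposal replaces this machinery, so the cycle does not close.

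Your sketch of (ii)$\Rightarrow$(i) is in the right spirit and close to the paper's: fullness by lifting a module homomorphism along the $(d+2)$-angles supplied by \ref{condRes} (Proposition \ref{b-full}), and $d$-rigidity of $\cD$ by applying $\cT(T,-)$ to those angles (Proposition \ref{b-d-rigid}). But being $d$-cluster tilting also requires the \emph{closure} direction, namely that every $M\in\modf\Gamma$ with $\Ext^i_\Gamma(\cD,M)=0$ (resp.\ $\Ext^i_\Gamma(M,\cD)=0$) for $1\leqslant i\leqslant d-1$ already lies in $\cD$; this is precisely where \ref{condGhostWeak} and \ref{condGhostDualWeak} enter (Proposition \ref{a-Ext-closed}), via the splitting Lemma \ref{split-ext} and closure of $\cD$ under direct summands (Lemma \ref{b-direct-summands}, which needs an idempotent-lifting argument). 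It also requires functorial finiteness of $\cD$, which the paper deduces from the existence of weak kernels and weak cokernels in $\cT$ (Proposition \ref{funcfin}). Your proposal gestures at ``$\Ext$-orthogonality and functorial finiteness'' only for objects already in $\cD$ and never addresses the closure direction, so even this half would need substantial completion.
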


If (i) holds, then $\cD$ is a $d$-cluster tilting subcategory of $\modf \Gamma$, hence $d$-abelian by \cite[thm.\ 3.16]{Jasso}.  Moreover, $\cT( T,- ) : \cT \rightarrow \modf \Gamma$ induces an equivalence of categories
\[
  \cT / I \xrightarrow{ \sim } \cD,
\]
where $I$ is the ideal of morphisms $f$ such that $\cT( T,f ) = 0$.  In other words, the $( d+2 )$-angulated category $\cT$ has a $d$-abelian quotient $\cT / I$.

Let us remark that the implication (iii)$\Rightarrow$(ii) in the theorem is clear by Definition \ref{def:conditions}, since conditions \ref{condGhost}, \ref{condGhostDual} are explicitly stronger versions of conditions \ref{condGhostWeak}, \ref{condGhostDualWeak}.  The implications (ii)$\Rightarrow$(i) and (i)$\Rightarrow$(iii) will be proved in Sections \ref{sec:ii_to_i} and \ref{sec:i_to_iii}, respectively.

Our second main result is a higher homological generalisation of Theorem \ref{thm:classic}, which can be recovered by setting $d = 1$.  Note that the following was obtained in a special case in the first part of \cite[thm.\ 5.6]{OppermannT}.

\begin{Theorem}
\label{thm:AB}
Assume that $T$ is cluster tilting in the sense of Oppermann--Thomas, see Definition \ref{def:OT}.  Then:
\begin{enumerate}
\setlength\itemsep{4pt}

  \item  $\cD$ is a $d$-cluster tilting subcategory of $\modf \Gamma$.
  
  \item  The functor $\cT( T,- )$ induces an equivalence of categories
\[  
  \cT / [ \add \Sigma^d T ] \xrightarrow{ \sim } \cD.
\]

  \item $\Gamma$ is a $d$-Gorenstein algebra, that is, each injective module has projective dimension $\leqslant d$, and each projective module has injective dimension $\leqslant d$.

  \item  If the global dimension of $\Gamma$ is finite, then it is at most $d$.

\end{enumerate}
\end{Theorem}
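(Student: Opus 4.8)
The plan is to derive parts (i) and (ii) from Theorem~\ref{thm:C}, and then to read off parts (iii) and (iv) from the resulting $d$-cluster tilting structure by means of Serre duality. First I would check that a cluster tilting object in the sense of Oppermann--Thomas satisfies conditions \ref{condGhost}, \ref{condGhostDual} and \ref{condRes} of Definition~\ref{def:conditions}, so that the implication (iii)$\Rightarrow$(i) of Theorem~\ref{thm:C} applies. The resolution condition \ref{condRes} and the covariant ghost condition \ref{condGhost} are immediate from Definition~\ref{def:OT}: the required $T$-presentations are provided by part (ii) of that definition, and the connecting morphism $h\colon X \to \Sigma^d T_d$ of such a presentation is a ghost because $\cT(T,h)$ factors through $\cT(T,\Sigma^d T_d)=0$. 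The delicate point is the contravariant condition \ref{condGhostDual}, for which I would use the Serre functor to produce, for every $X \in \cT$, a co-presentation $\Sigma^{-d}T^d \to X \to T^0 \to \cdots \to T^d$ with all $T^i \in \add T$ and with the relevant connecting morphism a dual ghost; equivalently, one verifies that $T$ is cluster tilting in $\cT^{\opp}$, which again satisfies Setup~\ref{set:blanket0}, now with $d$-suspension $\Sigma^{-d}$ and Serre functor $S^{-1}$. Granting these conditions, Theorem~\ref{thm:C} yields part (i) together with the equivalence $\cT/I \xrightarrow{\sim} \cD$, where $I$ is the ideal of morphisms $f$ with $\cT(T,f)=0$.

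To obtain part (ii) it then suffices to identify $I$ with $[\add \Sigma^d T]$. The inclusion $[\add \Sigma^d T]\subseteq I$ is clear, since any $f$ factoring through an object of $\add \Sigma^d T$ has $\cT(T,f)$ factoring through $\cT(T,\Sigma^d T')=0$ for some $T'\in\add T$. For the reverse inclusion, given $f\colon X \to Y$ with $\cT(T,f)=0$, I would take a $T$-presentation with its right $\add T$-approximation $f_0\colon T_0 \to X$ and connecting morphism $h\colon X \to \Sigma^d T_d$. Since $\cT(T,-)$ is faithful on morphisms out of $\add T$, the vanishing of $\cT(T,ff_0)$ forces $ff_0=0$; applying $\cT(-,Y)$ to the presentation and using exactness at $\cT(X,Y)$ then factors $f$ through $h$, so that $f \in [\add \Sigma^d T]$.

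For part (iii) I would first identify the injective $\Gamma$-modules via Serre duality: the natural isomorphism $D\cT(T',T)\cong\cT(T,ST')$ of right $\Gamma$-modules shows that the indecomposable injectives are exactly the modules $\cT(T,ST')$ with $T'\in\add T$. Applying $\cT(T,-)$ to a $T$-presentation $T_d \to \cdots \to T_0 \to ST' \to \Sigma^d T_d$ of $ST'$ gives a complex of projective $\Gamma$-modules $\cT(T,T_d)\to\cdots\to\cT(T,T_0)\to\cT(T,ST')\to 0$; it is exact on the right because $\cT(T,\Sigma^d T_d)=0$, and exact on the left because $\cT(T,\Sigma^{-d}ST')\cong D\cT(T',\Sigma^d T)=0$, both vanishings following from $\cT(T,\Sigma^d T)=0$ and Serre duality. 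This is a projective resolution of length $\leqslant d$, so every injective has projective dimension $\leqslant d$. The dual assertion, that every projective has injective dimension $\leqslant d$, is the same statement for $\Gamma^{\opp}=\End_{\cT^{\opp}}T$ and follows by repeating the argument in $\cT^{\opp}$; hence $\Gamma$ is $d$-Gorenstein. Part (iv) is then a standard consequence: if $\gldim\Gamma<\infty$ then $\gldim\Gamma=\pd_\Gamma(D\Gamma)$, and since $D\Gamma$ is injective the $d$-Gorenstein property bounds this by $d$.

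The step I expect to be the main obstacle is the construction of the co-presentations underlying condition \ref{condGhostDual}. Definition~\ref{def:OT} only supplies one-sided ($T$-presentation) data, whereas Theorem~\ref{thm:C} requires approximations on both sides, and it is precisely here that the Serre functor must be brought to bear. Some care will be needed not merely to produce the co-presentations but to verify that their connecting morphisms are genuine dual ghosts, which is what makes them usable as the contravariant input to Theorem~\ref{thm:C}.
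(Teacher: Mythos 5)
Your parts (ii), (iii) and (iv) are essentially the paper's own arguments: the identification $I=[\add\Sigma^dT]$ via Lemma \ref{slightlyfaithful}(i) plus factorisation through the connecting morphism, the projective resolution of $D\Gamma$ obtained by applying $\cT(T,-)$ to an Oppermann--Thomas presentation of $ST$ and killing both ends using $\cT(T,\Sigma^dT)=0$ and Serre duality, the passage to $\cT^{\opp}$ for the other half of the Gorenstein property, and the reduction of (iv) to (iii). The gap is in part (i). You propose to verify the \emph{strong} conditions \ref{condGhost} and \ref{condGhostDual} and invoke the implication (iii)$\Rightarrow$(i) of Theorem \ref{thm:C}, claiming \ref{condGhost} is immediate because the connecting morphism $h\colon X\to\Sigma^dT_d$ of a presentation from Definition \ref{def:OT}(ii) is a ghost. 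But that is not what \ref{condGhost} asks for. Condition \ref{condGhost} concerns a completion of a given morphism $f\colon T_1\to T_0$ (realising a minimal projective presentation of $M$) to a $(d+2)$-angle
\[
  T_1 \xrightarrow{f} T_0 \xrightarrow{h_{d+1}} X_d \xrightarrow{h_d} X_{d-1} \rightarrow \cdots \rightarrow X_1 \xrightarrow{h_1} \Sigma^d T_1,
\]
and it demands $\cT(T,h_d)=0$ for the \emph{interior} morphism $h_d\colon X_d\to X_{d-1}$; only the terminal morphism $h_1$ is automatically a ghost from $\cT(T,\Sigma^dT)=0$. Likewise \ref{condGhostDual} demands $\cT(T,h_2)=0$ for an interior morphism, which your $\cT^{\opp}$ argument also does not produce. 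These strong conditions are genuinely non-trivial: in the paper they are obtained only a posteriori, via the implication (i)$\Rightarrow$(iii) of Theorem \ref{thm:C} (Proposition \ref{aFulfilled}), whose proof needs minimal completions as in \cite[lem.\ 5.18(2)]{OppermannT} and the split-monomorphism Lemma \ref{diagram-lemma-split-mono}. Proving them directly from Definition \ref{def:OT} would amount to redoing that work.

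The repair is easy, and it is exactly the paper's route (Lemma \ref{lem:OT_connection}): your ingredients establish the \emph{weak} conditions, after which the implication (ii)$\Rightarrow$(i) of Theorem \ref{thm:C} applies. Indeed, completing $f$ arbitrarily, the terminal morphism $h_1$ lands in $\Sigma^dT_1$, so $\cT(T,h_1)=0$ gives \ref{condGhostWeak}; dually, for any completion of $g\colon ST_1\to ST_0$ the initial morphism has source $\Sigma^{-d}ST_0$ with $\cT(T,\Sigma^{-d}ST_0)\cong D\cT(T_0,\Sigma^dT)=0$ by Serre duality, giving \ref{condGhostDualWeak} --- this is the precise (and only) place where the Serre functor enters. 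In particular, no co-presentations of arbitrary objects are needed: conditions \ref{condGhostWeak} and \ref{condGhostDualWeak} concern completions of morphisms, which exist by the $(d+2)$-angulated axioms, so the ``main obstacle'' you anticipate dissolves once the conditions are read correctly.
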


From Theorem \ref{thm:AB} follows the next result, which was obtained in a special case in the second part of \cite[thm.\ 5.6]{OppermannT}.  The notion of (weakly) $d$-representation finite algebras was defined in  \cite[def.\ 2]{IO}.

\begin{Corollary}
\label{cor:repfin}
Assume that $T$ is cluster tilting in the sense of Oppermann--Thomas, see Definition \ref{def:OT}, and that $\cT$ has finitely many indecomposable objects up to isomorphism.  Then:
\begin{enumerate}
\setlength\itemsep{4pt}

  \item  $\Gamma$ is weakly $d$-representation finite.
  
  \item  If $\Gamma$ has finite global dimension, then it is $d$-representation finite.

\end{enumerate}
\end{Corollary}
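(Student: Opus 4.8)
The plan is to read the corollary off from Theorem \ref{thm:AB} together with one elementary finiteness observation. Recall from \cite[def.\ 2]{IO} that $\Gamma$ is \emph{weakly $d$-representation finite} exactly when $\modf \Gamma$ possesses a $d$-cluster tilting module $M$, i.e.\ a module with $\add M$ a $d$-cluster tilting subcategory, and that $\Gamma$ is \emph{$d$-representation finite} when moreover $\gldim \Gamma \leqslant d$. Thus (i) amounts to producing a single $d$-cluster tilting module, and (ii) to controlling the global dimension.

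For (i) I would argue as follows. By Theorem \ref{thm:AB}(i) the essential image $\cD$ is already a $d$-cluster tilting subcategory of $\modf \Gamma$; it remains only to see that $\cD = \add M$ for one finite-dimensional module $M$. Let $X_1, \dots, X_n$ be representatives of the isomorphism classes of indecomposable objects of $\cT$, finitely many by hypothesis, and put $M = \bigoplus_{i=1}^n \cT( T,X_i )$, a finite-dimensional $\Gamma$-module by the $\Hom$-finiteness in Setup \ref{set:blanket0}. Since $\cT$ is Krull--Schmidt, every $Y \in \cT$ is a finite direct sum of the $X_i$, so $\cT( T,Y )$ is a direct sum of copies of the $\cT( T,X_i )$; hence the essential image $\cD$ equals $\add M$. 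As $\cD$ is $d$-cluster tilting, $M$ is a $d$-cluster tilting module and $\Gamma$ is weakly $d$-representation finite.

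For (ii), assuming $\gldim \Gamma < \infty$, Theorem \ref{thm:AB}(iv) gives $\gldim \Gamma \leqslant d$; together with the module $M$ from (i) this is precisely the definition of $\Gamma$ being $d$-representation finite.

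The only point requiring real care is the identification $\cD = \add M$: one must make sure that finitely many indecomposables in $\cT$ genuinely collapse to a single $d$-cluster tilting \emph{module}, using that $\cT( T,- )$ is additive and $\Hom$-finite, or equivalently that the additive equivalence $\cT/[\add \Sigma^d T] \xrightarrow{\sim} \cD$ of Theorem \ref{thm:AB}(ii) transports the finiteness of indecomposables from $\cT$ to $\cD$. Beyond this bookkeeping the corollary introduces no new difficulty; it is essentially a translation of Theorem \ref{thm:AB} into the language of \cite{IO}.
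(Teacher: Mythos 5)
Your proposal is correct and takes essentially the same route as the paper: both parts are read off from Theorem \ref{thm:AB}(i) and (iv), the only difference being that the paper isolates the finiteness bookkeeping in Lemma \ref{lem:weakly_representation_finite} (every indecomposable of $\cD$ is $\cT(T,X)$ for an indecomposable $X \in \cT$) whereas you build $M = \bigoplus_i \cT(T,X_i)$ directly from the indecomposables of $\cT$. One small caveat: your Krull--Schmidt argument only gives the inclusion $\cD \subseteq \add M$, and the reverse inclusion $\add M \subseteq \cD$ needs that $\cD$ is closed under direct summands --- which is automatic since $\cD$ is $d$-cluster tilting (by the $\Ext$-vanishing characterisation, or by Lemma \ref{b-direct-summands}) --- an implicit step the paper's own proof also leaves unsaid.
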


The paper is organised as follows:  Section \ref{sec:lemmas1} provides some lemmas on $d$-cluster tilting subcategories of $\modf \Gamma$.  Section \ref{sec:lemmas2} provides some lemmas on the functor $\cT( T,- )$.  Section \ref{sec:conditions} states conditions \ref{condGhostWeak}, \ref{condGhostDualWeak}, \ref{condRes}, \ref{condGhost}, and \ref{condGhostDual}, and provides a connection to cluster tilting in the sense of Oppermann--Thomas.  Sections \ref{sec:ii_to_i} and \ref{sec:i_to_iii} prove the implications (ii)$\Rightarrow$(i) and (i)$\Rightarrow$(iii) in Theorem \ref{thm:C}.  Section \ref{sec:proofs} proves Theorem \ref{thm:AB} and Corollary \ref{cor:repfin}.  Section \ref{sec:examples} provides two classes of examples, the first of which shows how Theorem \ref{thm:AB} and Corollary \ref{cor:repfin} imply \cite[thm.\ 5.6]{OppermannT}.

\section{Lemmas on $d$-cluster tilting subcategories of $\mod \Gamma $}
\label{sec:lemmas1}

The results of this section do not require $\Gamma$ to arise as in Setup \ref{set:blanket0}; they are valid for any finite dimensional $k$-algebra.

\begin{Definition}
[{$d$-cluster tilting subcategories, \cite[def.\ 1.1]{Iyama}}]
\label{def:d-CT}
Let $\cX \subseteq \modf \Gamma$ be a full subcategory.
\begin{enumerate}
\setlength\itemsep{4pt}

  \item  $\cX$ is {\em weakly $d$-cluster tilting} if
\begin{align*}
  \cX = & \: \{ X \in \modf \Gamma \mid \mbox{$\Ext_{ \Gamma }^i( X,\cX ) = 0$ for $1 \leqslant i \leqslant d-1$} \} \\
      = & \: \{ X \in \modf \Gamma \mid \mbox{$\Ext_{ \Gamma }^i( \cX,X ) = 0$ for $1 \leqslant i \leqslant d-1$} \}.
\end{align*}

  \item  $\cX$ is {\it $d$-cluster tilting} if it is weakly $d$-cluster tilting and functorially finite in $\modf \Gamma$.  

\end{enumerate}

A module $X \in \modf \Gamma$ is called {\em $d$-cluster tilting} if $\add X$ is a $d$-cluster tilting subcategory.
\end{Definition}

\begin{Setup}
From now on, $\cX \subseteq \modf \Gamma$ is a $d$-cluster tilting subcategory.  Note that $\cX$ is a $d$-abelian category by \cite[thm.\ 3.16]{Jasso}.
\end{Setup}

\begin{Lemma}
\label{non-proj-syzygy}
For \(1\leqslant i \leqslant d-1\) and \(X\in \cX\), the \(i\)th syzygy \(\omega^iX\), as defined by a minimal projective resolution of \(X\), has no non-zero projective summands. 
\end{Lemma}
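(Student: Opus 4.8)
The plan is to combine a dimension-shift argument, which will show that every homomorphism from $\omega^i X$ into a free module extends over its embedding into $P_{i-1}$, with the minimality of the resolution, which forces that embedding to land inside the radical. Together these will make a projective summand of $\omega^i X$ into a free summand sitting inside a radical, which is impossible.

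First I would record that every projective $\Gamma$-module lies in $\cX$: if $P$ is projective then $\Ext_\Gamma^j( P,\cX ) = 0$ for all $j \geqslant 1$, so $P$ satisfies the first defining equality of $\cX$ in Definition \ref{def:d-CT}, whence $P \in \cX$. In particular $\Gamma \in \cX$, and more generally every free module $\Gamma^n \in \cX$. Since $X \in \cX$, the defining property then yields $\Ext_\Gamma^i( X,\Gamma^n ) = 0$ for $1 \leqslant i \leqslant d-1$. Next I would write the syzygy short exact sequence $0 \to \omega^i X \xrightarrow{\iota} P_{i-1} \to \omega^{i-1} X \to 0$, with $\omega^0 X = X$. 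Applying $\Hom_\Gamma( -,\Gamma^n )$ and using the standard dimension shift $\Ext_\Gamma^1( \omega^{i-1} X,\Gamma^n ) \cong \Ext_\Gamma^i( X,\Gamma^n ) = 0$, the long exact sequence shows that the restriction map $\Hom_\Gamma( P_{i-1},\Gamma^n ) \to \Hom_\Gamma( \omega^i X,\Gamma^n )$ is surjective; that is, every homomorphism $\omega^i X \to \Gamma^n$ factors through $\iota$. Minimality of the projective resolution gives at the same time that $\iota( \omega^i X ) \subseteq \rad P_{i-1}$.

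Finally I would argue by contradiction. Suppose $\omega^i X$ had a non-zero projective summand $P$, with split inclusion $s : P \to \omega^i X$, projection $p : \omega^i X \to P$ satisfying $ps = \id_P$, and a split monomorphism $j : P \hookrightarrow \Gamma^n$ into a free module. Set $f = jp : \omega^i X \to \Gamma^n$ and extend it, by the surjectivity just established, to $\tilde f : P_{i-1} \to \Gamma^n$ with $f = \tilde f \iota$. As a $\Gamma$-linear map $\tilde f$ sends $\rad P_{i-1}$ into $\rad \Gamma^n$, so $\im f = \tilde f( \iota( \omega^i X ) ) \subseteq \rad \Gamma^n$. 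On the other hand $fs = jps = j$, so $\im j \subseteq \im f \subseteq \rad \Gamma^n$; but $\im j \cong P$ is a non-zero direct summand of $\Gamma^n$, and a non-zero summand of a free module cannot be contained in its radical by Nakayama's lemma. This contradiction shows $\omega^i X$ has no non-zero projective summand.

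I expect the main obstacle to be the last paragraph: the passage from ``$\omega^i X$ has a projective summand'' to ``a free summand lies inside the radical'' requires carefully routing the summand through a free module via $j$ and using the extension property together with minimality simultaneously. The only other point demanding care is the bookkeeping on the range of indices, namely ensuring that the needed vanishing $\Ext_\Gamma^i( X,\Gamma^n ) = 0$ is supplied by the hypothesis exactly on $1 \leqslant i \leqslant d-1$ (the statement being vacuous for $d = 1$).
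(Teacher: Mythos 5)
Your proof is correct and follows essentially the same route as the paper's: both observe that projective modules lie in $\cX$, use the vanishing $\Ext^i_\Gamma(X,-)=0$ on projectives together with dimension shifting to extend maps from $\omega^i X$ across its radical embedding into $P_{i-1}$, and derive a splitting that contradicts minimality of the resolution. The only cosmetic difference is that the paper maps $\omega^i X$ directly onto the putative summand $Q$ via $(0,1_Q)$ and contradicts the projective-cover property, whereas you route the summand through a free module $\Gamma^n$ and invoke Nakayama's lemma --- the same argument in slightly different clothing.
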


\begin{proof}
Assume to the contrary that \(\omega^i X=\widetilde{\omega^i X}\oplus Q\) with \(Q\) non-zero projective.  Consider the augmented minimal projective resolution with syzygies:
\begin{center}
\begin{tikzpicture}[scale=1.3]
\node (enddots) at (0,0) {\(\cdots\)};
\node (Pi) at (1,0) {\(P_i\)};
\node (Pi-1) at (3,0) {\(P_{i-1}\)};

\node (middots) at (5, 0) {\(\cdots\)};
\node (P0) at (6,0) {\(P_0\)};
\node (X) at (7,0) {\(X\)};
\node (0) at (8,0) {\(0.\)};

\node (syzi) at (2, -1) {\(\widetilde{\omega^iX}\oplus Q\)};
\node (syzi-) at (4, -1) {\(\omega^{i-1}X\)};

\draw[->] (enddots) -- (Pi);
\draw[->] (Pi) -- (Pi-1);
\draw[->] (Pi-1) -- (middots);
\draw[->] (middots) -- (P0);
\draw[->] (P0) -- (X);
\draw[->] (X) -- (0);

\draw[->] (Pi) -- (syzi);
\draw[->] (syzi) -- node[below, sloped, font=\footnotesize]{\tiny \((u,v)\)} (Pi-1);
\draw[->] (Pi-1) --node[below, sloped, font=\footnotesize]{\tiny \(p_{i-1}\)} (syzi-);
\draw[->] (syzi-) -- (middots);
\end{tikzpicture}
\end{center}
Since \(X,Q\in \cX\), we have \(\Ext^i_\Gamma(X,Q)=0\).  Hence the map \((0, 1_Q): \widetilde{\omega^i X}\oplus Q \rightarrow Q\) must factor through \((u,v)\), so there is a map \(w: P_{i-1}\rightarrow Q\) with \((0, 1_Q) = w\circ(u,v)\). In particular \(1_Q = w\circ v\), whence \(1_Q-wv=0\), so \(v\notin\rad_{\modf \Gamma}\). This contradicts that \(p_{i-1}\) is a projective cover.
\end{proof}

\begin{Lemma}
\label{left-min-proj-res}
Let \(X\in \cX\) have the augmented minimal projective resolution
\[
  \cdots \rightarrow P_2 \xrightarrow{f_2}P_1\xrightarrow{f_1}P_0\rightarrow X\rightarrow 0.
\]
\begin{enumerate}
\setlength\itemsep{4pt}

\item If \(2\leqslant j\leqslant d\) then \(f_j\) is left minimal.

\item If \(X\) has no non-zero projective summands, then \(f_1\) is left minimal.

\end{enumerate}
\end{Lemma}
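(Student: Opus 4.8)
The plan is to reduce both parts to a single left-minimality criterion phrased in terms of cokernels, and then to invoke Lemma \ref{non-proj-syzygy} for part (i) and the hypothesis on $X$ for part (ii). The starting point is the identification $\coker f_j \cong \omega^{j-1}X$ for every $j \geqslant 1$, where I adopt the convention $\omega^0 X = X$. This is immediate from exactness of the resolution: for $j \geqslant 2$ we have $\im f_j = \Ker f_{j-1}$, so $P_{j-1}/\im f_j \cong \im f_{j-1} = \omega^{j-1}X$, while for $j = 1$ the cokernel of $f_1$ is $P_0/\im f_1 \cong X$. Granting this, both statements reduce to the claim that \emph{if $f \colon P \to Q$ is a morphism in $\modf\Gamma$ with $Q$ projective and $\coker f$ has no non-zero projective direct summand, then $f$ is left minimal.}

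To establish the claim I would argue by contraposition, using that $\modf\Gamma$ is a Krull--Schmidt category (as $\Gamma$ is finite dimensional). If $f$ is not left minimal, then there is a decomposition $Q \cong Q' \oplus Q''$ with $Q'' \neq 0$ under which $f = \left(\begin{smallmatrix} f' \\ 0 \end{smallmatrix}\right)$ with $f' \colon P \to Q'$ left minimal; this is the standard splitting of a morphism in a Krull--Schmidt category into its left-minimal part and a zero part. Since $Q''$ is a non-zero direct summand of the projective module $Q$, it is itself projective, and $\coker f \cong \coker f' \oplus Q''$ then exhibits a non-zero projective direct summand of $\coker f$. This is exactly the contrapositive of the claim.

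Finally I would read off the two parts. For part (i), the range $2 \leqslant j \leqslant d$ corresponds to $1 \leqslant j-1 \leqslant d-1$, so Lemma \ref{non-proj-syzygy} guarantees that $\omega^{j-1}X \cong \coker f_j$ has no non-zero projective summand, whence $f_j$ is left minimal by the claim. For part (ii), $\coker f_1 \cong \omega^0 X = X$, which has no non-zero projective summand precisely by the hypothesis on $X$, so $f_1$ is left minimal. I expect the only non-routine step to be the left-minimality criterion itself, and in particular the appeal to the left-minimal-plus-zero decomposition of a morphism; the remaining ingredients are a direct use of exactness, of Lemma \ref{non-proj-syzygy}, and of the hypothesis. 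It is worth noting that minimality of the resolution enters only through Lemma \ref{non-proj-syzygy} (and to make the syzygies $\omega^i X$ well defined); the criterion itself does not use it.
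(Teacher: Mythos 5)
Your proof is correct, and although it rests on the same pivotal input as the paper's proof --- Lemma \ref{non-proj-syzygy} --- the mechanism is genuinely different. The paper verifies left minimality of \(f_j\) directly from the definition: given \(g\) with \(gf_j = f_j\), exactness lets one write \(g = 1_{P_{j-1}} + hp_{j-1}\), where \(p_{j-1} \colon P_{j-1} \to \omega^{j-1}X\) is the projection onto the syzygy; since \(\omega^{j-1}X\) has no nonzero projective summands, \(p_{j-1}\) (hence \(hp_{j-1}\)) lies in the radical, so \(g\) is invertible. You instead argue by contraposition via the Krull--Schmidt decomposition of a morphism into a left minimal part plus a zero part: if \(f_j\) were not left minimal, the zero part would contribute a nonzero projective direct summand of \(\coker f_j \cong \omega^{j-1}X\), contradicting Lemma \ref{non-proj-syzygy} in part (i), or the hypothesis on \(X\) in part (ii), since \(\coker f_1 \cong X\). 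What your route buys is a clean, reusable criterion --- any morphism into a projective module whose cokernel has no nonzero projective direct summands is left minimal --- which treats both parts of the lemma uniformly; the cost is the appeal to the existence of the left-minimal-plus-zero decomposition, a standard but nontrivial fact about Krull--Schmidt categories (your identification of when that decomposition has trivial zero part with left minimality is the one point a referee would ask you to justify or cite precisely). The paper's route is more elementary and self-contained: it needs only that a morphism from a projective module to a module without nonzero projective summands lies in the radical, so that \(1_{P_{j-1}} + hp_{j-1}\) is invertible. Your computation \(\coker f_j \cong \omega^{j-1}X\) is exactly the link the paper exploits implicitly by factoring \(g - 1_{P_{j-1}}\) through \(p_{j-1}\).
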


\begin{proof}
(i):  Suppose \(g:P_{j-1}\rightarrow P_{j-1}\) satisfies \(gf_j=f_j\). 
Let \(p_{j-1}:P_{j-1}\rightarrow \omega^{j-1}X\) be the projective cover of the \((j-1)\)th syzygy. 
Since \((g-1_{P_{j-1}})f_j=0\), there must exist \(h: \omega^{j-1}X\rightarrow P_{j-1}\) such that \(g-1_{P_{j-1}}=hp_{j-1}\). 
In other words, \(g=1_{P_{j-1}}+hp_{j-1}\). But Lemma \ref{non-proj-syzygy} implies that \(p_{j-1}\) is in the radical, so \(g\) is invertible.

(ii): Use the same argument as for (i) with \(f_1\) in place of \(f_j\).
\end{proof}

\begin{Lemma}\label{projres-d-cokernel}
If \(X\in \cX\) has the augmented projective resolution
\[
  \cdots \xrightarrow{f_2} P_1 \xrightarrow{f_1} P_0\xrightarrow{f_0} X\rightarrow 0,
\]
then
\[
  P_{r-1} \xrightarrow{f_{r-1}} P_{r-2} \xrightarrow{f_{r-2}} \cdots \xrightarrow{f_2} P_1 \xrightarrow{f_1} P_0 \xrightarrow{f_0} X \rightarrow 0
\rightarrow \cdots \rightarrow 0
\]
is a \(d\)-cokernel of \(f_{r}\) in $\cX$ for each $1 \leqslant r \leqslant d$.  (For the definition of $d$-cokernels see \cite[def.\ 2.2]{Jasso}.)
\end{Lemma}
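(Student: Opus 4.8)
The plan is to verify the $d$-cokernel condition straight from the definition via the contravariant Hom functor. Recall from \cite[def.\ 2.2]{Jasso} that the displayed sequence is a $d$-cokernel of $f_r$ in $\cX$ exactly when, upon restoring the source $P_r$ and applying $\cX(-,Y)$ for an arbitrary $Y\in\cX$, one obtains an exact sequence; since the appended zero objects contribute nothing under $\cX(-,Y)$, this amounts to requiring that
\[
  0 \to \cX(X,Y) \xrightarrow{\cX(f_0,Y)} \cX(P_0,Y) \xrightarrow{\cX(f_1,Y)} \cdots \xrightarrow{\cX(f_r,Y)} \cX(P_r,Y)
\]
be exact at every term except the last one, for all $Y\in\cX$. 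So the whole statement is a single exactness check.

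First I would record that every entry of the sequence lies in $\cX$: the $P_i$ are projective, hence satisfy $\Ext^i_\Gamma(P_i,-)=0$ for $i\geqslant 1$ and so belong to $\cX$ by the defining equality of a $d$-cluster tilting subcategory; the zero object and $X$ are in $\cX$ as well. Because $\cX$ is a \emph{full} subcategory of $\modf\Gamma$, the functor $\cX(-,Y)$ coincides with $\Hom_\Gamma(-,Y)$, so the displayed sequence is precisely the truncated, reindexed complex $\Hom_\Gamma(P_\bullet,Y)$ that computes $\Ext^\bullet_\Gamma(X,Y)$ from the given projective resolution. I would then read off the required exactness: injectivity of $\cX(X,Y)\to\cX(P_0,Y)$ holds because $f_0$ is an epimorphism, and exactness at $\cX(P_0,Y)$ is the ordinary left-exactness of $\Hom_\Gamma(-,Y)$ applied to $P_1\to P_0\to X\to 0$; for $1\leqslant i\leqslant r-1$, exactness at $\cX(P_i,Y)$ is by definition the vanishing of $\Ext^i_\Gamma(X,Y)$.

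Finally these Ext groups vanish: since $X,Y\in\cX$ and $\cX$ is $d$-cluster tilting, $\Ext^i_\Gamma(X,Y)=0$ for $1\leqslant i\leqslant d-1$, and the hypothesis $r\leqslant d$ gives $r-1\leqslant d-1$, so every relevant degree lies inside this window. This finishes the verification, and note that minimality of the resolution is never used, matching the hypothesis that $P_\bullet$ is merely a projective resolution. The one place that demands care is the bookkeeping: correctly matching the indexing of the $d$-cokernel in \cite[def.\ 2.2]{Jasso} with the truncated complex $\Hom_\Gamma(P_\bullet,Y)$, and confirming that $r\leqslant d$ keeps all the pertinent Ext-degrees within the range $1\leqslant i\leqslant d-1$ where $d$-cluster tilting forces vanishing. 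Beyond this translation there is no genuine obstacle.
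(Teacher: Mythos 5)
Your proof is correct and follows essentially the same route as the paper's: identify $\cX(-,Y)$ with $\Hom_\Gamma(-,Y)$ via fullness of $\cX$, and deduce the required exactness of the Hom-complex from the vanishing $\Ext^i_\Gamma(X,Y)=0$ for $1\leqslant i\leqslant d-1$ guaranteed by the $d$-cluster tilting property, with $r\leqslant d$ keeping all degrees in that window. Your write-up is merely more explicit about the bookkeeping (which terms lie in $\cX$, where exactness is required in Jasso's definition) than the paper's terse argument.
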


\begin{proof}
By the definition of $d$-cokernels, we must show that the complex
\[
  P_r \xrightarrow{f_r} P_{r-1} \xrightarrow{f_{r-1}} P_{r-2} \xrightarrow{f_{r-2}} \cdots \xrightarrow{f_2} P_1 \xrightarrow{f_1} P_0 \xrightarrow{f_0} X \rightarrow 0
\rightarrow \cdots \rightarrow 0
\]
becomes exact when we apply the functor $\cX( -,Y )$ for $Y \in \cX$.  Since $\cX$ is a full subcategory, this amounts to the complex becoming exact when we apply the functor $\Hom_{ \Gamma }( -,Y )$ for $Y \in \cX$.  This is true because $\Ext_{ \Gamma }^i( X,Y ) = 0$ for $1 \leqslant i \leqslant d-1$ since $X,Y \in \cX$.
\end{proof}

\begin{Lemma}
\label{split-ext}
Let
\[
  \varepsilon = \left [
  0 \rightarrow X \xrightarrow{f^{-1}} Y^0 \xrightarrow{f^0} \cdots \xrightarrow{f^{d-2}} Y^{d-1} \xrightarrow{f^{d-1}} Z \rightarrow 0\right ]
\]
be a \(d\)-extension in $\modf \Gamma$.
\begin{enumerate}
\setlength\itemsep{4pt}

  \item  Suppose $\Ext_{ \Gamma }^i( Y^i,X )=0$ for $1 \leqslant i \leqslant d-1$.  If $\varepsilon$ represents $0$ in $\Ext_{ \Gamma }^d( Z,X )$, then $f^{-1}$ is a split monomorphism. 
  
  \item  Suppose $\Ext_{ \Gamma }^i( Z,Y^{ d-i } )=0$ for $1 \leqslant i \leqslant d-1$.  If $\varepsilon$ represents $0$ in $\Ext_{ \Gamma }^d( Z,X )$, then $f^{d-1}$ is a split epimorphism.
  
\end{enumerate}
\end{Lemma}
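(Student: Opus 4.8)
The plan is to read $[\varepsilon]$ off the image filtration of the $d$-extension and then propagate the vanishing $[\varepsilon]=0$ backwards through a chain of connecting homomorphisms. Set $C_0 = X$, $C_d = Z$, and $C_j = \im f^{j-1} = \ker f^j$ for $1 \leqslant j \leqslant d-1$, so that $\varepsilon$ is the splice of the short exact sequences
\[
  \sigma_j : 0 \to C_j \to Y^j \to C_{j+1} \to 0 \qquad (0 \leqslant j \leqslant d-1),
\]
where $\sigma_0$ is $0 \to X \xrightarrow{f^{-1}} Y^0 \to C_1 \to 0$ and $\sigma_{d-1}$ is $0 \to C_{d-1} \to Y^{d-1} \xrightarrow{f^{d-1}} Z \to 0$. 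The class $[\varepsilon] \in \Ext_\Gamma^d(Z,X)$ is the Yoneda product $[\sigma_{d-1}] \bullet \cdots \bullet [\sigma_0]$. Since $f^{-1}$ is a split monomorphism exactly when $\sigma_0$ splits, i.e.\ when $[\sigma_0] = 0$ in $\Ext_\Gamma^1(C_1,X)$, part (i) reduces to deducing $[\sigma_0]=0$ from $[\varepsilon]=0$; dually, $f^{d-1}$ is a split epimorphism exactly when $[\sigma_{d-1}]=0$.

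For part (i) I would apply $\Hom_\Gamma(-,X)$ to the $\sigma_j$ and observe that the Yoneda product above exhibits $[\varepsilon]$ as the image of $[\sigma_0]$ under the composite of connecting homomorphisms
\[
  \Ext_\Gamma^1(C_1,X) \xrightarrow{\delta_{\sigma_1}} \Ext_\Gamma^2(C_2,X) \xrightarrow{\delta_{\sigma_2}} \cdots \xrightarrow{\delta_{\sigma_{d-1}}} \Ext_\Gamma^d(Z,X),
\]
since each $\delta_{\sigma_j}$ is, by definition, Yoneda multiplication by $[\sigma_j]$. In the long exact sequence of $\sigma_j$ the map $\delta_{\sigma_j}\colon \Ext_\Gamma^j(C_j,X) \to \Ext_\Gamma^{j+1}(C_{j+1},X)$ sits immediately to the right of $\Ext_\Gamma^j(Y^j,X)$, so the hypothesis $\Ext_\Gamma^j(Y^j,X)=0$ for $1 \leqslant j \leqslant d-1$ forces every $\delta_{\sigma_j}$ to be injective, hence the whole composite is injective. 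As $[\varepsilon]=0$ is its value on $[\sigma_0]$, we get $[\sigma_0]=0$ and $f^{-1}$ is a split monomorphism. For $d=1$ this is just the fact that a short exact sequence representing $0$ splits, and the hypotheses are vacuous.

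For part (ii) I would argue dually, the slickest route being to apply the exact duality $D = \Hom_k(-,k)$. It turns $\varepsilon$ into a $d$-extension $0 \to DZ \xrightarrow{Df^{d-1}} DY^{d-1} \to \cdots \to DY^0 \to DX \to 0$ in $\modf \Gamma^{\opp}$ which represents $0$ in $\Ext_{\Gamma^{\opp}}^d(DX,DZ)$ and whose leftmost map is $Df^{d-1}$. As the lemmas of this section hold over any finite dimensional algebra, part (i) applies over $\Gamma^{\opp}$; using $\Ext_{\Gamma^{\opp}}^i(DM,DN) \cong \Ext_\Gamma^i(N,M)$, the hypotheses of (i) for the dualised sequence become the assumed vanishing of the groups $\Ext_\Gamma^i(Z, Y^{\bullet})$ in (ii), and its conclusion that $Df^{d-1}$ is a split monomorphism is equivalent to $f^{d-1}$ being a split epimorphism. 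Alternatively, one can mirror the argument of (i) directly, applying $\Hom_\Gamma(Z,-)$ to the $\sigma_j$ and using the composite of connecting homomorphisms $\Ext_\Gamma^1(Z,C_{d-1}) \to \cdots \to \Ext_\Gamma^d(Z,X)$, each made injective by the corresponding $\Ext_\Gamma^\bullet(Z,Y^\bullet)$ vanishing, to force $[\sigma_{d-1}]=0$.

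The step I expect to be the crux is the degree bookkeeping: one must verify carefully that the $j$-th connecting map lands in precisely the cohomological degree at which the assumed vanishing $\Ext_\Gamma^j(Y^j,X)=0$ (respectively its dual) annihilates the obstruction term appearing just before $\delta_{\sigma_j}$ in the long exact sequence, so that injectivity is genuinely available at every stage. In particular, under the duality $D$ the indices of the $Y^j$ are reversed, so I would double-check that the reversed indexing matches the precise vanishing conditions imposed in (ii) before concluding. Once this matching is pinned down, the remainder is the standard formalism of Yoneda extensions, splicing, and long exact sequences.
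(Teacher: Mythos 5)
Your argument for part (i) is correct, and it is a genuinely different proof from the one in the paper. The paper maps $\varepsilon$ to a truncated injective resolution of $X$ and uses the vanishing $\Ext^1_{\Gamma}(Y^i,\sigma^{i-1}X)\cong\Ext^i_{\Gamma}(Y^i,X)=0$ to build, step by step, a homotopy $s^j$ ending in a retraction $s^0$ with $s^0f^{-1}=1_X$. You instead splice $\varepsilon$ into the short exact sequences $\sigma_j$ and observe that $[\varepsilon]$ is (up to sign) the image of $[\sigma_0]$ under a composite of connecting homomorphisms, each injective because it sits immediately after $\Ext^j_{\Gamma}(Y^j,X)=0$ in the long exact sequence of $\Hom_{\Gamma}(-,X)$ along $\sigma_j$. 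Your bookkeeping in (i) is exactly right: the map $\Ext^j_{\Gamma}(C_j,X)\to\Ext^{j+1}_{\Gamma}(C_{j+1},X)$ needs precisely $\Ext^j_{\Gamma}(Y^j,X)=0$, and these are the stated hypotheses for $1\leqslant j\leqslant d-1$. The only nonelementary input you use is the standard identification of the connecting homomorphism with Yoneda multiplication by $[\sigma_j]$; the paper's homotopy construction avoids invoking this and is more self-contained, while yours is shorter and isolates exactly where each hypothesis enters.

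For part (ii), the index check you postponed is not a formality: it fails, and it is the real content here. Duality reverses the positions of the middle terms, so applying (i) to $D\varepsilon$ over $\Gamma^{\opp}$ requires $\Ext^i_{\Gamma^{\opp}}(DY^{d-1-i},DZ)\cong\Ext^i_{\Gamma}(Z,Y^{d-1-i})=0$ for $1\leqslant i\leqslant d-1$, that is, vanishing against $Y^0,\dots,Y^{d-2}$; your direct route via $\Hom_{\Gamma}(Z,-)$ needs the same groups, since the connecting map $\Ext^i_{\Gamma}(Z,C_{d-i})\to\Ext^{i+1}_{\Gamma}(Z,C_{d-1-i})$ coming from $\sigma_{d-1-i}$ is injective when $\Ext^i_{\Gamma}(Z,Y^{d-1-i})=0$. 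The hypothesis printed in the lemma, $\Ext^i_{\Gamma}(Z,Y^{d-i})=0$, concerns $Y^1,\dots,Y^{d-1}$ instead, and with that hypothesis the statement is actually false. Take $d=2$, let $\Gamma$ be the path algebra of $A_2$, let $U$ be the indecomposable projective--injective module with simple top $S''$ and simple socle $S'$, and let $\varepsilon$ be the splice $0\to S''\to S''\oplus S'\to U\to S''\to 0$ of the split sequence $0\to S''\to S''\oplus S'\to S'\to 0$ with the almost split sequence $0\to S'\to U\to S''\to 0$. Then $[\varepsilon]=0$ in $\Ext^2_{\Gamma}(S'',S'')$ because $\Gamma$ is hereditary, and the printed hypothesis holds because $\Ext^1_{\Gamma}(S'',Y^1)=\Ext^1_{\Gamma}(S'',U)=0$ as $U$ is injective; yet $U\to S''$ is not a split epimorphism. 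The corrected hypothesis rules this out, since $\Ext^1_{\Gamma}(S'',Y^0)\neq 0$ because it contains $\Ext^1_{\Gamma}(S'',S')\neq 0$ as a direct summand. So your argument, like the paper's one-word proof (``dual''), proves (ii) with $Y^{d-1-i}$ in place of $Y^{d-i}$: the printed statement contains an off-by-one misprint, which your hedging has in effect uncovered. This is harmless for the paper, since in the only application, Proposition \ref{a-Ext-closed}(ii), all middle terms lie in $\cD$ and the assumption $\Ext^j_{\Gamma}(N,\cD)=0$ for $1\leqslant j\leqslant d-1$ kills every relevant group under either indexing.
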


\begin{proof}
We show (i) only, (ii) being dual.  Let
\[
  0\rightarrow X\xrightarrow{g^{-1}} I^0\xrightarrow{g^0} I^1\rightarrow \cdots
\]
be an augmented injective resolution. We use it to define the cozysygies \(\sigma^i X\) for \(i\geqslant 0\) which satisfy
\begin{equation}
\label{equ:Ext_vanishing_old}
  \Ext_{ \Gamma }^1(Y^i,\sigma^{i-1}X) =
  \Ext_{ \Gamma }^i(Y^i,X)= 0
\end{equation}
for \(1 \leqslant i \leqslant {d-1}\).  We can construct the following commutative diagram.
\begin{center}
\begin{tikzpicture}[scale=1.4]
\node (0ul) at (0,1) {\(0\)};
\node (0ll) at (0,0) {\(0\)};
\node (Xu) at (1,1) {\(X\)};
\node (Xl) at (1,0) {\(X\)};
\node (dotsu) at (4,1) {\(\cdots\)};
\node (dotsl) at (4,0) {\(\cdots\)};
\node (0ur) at (8,1) {\(0\)};
\node (0lr) at (8,0) {\(0\)};

\node (Y0) at (2,1) {\(Y^0\)};
\node (Y1) at (3,1) {\(Y^1\)};
\node (Yd2) at (5,1) {\(Y^{d-2}\)};
\node (Yd1) at (6,1) {\(Y^{d-1}\)};
\node (Z) at (7,1) {\(Z\)};

\node (I0) at (2,0) {\(I^0\)};
\node (I1) at (3,0) {\(I^1\)};
\node (Id2) at (5,0) {\(I^{d-2}\)};
\node (Id1) at (6,0) {\(I^{d-1}\)};
\node (omega) at (7,0) {\(\sigma^{d} X\)};

\draw[->] (0ul) 	--  						(Xu);
\draw[->] (0ll) 	-- 						(Xl);
\draw[->] (Xu) 	-- node[above]{\tiny \(f^{-1}\)} 		(Y0);
\draw[->] (Xl) 		-- node[below]{\tiny \(g^{-1}\)}		(I0);
\draw[->] (Y0) 	-- node[above]{\tiny \(f^0\)}  		(Y1);
\draw[->] (I0) 		-- node[below]{\tiny \(g^0\)} 		(I1);
\draw[->] (Y1) 	-- 						(dotsu);
\draw[->] (I1) 		-- 						(dotsl);
\draw[->] (dotsu) 	-- 						(Yd2);
\draw[->] (dotsl) 	-- 						(Id2);
\draw[->] (Yd2) 	-- node[above]{\tiny \(f^{d-2}\)}  	(Yd1);
\draw[->] (Id2) 	-- node[below]{\tiny \(g^{d-2}\)} 	(Id1);
\draw[->] (Yd1) 	-- node[above]{\tiny \(f^{d-1}\)}  		(Z);
\draw[->] (Id1) 	-- node[below]{\tiny \(g\)} 		(omega);
\draw[->] (Z) 		-- 						(0ur);
\draw[->] (omega) 	-- 						(0lr);


\draw[->] (Xu) 	-- node[left]{\tiny \(1_X\)} 		(Xl);
\draw[->] (Y0) 	-- node[left]{\tiny \(h^0\)}		(I0);
\draw[->] (Y1) 	-- node[left]{\tiny \(h^1\)}		(I1);
\draw[->] (Yd2) 	-- node[left]{\tiny \(h^{d-2}\)}	(Id2);
\draw[->] (Yd1) 	-- node[left]{\tiny \(h^{d-1}\)}	(Id1);
\draw[->] (Z) 		-- node[left]{\tiny \(h\)}		(omega);
\end{tikzpicture}
\end{center}
If $\varepsilon$ represents $0$ in $\Ext_{ \Gamma }^d( Z,X )$, then $h$ factors through $g$.  Using Equation \eqref{equ:Ext_vanishing_old} repeatedly, we can then construct the following homotopy.
\begin{center}
\begin{tikzpicture}[scale=1.4]
\node (0ul) at (0,1) {\(0\)};
\node (0ll) at (0,0) {\(0\)};
\node (Xu) at (1,1) {\(X\)};
\node (Xl) at (1,0) {\(X\)};
\node (dotsu) at (4,1) {\(\cdots\)};
\node (dotsl) at (4,0) {\(\cdots\)};
\node (0ur) at (8,1) {\(0\)};
\node (0lr) at (8,0) {\(0\)};

\node (Y0) at (2,1) {\(Y^0\)};
\node (Y1) at (3,1) {\(Y^1\)};
\node (Yd2) at (5,1) {\(Y^{d-2}\)};
\node (Yd1) at (6,1) {\(Y^{d-1}\)};
\node (Z) at (7,1) {\(Z\)};

\node (I0) at (2,0) {\(I^0\)};
\node (I1) at (3,0) {\(I^1\)};
\node (Id2) at (5,0) {\(I^{d-2}\)};
\node (Id1) at (6,0) {\(I^{d-1}\)};
\node (omega) at (7,0) {\(\sigma^{d} X\)};

\draw[->] (0ul) 	--  						(Xu);
\draw[->] (0ll) 	-- 						(Xl);
\draw[->] (Xu) 	-- node[above]{\tiny \(f^{-1}\)} 		(Y0);
\draw[->] (Xl) 		-- node[below]{\tiny \(g^{-1}\)}		(I0);
\draw[->] (Y0) 	-- node[above]{\tiny \(f^0\)}  		(Y1);
\draw[->] (I0) 		-- node[below]{\tiny \(g^0\)} 		(I1);
\draw[->] (Y1) 	-- 						(dotsu);
\draw[->] (I1) 		-- 						(dotsl);
\draw[->] (dotsu) 	-- 						(Yd2);
\draw[->] (dotsl) 	-- 						(Id2);
\draw[->] (Yd2) 	-- node[above]{\tiny \(f^{d-2}\)}  	(Yd1);
\draw[->] (Id2) 	-- node[below]{\tiny \(g^{d-2}\)} 	(Id1);
\draw[->] (Yd1) 	-- node[above]{\tiny \(f^{d-1}\)}  		(Z);
\draw[->] (Id1) 	-- node[below]{\tiny \(g\)} 		(omega);
\draw[->] (Z) 		-- 						(0ur);
\draw[->] (omega) 	-- 						(0lr);


\draw[->] (Xu) 	-- node[left]{\tiny \(1_X\)} 				(Xl);
\draw[->] (Y0) 	-- node[left, near end]{\tiny \(h^0\)}		(I0);
\draw[->] (Y1) 	-- node[left, near end]{\tiny \(h^1\)}		(I1);
\draw[->] (Yd2) 	-- node[left, near end]{\tiny \(h^{d-\!2}\)}	(Id2);
\draw[->] (Yd1) 	-- node[left, near end]{\tiny \(h^{d-\!1}\)}	(Id1);
\draw[->] (Z) 		-- node[left, near end]{\tiny \(h\)}		(omega);

\draw[dashed,->] (Y0)		-- node[left, near start]{\tiny \(s^0\)}		(Xl);
\draw[dashed,->] (Y1)		-- node[left, near start]{\tiny \(s^1\)}		(I0);
\draw[dashed,->] (Yd1)	-- node[left, near start]{\tiny \(s^{d-1}\)}	(Id2);
\draw[dashed,->] (Z)		-- node[left, near start]{\tiny \(s\)}		(Id1);
\end{tikzpicture}
\end{center}
Then \(s^0f^{-1}=1_X\) so \(f^{-1}\) is a split monomorphism.
\end{proof}

\section{Lemmas on the functor $\cT( T,- )$}
\label{sec:lemmas2}

The results of this section do not require the full assumptions on $\cT$ made in Setup \ref{set:blanket0}; they are valid if $\cT$ is a $k$-linear $\Hom$-finite category with a Serre functor $S$.

\begin{Lemma}
\label{add-inj}
\begin{enumerate}
\setlength\itemsep{4pt}

  \item  The functor \(\HomT-\) restricts to an equivalence $\add T \rightarrow \proj \Gamma$.

  \item  The functor \(\HomT-\) restricts to an equivalence $\add ST \rightarrow \inj \Gamma$.

\end{enumerate}
\end{Lemma}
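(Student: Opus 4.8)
The plan is to prove both statements by exploiting the Serre functor and the standard Yoneda-type identifications. For part (i), I would first recall that the functor $\cT(T,-)$ sends $T$ to $\Gamma = \End_\cT T$, which is the free right $\Gamma$-module of rank one, i.e.\ the indecomposable projective generator. Since $\cT(T,-)$ is additive, it sends $\add T$ into $\add \Gamma_\Gamma = \proj\Gamma$. The essential surjectivity onto $\proj\Gamma$ is then immediate, because every finitely generated projective is a summand of a finite direct sum of copies of $\Gamma = \cT(T,T)$, and $\cT$ has split idempotents (so idempotents in $\Gamma$ split in $\add T$). For fullness and faithfulness on $\add T$, the key observation is that for $T_0, T_1 \in \add T$ the map
\[
  \cT(T_1, T_0) \longrightarrow \Hom_\Gamma\bigl(\cT(T,T_1),\cT(T,T_0)\bigr)
\]
is an isomorphism; this is a routine Yoneda argument, reducing by additivity to the case $T_1 = T_0 = T$, where both sides are canonically $\Gamma$.

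For part (ii), I would use the Serre functor to transport the statement of (i) to injectives. The crucial computation is that for any $X \in \cT$ the natural isomorphism $D\cT(X, SY) \cong \cT(Y, X)$ exhibits $\cT(T, SY) \cong D\cT(Y,T)$ as a right $\Gamma$-module. In particular, taking $Y \in \add T$, the module $\cT(T, SY)$ is identified with the $k$-dual of the left $\Gamma$-module $\cT(Y,T)$; applying this with $Y = T$ gives $\cT(T,ST) \cong D(\Gamma)$, the injective cogenerator of $\modf\Gamma$. Since $S$ is an autoequivalence, it restricts to an equivalence $\add T \to \add ST$, and $\cT(T,-)$ restricted to $\add ST$ should then be an equivalence onto $\add D(\Gamma) = \inj\Gamma$. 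Essential surjectivity follows because every indecomposable injective is a summand of $D(\Gamma)$, and fullness/faithfulness follows by the same Yoneda argument as in (i), or by transporting (i) through $S$ and the duality.

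The main obstacle I anticipate is keeping the module structures and the variance straight when applying the Serre duality: the isomorphism $D\cT(X,Y) \cong \cT(Y,SX)$ is natural in both variables, but one must verify that the identification $\cT(T,ST) \cong D\Gamma$ is an isomorphism of \emph{right} $\Gamma$-modules that is compatible with the $\Gamma$-action coming from $\End_\cT T$ acting on $ST$ via $S$. Concretely, the right $\Gamma$-action on $\cT(T,ST)$ comes from precomposition with $S$ of endomorphisms of $T$, and one checks this matches the standard right $\Gamma$-module structure on $D\Gamma = D(\Gamma_\Gamma)$ (which is naturally the dual of the left module ${}_\Gamma\Gamma$). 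Once this compatibility is nailed down, the equivalence $\add ST \to \inj\Gamma$ is formal, since $S$ is an equivalence and $\cT(T,-)|_{\add ST}$ is fully faithful and hits a set of objects generating $\inj\Gamma$ under summands and direct sums.

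Everything beyond this is bookkeeping: fullness and faithfulness reduce to the single computation that $\cT(T,-)$ induces $\Gamma \xrightarrow{\sim} \End_\Gamma(\Gamma)$ and $\End_\cT(ST) \xrightarrow{\sim} \End_\Gamma(D\Gamma)$, both of which are instances of the Yoneda lemma combined with Serre duality, and the Krull--Schmidt property of $\cT$ guarantees that the images of the indecomposable summands of $T$ (respectively $ST$) are exactly the indecomposable projectives (respectively injectives).
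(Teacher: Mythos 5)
Your proof takes essentially the same route as the paper: part (i) is the classic Yoneda/projectivization argument (which the paper simply cites as classic), and part (ii) transports it to injectives through the commutative square $\cT(T,S(-)) \cong D\,\cT(-,T)$ given by Serre duality, using that $S$, $D$ and $\cT(-,T) : \add T \rightarrow \proj \Gamma\op$ are equivalences. One small correction to your bookkeeping worry: the right $\Gamma$-action on $\cT(T,ST)$ is plain precomposition with endomorphisms of $T$ (no application of $S$ is involved), and its compatibility with the standard action on $D\Gamma$ is exactly naturality of the Serre duality isomorphism $D\cT(X,T) \cong \cT(T,SX)$ in its second variable.
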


\begin{proof}
Part (i) is classic.  For part (ii) note that the Serre functor $S$ gives the following commutative square of functors,
\begin{center}
\begin{tikzpicture}[xscale=4, yscale=2]
\node (aT) at (0,0) {\(\add T\)};
\node (pGo) at (1,0) {\(\proj \Gamma\op\)};
\node (aST) at (0,-1) {\(\add ST\)};
\node (iG) at (1,-1) {\(\inj \Gamma\),};

\draw[->](aT) -- node[above]{\tiny \(\cT(-,T)\)} (pGo);
\draw[->](aT) -- node[left]{\tiny \(S\)} (aST);
\draw[->](aST) -- node[below]{\tiny \(\cT(T,-)\)} (iG);
\draw[->](pGo) -- node[right]{\tiny \(D\)} (iG);
\end{tikzpicture}
\end{center}
where $\proj \Gamma\op$ is the category of projective finite dimensional left $\Gamma$-modules, and the functor $D( - ) = \Hom_k( -,k )$ denotes $k$-linear equivalence.  The functors $S$ and $D$ in the diagram are equivalences, and it is classic that so is $\cT( -,T )$.  Hence the functor $\cT( T,- ): \add ST \rightarrow \inj \Gamma$ is an equivalence.
\end{proof}

\begin{Lemma}
\label{slightlyfaithful}
For \(T'\in \add T\) and \(X\in\T\), the induced maps
\begin{enumerate}
\setlength\itemsep{4pt}

  \item  $\cT( T',X ) 
\rightarrow \Hom_{ \Gamma }( \cT( T,T' ), \cT( T,X ) )$,

  \item  $\cT( X,ST' )
\rightarrow \Hom_{ \Gamma }( \cT( T,X ), \cT( T,ST' ) )$

\end{enumerate}
are bijective.
\end{Lemma}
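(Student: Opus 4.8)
The plan is to observe that both displayed maps are the comparison morphisms induced by the functor $\cT(T,-)$, to reduce each statement to the case $T' = T$ by additivity, and then to argue directly. In both parts the map is natural in $T'$, and both its source and its target are additive functors of $T'$ (contravariantly in part (i); covariantly, through $S$, in part (ii)). Since $\cT$ has split idempotents and every object of $\add T$ is a direct summand of a finite coproduct of copies of $T$, a natural transformation of additive functors on $\add T$ that is bijective at $T' = T$ is bijective throughout $\add T$: it commutes with finite biproducts and with the splitting of idempotents. Hence it suffices to treat $T' = T$.

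For part (i) with $T' = T$, the target is $\Hom_\Gamma(\Gamma, \cT(T,X))$, and evaluation at $1_T$ gives the standard isomorphism $\Hom_\Gamma(\Gamma, \cT(T,X)) \xrightarrow{\sim} \cT(T,X)$. The map sends $\phi \in \cT(T,X)$ to $\cT(T,\phi)$, i.e.\ to post-composition by $\phi$, whose value at $1_T$ is $\phi$. Thus the composite of our map with evaluation at $1_T$ is the identity of $\cT(T,X)$, so the map is bijective.

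For part (ii) with $T' = T$, I would prove injectivity and then match dimensions. For injectivity, suppose $\psi \in \cT(X, ST)$ has $\cT(T,\psi) = 0$, i.e.\ $\psi \circ f = 0$ for every $f \in \cT(T,X)$. Serre duality provides a non-degenerate pairing $\cT(T,X) \times \cT(X,ST) \to k$ whose value on $(f,\psi)$ is obtained by applying the trace functional $\cT(T,ST) \to k$ to the composite $\psi \circ f$; since all these composites vanish, $\psi$ pairs to zero with everything, whence $\psi = 0$. For the dimension count, Serre duality gives $\cT(X,ST) \cong D\cT(T,X)$, so the source has dimension $\dim_k \cT(T,X)$. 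For the target, Lemma~\ref{add-inj}(ii) together with the commutative square in its proof identifies $\cT(T,ST)$ with the injective cogenerator $D\Gamma$; then the standard isomorphism $\Hom_\Gamma(M, D\Gamma) \cong DM$ (valid for finite-dimensional $M$) gives $\Hom_\Gamma(\cT(T,X), \cT(T,ST)) \cong D\cT(T,X)$, again of dimension $\dim_k \cT(T,X)$. An injective linear map between finite-dimensional spaces of equal dimension is bijective, completing part (ii).

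The main obstacle is part (ii): one must handle the Serre functor and the various $\Gamma$-module structures without losing track of naturality. The injectivity-plus-dimension strategy is precisely what sidesteps the delicate point of checking that the displayed map literally coincides with the composite of the Serre-duality and tensor--hom isomorphisms; all that is then needed is non-degeneracy of the Serre pairing (for injectivity) and the two dimension identities above (for surjectivity). I would also confirm that the reduction to $T' = T$ is legitimate in part (ii) by verifying that the comparison map is natural in $T'$, which is immediate from functoriality of $\cT(T,-)$.
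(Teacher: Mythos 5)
Your proof is correct, and while your part (i) coincides with the paper's argument (reduce to $T'=T$ by naturality of a transformation between additive functors on $\add T$, then identify the map with the identity of $\cT(T,X)$ via $\Hom_{\Gamma}(\Gamma,-)$), your part (ii) takes a genuinely different route. The paper proves (ii) by first running the argument of (i) on the other side: since $S$ is an autoequivalence, $\Gamma \cong \cT(ST,ST)$, so the comparison map $\cT(X,ST') \rightarrow \Hom_{\Gamma\op}(\cT(ST',ST),\cT(X,ST))$ for the functor $\cT(-,ST)$ is bijective; it then composes this with bijections coming from $k$-linear duality and Serre duality, and finally checks, using naturality of the constituent morphisms, that the resulting composite equals the map in (ii). You instead reduce (ii) directly to $T'=T$, prove injectivity from the fact that the Serre pairing $\cT(T,X)\times\cT(X,ST)\rightarrow k$ is computed as composition followed by the trace functional $\xi \mapsto \eta_{T,T}(\xi)(1_T)$ on $\cT(T,ST)$, where $\eta_{X,Y}\colon \cT(Y,SX)\rightarrow D\cT(X,Y)$ denotes the Serre duality isomorphism (this trace description follows from naturality of $\eta$ in the second variable), and conclude by a dimension count: $\cT(X,ST)\cong D\cT(T,X)$ by Serre duality, while Lemma~\ref{add-inj}(ii) gives $\cT(T,ST)\cong D\Gamma$ as right $\Gamma$-modules and hence $\Hom_{\Gamma}(\cT(T,X),\cT(T,ST))\cong D\cT(T,X)$. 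The trade-off: the paper's route identifies the map in (ii) structurally as a composite of dualities, at the cost of verifying that a composite of abstract isomorphisms equals the concrete comparison map; your route replaces that verification with the (equally naturality-based, but standard) trace description of the Serre pairing plus Hom-finiteness, so the surjectivity step becomes pure linear algebra. Both arguments lean on a naturality check of comparable depth, so neither is strictly simpler, but yours is a valid and somewhat more self-contained alternative.
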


\begin{proof}
(i): Fixing $X$, the map in (i) is a natural transformation of additive functors of $T' \in \add T$.  Hence it is enough to show bijectivity for $T' = T$, where the map is
\[
  \cT( T,X ) 
  \rightarrow \Hom_{ \Gamma }( \cT( T,T ), \cT( T,X ) )
  = \Hom_{ \Gamma }( \Gamma, \cT( T,X ) ).
\]
This is bijective since it can be identified with the identity map on $\cT( T,X )$.

(ii): The Serre functor $S$ is an autoequivalence so $\Gamma = \cT( ST,ST )$.  An argument analogous to that in (i) shows that the induced map
\begin{equation}
\label{equ:inj_iso1}
  \cT( X,ST' )
\rightarrow \Hom_{ \Gamma\op }( \cT( ST',ST ),\cT( X,ST ) )
\end{equation}
is bijective.  However, there are further bijections
\begin{align}
\nonumber
   \Hom_{ \Gamma\op }( \cT&( ST',ST ),\cT( X,ST ) )
    \rightarrow 
   \Hom_{ \Gamma }( D\cT( X,ST ),D\cT( ST',ST ) )
    \rightarrow \\
\label{equ:inj_iso2}
  & \Hom_{ \Gamma }( \cT( T,X ),\cT( T,ST' ) ),
\end{align}
by $k$-linear and Serre duality.  Using the natural property of the constituent morphisms, it can be checked that the composition of \eqref{equ:inj_iso1} and \eqref{equ:inj_iso2} is the map in (ii) which is hence bijective.
\end{proof}

\begin{Lemma}
\label{lem:objects_sent_to_projectives}
Assume that $\cT( T,- ) : \cT \rightarrow \modf \Gamma$ is a full functor.  If $X \in \cT$ is indecomposable and $\cT( T,X )$ is a projective $\Gamma$-module, then $X \in \add T$.  
\end{Lemma}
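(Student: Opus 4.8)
The plan is to realise $X$ up to isomorphism as an object of $\add T$ by lifting a module isomorphism to $\cT$ and then invoking Krull--Schmidt. Write $P = \cT( T,X )$, which is projective by hypothesis. By Lemma \ref{add-inj}(i) the functor $\cT( T,- )$ restricts to an equivalence $\add T \xrightarrow{\sim} \proj \Gamma$, so there is an object $T' \in \add T$ together with a $\Gamma$-isomorphism $\phi \colon \cT( T,T' ) \xrightarrow{\sim} \cT( T,X ) = P$.

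Next I would lift both $\phi$ and its inverse to morphisms of $\cT$. Applying Lemma \ref{slightlyfaithful}(i) with the object $T' \in \add T$, the map $\cT( T',X ) \rightarrow \Hom_{ \Gamma }( \cT( T,T' ),\cT( T,X ) )$ is bijective, so there is $f \colon T' \rightarrow X$ with $\cT( T,f ) = \phi$. Using the standing assumption that $\cT( T,- )$ is full, the $\Gamma$-morphism $\phi^{-1} \colon \cT( T,X ) \rightarrow \cT( T,T' )$ is of the form $\cT( T,g )$ for some $g \colon X \rightarrow T'$. Then $\cT( T,gf ) = \phi^{-1} \phi = 1$, and since $\cT( T,- )$ is faithful on $\add T$ (being an equivalence there by Lemma \ref{add-inj}(i)), this forces $gf = 1_{T'}$. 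Hence $f$ is a split monomorphism.

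Finally I would close the argument with Krull--Schmidt. The retraction $g$ makes $e = fg$ an idempotent on $X$, and since $\cT$ has split idempotents this yields a decomposition $X \cong T' \oplus C$ in which $f$ and $g$ exhibit $T'$ as a direct summand. Applying $\cT( T,- )$ to this decomposition and using that $\cT( T,f ) = \phi$ is invertible shows $\cT( T,C ) = 0$. Because $X$ is indecomposable, one of the two summands must vanish; as $\cT( T,T' ) \cong P \neq 0$ gives $T' \neq 0$, the vanishing summand is $C$, so $X \cong T' \in \add T$.

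The main obstacle is precisely this last step: upgrading the module-level isomorphism $\phi$ (equivalently, the invertibility of $\cT( T,f )$) to a genuine isomorphism $X \cong T'$ in $\cT$. The functor $\cT( T,- )$ need not be faithful on all of $\cT$, so from $\cT( T,f )$ being an isomorphism one only extracts that $f$ is a split monomorphism; it is exactly the indecomposability of $X$ together with the split-idempotent (Krull--Schmidt) property that rules out a complementary summand $C$ killed by $\cT( T,- )$ and thereby forces $X \cong T'$.
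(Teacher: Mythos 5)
Your proposal is correct and follows essentially the same route as the paper's proof: produce $T' \in \add T$ via Lemma \ref{add-inj}(i), lift the module isomorphism and its inverse to morphisms $T' \xrightarrow{f} X \xrightarrow{g} T'$ satisfying $\cT( T,gf ) = 1_{ \cT( T,T' ) }$, deduce $gf = 1_{ T' }$ from the fact that $\cT( T,- )$ is an equivalence on $\add T$, and conclude by indecomposability of $X$. The only cosmetic differences are that you invoke Lemma \ref{slightlyfaithful}(i) for the lift of $\phi$ where the paper simply uses fullness for both lifts, and that you spell out the splitting of the idempotent $fg$ (and the need for $T' \neq 0$), which the paper leaves implicit.
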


\begin{proof}
When $\cT( T,X )$ is projective, Lemma \ref{add-inj}(i) implies that there is some object $T' \in \add T$ such that $\cT( T,T' ) \cong \cT( T,X )$.  Since $\cT( T,- )$ is full, we can find morphisms $T' \xrightarrow{f} X \xrightarrow{g} T'$ which are mapped to inverse isomorphisms by $\cT( T,- )$.  In other words, $\cT( T,gf ) = \cT( T,g )\cT( T,f ) = 1_{ \cT( T,T' ) }$.  

It follows from Lemma \ref{add-inj}(i) that $gf = 1_{ T' }$.  Hence $T'$ is a direct summand of $X$.  But $X$ is indecomposable so in fact $X \cong T' \in \add T$.   
\end{proof}

\begin{Lemma}
\label{lem:weakly_representation_finite}
If $\cT$ has finitely many indecomposable objects, then so does $\cD$.  
\end{Lemma}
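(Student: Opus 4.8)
The plan is to exploit that both $\cT$ and $\modf\Gamma$ are Krull--Schmidt categories and that $\cD$ is, by definition, the essential image of $\cT(T,-)$. Write $X_1,\dots,X_n$ for a complete set of representatives of the isomorphism classes of indecomposable objects of $\cT$; this set is finite by hypothesis. Since $\cT$ is Krull--Schmidt, every object $X\in\cT$ decomposes as $X\cong\bigoplus_{i=1}^n X_i^{a_i}$ with $a_i\geqslant 0$, and applying the additive functor $\cT(T,-)$ gives $\cT(T,X)\cong\bigoplus_{i=1}^n \cT(T,X_i)^{a_i}$.

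First I would produce a finite list of candidate indecomposables in $\modf\Gamma$. Each module $\cT(T,X_i)$ is finite dimensional by $\Hom$-finiteness, so, $\modf\Gamma$ being Krull--Schmidt, it is a finite direct sum of indecomposable $\Gamma$-modules. Let $M_1,\dots,M_m$ be representatives of the (finitely many) isomorphism classes of indecomposable modules occurring as a direct summand of some $\cT(T,X_i)$.

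Next I would show that every indecomposable object of $\cD$ is isomorphic to one of the $M_j$. Given an indecomposable $D\in\cD$, by definition of the essential image there is $X\in\cT$ with $D\cong\cT(T,X)$. Combining the two decompositions above, $D$ is a direct sum of copies of the modules $M_1,\dots,M_m$. Since $\cD$ is a full subcategory of $\modf\Gamma$, the endomorphism ring of $D$ computed in $\cD$ agrees with that computed in $\modf\Gamma$, so $D$ is indecomposable as a $\Gamma$-module; by the uniqueness part of Krull--Schmidt in $\modf\Gamma$ this forces $D\cong M_j$ for a single $j$. Hence $\cD$ has at most $m$ indecomposable objects up to isomorphism.

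There is no serious obstacle here; the only points requiring care are the two invocations of Krull--Schmidt (for $\cT$, to reduce an arbitrary $X$ to the finite list $X_1,\dots,X_n$; and for $\modf\Gamma$, to pin down an indecomposable $D$ as one of the $M_j$) together with the observation, coming from fullness of the inclusion $\cD\hookrightarrow\modf\Gamma$, that indecomposability in $\cD$ and in $\modf\Gamma$ coincide. Note in particular that we never need $\cD$ itself to be closed under direct summands.
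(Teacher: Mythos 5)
There is a genuine gap, and it sits exactly at the point you flag as harmless. Your argument needs the implication ``$D$ indecomposable in $\cD$ $\Rightarrow$ $D$ indecomposable in $\modf\Gamma$'', and you derive it from fullness of the inclusion $\cD\subseteq\modf\Gamma$ via equality of endomorphism rings. But equality of endomorphism rings does not transfer indecomposability in that direction: indecomposability is detected by idempotents of $\End(D)$ only when those idempotents split \emph{inside the category at hand}. In a full additive subcategory that is not closed under direct summands, an object can be indecomposable in the subcategory while its endomorphism ring has nontrivial idempotents — they simply fail to split within the subcategory. (Toy example: in the full subcategory of $\modf k$ on the even-dimensional vector spaces, $k^2$ is indecomposable, yet its endomorphism ring is $M_2(k)$.) Nothing in the hypotheses of this lemma guarantees that $\cD$, the essential image of $\cT(T,-)$, is closed under summands or has split idempotents — the paper proves closure under summands only later, and only under condition \ref{condRes}. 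So your remark that ``we never need $\cD$ itself to be closed under direct summands'' is precisely backwards: that closure is what your endomorphism-ring step tacitly uses. The failure is not hypothetical: one can build a $\Hom$-finite Krull--Schmidt $\cT$ with two indecomposables $T,X$, $\End(T)=k[x]/(x^2)$, $\End(X)=k$, $\cT(X,T)=0$ and $\cT(T,X)\cong S\oplus S$ ($S$ the simple $\Gamma$-module, $x$ acting as zero); then $\cD$ consists of the modules $\Gamma^a\oplus S^{2b}$, and $S\oplus S$ is indecomposable in $\cD$ but is not isomorphic to any single indecomposable module summand $M_j$ of any $\cT(T,X_i)$. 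Thus your final claim, that every indecomposable of $\cD$ is some $M_j$, is false at the level of generality your proof invokes (this $\cT$ lacks a Serre functor, but your argument never uses the Serre functor).

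The lemma itself is fine, and the repair is to avoid Krull--Schmidt in $\modf\Gamma$ altogether, which is what the paper does. Given an indecomposable $D\in\cD$, write $D\cong\cT(T,X)\cong\cT(T,X_1)\oplus\cdots\oplus\cT(T,X_n)$ with the $X_i$ indecomposable in $\cT$. Each summand $\cT(T,X_i)$ is itself an object of $\cD$, so indecomposability of $D$ \emph{in} $\cD$ already forces all but one summand to vanish, giving $D\cong\cT(T,X_i)$ for a single indecomposable $X_i\in\cT$. Hence the indecomposable objects of $\cD$ occur among the finitely many modules $\cT(T,X_1),\dots,\cT(T,X_n)$ — some of which may well be decomposable as $\Gamma$-modules, which does no harm to the count. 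This uses Krull--Schmidt only in $\cT$ and is insensitive to whether indecomposability in $\cD$ agrees with indecomposability in $\modf\Gamma$.
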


\begin{proof}
Since $\cD$ is the essential image of $\cT( T,- )$, each indecomposable object $M \in \cD$ has the form $M \cong \cT( T,X_1 \oplus \cdots \oplus X_n ) \cong \cT( T,X_1 ) \oplus \cdots \oplus \cT( T,X_n )$, where the $X_i$ are indecomposable objects of $\cT$.  Since $M$ is indecomposable, precisely one summand is non-zero, so $M \cong \cT( T,X )$ for an indecomposable object $X \in \cT$.  Since $\cT$ has finitely many indecomposable objects up to isomorphism, it follows that so does $\cD$.
\end{proof}

\begin{Proposition}
\label{funcfin}
Assume that $\cT$ has weak kernels and weak cokernels.  Then $\cD$ is functorially finite in $\modf \Gamma$.
\end{Proposition}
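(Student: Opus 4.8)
The plan is to establish the two halves of functorial finiteness separately: I will show that every $M \in \modf \Gamma$ admits a left $\cD$-approximation (covariant finiteness) and a right $\cD$-approximation (contravariant finiteness). The two arguments are formally dual to one another, the first built from a projective presentation of $M$ together with the equivalence $\add T \simeq \proj \Gamma$ of Lemma \ref{add-inj}(i) and the weak cokernels of $\cT$, the second from an injective copresentation of $M$ together with the equivalence $\add ST \simeq \inj \Gamma$ of Lemma \ref{add-inj}(ii) and the weak kernels of $\cT$. Note that since $\cT$ is $\Hom$-finite, $\Gamma = \End_{ \cT } T$ is a finite dimensional $k$-algebra, so projective presentations and injective copresentations do exist in $\modf \Gamma$.

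For covariant finiteness I would start with a projective presentation $\cT( T,T_1 ) \xrightarrow{ \cT( T,g ) } \cT( T,T_0 ) \xrightarrow{ \pi } M \rightarrow 0$, where $T_0, T_1 \in \add T$ and $g : T_1 \rightarrow T_0$ is the unique morphism inducing the first map by Lemma \ref{add-inj}(i). Choosing a weak cokernel $T_0 \xrightarrow{ c } C$ of $g$ in $\cT$ gives $\cT( T,c )\cT( T,g ) = 0$, so $\cT( T,c )$ factors as $\bar{c}\, \pi$ for a unique $\bar{c} : M \rightarrow \cT( T,C )$. The claim is that $\bar{c}$ is a left $\cD$-approximation. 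To verify this I take any $\phi : M \rightarrow \cT( T,Y )$; then $\phi\pi : \cT( T,T_0 ) \rightarrow \cT( T,Y )$ equals $\cT( T,h )$ for a unique $h : T_0 \rightarrow Y$ by Lemma \ref{slightlyfaithful}(i), and $\phi\pi\,\cT( T,g ) = 0$ forces $\cT( T,hg ) = 0$, hence $hg = 0$ by the faithfulness part of Lemma \ref{slightlyfaithful}(i). The weak cokernel property then yields $h = h'c$, and feeding this back through the epimorphism $\pi$ shows $\phi = \cT( T,h' )\bar{c}$, as required.

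For contravariant finiteness I would dualise: begin with an injective copresentation $0 \rightarrow M \xrightarrow{ \iota } \cT( T,ST^0 ) \xrightarrow{ \cT( T,s ) } \cT( T,ST^1 )$, with $s : ST^0 \rightarrow ST^1$ obtained from Lemma \ref{add-inj}(ii), choose a weak kernel $K \xrightarrow{ k } ST^0$ of $s$ in $\cT$, and let $\bar{k} : \cT( T,K ) \rightarrow M$ be the factorisation of $\cT( T,k )$ through $\iota$. Running the mirror image of the previous argument---using Lemma \ref{slightlyfaithful}(ii) in place of (i) and the weak kernel property in place of the weak cokernel property---shows that $\bar{k}$ is a right $\cD$-approximation. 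Together these give that $\cD$ is functorially finite in $\modf \Gamma$.

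The main obstacle, and the place where the hypotheses are genuinely used, is the passage from an identity of induced maps in $\modf \Gamma$ to the corresponding identity of morphisms in $\cT$: it is exactly the faithfulness statements in Lemma \ref{slightlyfaithful} that let me conclude $hg = 0$ (resp.\ $sh = 0$) in $\cT$ from $\cT( T,hg ) = 0$ (resp.\ $\cT( T,sh ) = 0$), and only then can the universal property of the weak cokernel (resp.\ weak kernel) be applied to produce the desired factorisation. I would also take care that $Y$ is allowed to range over all objects of $\cT$, so that $\cT( T,Y )$ ranges over all of $\cD$ up to isomorphism; this is what makes $\bar{c}$ and $\bar{k}$ genuine $\cD$-approximations rather than approximations by some smaller subcategory.
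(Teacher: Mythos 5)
Your proposal is correct and follows essentially the same route as the paper: a projective presentation lifted through the equivalence $\add T \simeq \proj \Gamma$, a weak cokernel in $\cT$, and Lemma \ref{slightlyfaithful}(i) to transfer factorisations between $\cT$ and $\modf\Gamma$ for the left approximation, with the dual argument (injective copresentation, $\add ST \simeq \inj\Gamma$, weak kernel, Lemma \ref{slightlyfaithful}(ii)) for the right approximation. The only difference is notational, and you spell out the dual half which the paper leaves implicit.
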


\begin{proof}
Existence of left $\cD$-approximations:  Let \(M\in\modf \Gamma\) have the projective presentation
\[
  \HomT{T_1}\xrightarrow{\HomT f} \HomT{T_0}\xrightarrow{u} M\rightarrow 0,
\]
cf.\ Lemma \ref{add-inj}(i), and let
\[
  T_1 \xrightarrow f T_0 \xrightarrow g X
\]
be a weak cokernel.  Use \(\HomT -\) to get the following commutative diagram in $\modf \Gamma$,
\begin{center}
\begin{tikzpicture}[xscale=3, yscale=2]
\node (TT1u) at (0,1) {\(\HomT{T_1}\)};
\node (TT0u) at (1,1) {\(\HomT{T_0}\)};
\node (TX) at (2,1) {\(\HomT{X},\)};
\node (TT1d) at (0,2) {\(\HomT{T_1}\)};
\node (TT0d) at (1,2) {\(\HomT{T_0}\)};
\node (M) at (2,2) {\(M\)};
\node (0) at (3, 2) {\(0\)};

\draw[->](TT1u) -- node[below]{\tiny \(\HomT{f}\)} (TT0u);
\draw[->](TT1d) -- node[above]{\tiny \(\HomT{f}\)} (TT0d);
\draw[->](TT0u) -- node[below]{\tiny \(\HomT{g}\)} (TX);
\draw[->](TT0d) -- node[above]{\tiny $u$} (M);
\draw[->](M)--(0);

\draw[double equal sign distance] (TT1u)--(TT1d);
\draw[double equal sign distance] (TT0u)--(TT0d);
\draw[->] (M)--node[right]{\tiny \(v\)}(TX);
\end{tikzpicture}
\end{center}
where $v$ exists because $M$ is a cokernel while $\cT( T,g ) \circ \cT( T,f ) = \cT( T,gf ) = 0$.  We will show that \(v:M\rightarrow \HomT X\) is a left \(\D\)-approximation of \(M\).

Let \(w:M\rightarrow \HomT Y\) be a homomorphism in $\modf \Gamma$ and consider the composition \(w u:\HomT{T_0}\rightarrow\HomT Y\).  Lemma \ref{slightlyfaithful}(i) gives $wu =\cT( T,h )$ for some \(h:T_0\rightarrow Y\).  Then $hf : T_1 \rightarrow Y$ satisfies $\cT( T,hf ) = wu\cT( T,f ) = w \circ 0 = 0$ whence \(hf=0\) by Lemma \ref{slightlyfaithful}(i).  So \(h\) factors through $g$ and hence \(\cT( T,h ) = wu\) factors through $\cT( T,g ) = vu$.  Since \( u\) is an epimorphism,  this means that \(w\) factors through \(v\) as desired.

Existence of right $\cD$-approximations:  Let $N \in \modf \Gamma$ have the injective copresentation
\[
  0\rightarrow N \rightarrow \HomT{ST^0} \xrightarrow{\cT( T,f )}\HomT{ST^1},
\]
cf.\ Lemma \ref{add-inj}(ii), and let
\[
  Y \rightarrow ST^0 \xrightarrow{f} ST^1
\]
be a weak kernel.  Dually to the above, one shows that there is a right $\cD$-approximation $v: \cT( T,Y ) \rightarrow N$.
\end{proof}

\section{The conditions \ref{condGhostWeak}, \ref{condGhostDualWeak}, \ref{condRes}, \ref{condKer}, \ref{condGhost}, \ref{condGhostDual}}
\label{sec:conditions}

Recall that we still assume Setup \ref{set:blanket0}.  This section introduces the conditions \ref{condGhostWeak}, \ref{condGhostDualWeak}, \ref{condRes}, \ref{condKer}, \ref{condGhost}, \ref{condGhostDual}, and shows how they are linked to cluster tilting in the sense of Oppermann--Thomas.

\begin{Definition}
\label{def:conditions}
The following are conditions which can be imposed on the object $T$:

\begin{VarDescription}{{\textup{(a')}}\quad}
\setlength\itemsep{4pt}

  \myitem[{\textbf{\textup{(a)}}}] \label{condGhostWeak} 
Suppose that $M \in \modf \Gamma$ satisfies $\Ext^j_{ \Gamma }( \cD,M ) = 0$ for $1 \leqslant j \leqslant d-1$, and that \(T_1 \xrightarrow{f} T_0\) is a morphism in \(\add T\) for which
\[
  \HomT{T_1} \xrightarrow {\HomT f} \HomT{T_0} \rightarrow M \rightarrow 0
\]
is a minimal projective presentation in $\modf \Gamma$.  Then there exists a
completion of \(f\) to a $( d+2 )$-angle in $\cT$,
\[
  T_1 \xrightarrow{f} T_0 \xrightarrow{h_{ d+1 }} X_d \xrightarrow{ h_d } \cdots \xrightarrow{h_2} X_1 \xrightarrow{h_1} \Sigma^d T_1,
\]
which satisfies $\cT( T,h_i ) = 0$ for some $1 \leqslant i \leqslant d+1$.

  \myitem[{\textbf{\textup{(a')}}}]\label{condGhostDualWeak} 
Suppose that $N \in \modf \Gamma$ satisfies $\Ext^j_{ \Gamma }( N,\cD ) = 0$ for $1 \leqslant j \leqslant d-1$, and that \(ST_1 \xrightarrow{g} ST_0\) is a morphism in \(\add ST\) for which
\[
  0 \rightarrow N \rightarrow \HomT{ST_1} \xrightarrow {\HomT g} \HomT{ST_0}
\]
is a minimal injective copresentation in $\modf \Gamma$.  Then there exists a
completion of \(g\) to a $( d+2 )$-angle in $\cT$,
\[
  \Sigma^{ -d } ST_0 \xrightarrow{ h_{ d+1 } } X_d \xrightarrow{ h_d } \cdots \xrightarrow{ h_2 } X_1 \xrightarrow{ h_1 } ST_1 \xrightarrow{g} ST_0,
\]
which satisfies $\cT( T,h_i ) = 0$ for some $1 \leqslant i \leqslant d+1$. 

  \myitem[{\textbf{\textup{(b)}}}] \label{condRes}
Suppose that \(X\in \T\) is indecomposable and satisfies \(\HomT X\neq 0\). Then there exists a $( d+2) $-angle in $\cT$,
\[
  T_d\rightarrow \cdots \rightarrow T_0\rightarrow X\xrightarrow{h} \Sigma^d T_d,
\]
with \(T_i \in \add T\) for $0 \leqslant i \leqslant d$, which satisfies \(\HomT h=0\). 

  \myitem[{\textbf{\textup{(c)}}}] \label{condKer}
$\{ X \in \cT \mid \cT( T,X ) = 0 \} = \add\Sigma^d T$.

\end{VarDescription}

Stronger versions of \ref{condGhostWeak} and \ref{condGhostDualWeak} are also useful.

\begin{VarDescription}{{\textup{(strong a')}}\quad}
\setlength\itemsep{4pt}

  \myitem[{\textbf{\textup{(strong a)}}}] \label{condGhost}
The same as condition \ref{condGhostWeak}, except that in the last line we require \(\HomT{ h_d }=0\).

  \myitem[{\textbf{\textup{(strong a')}}}] \label{condGhostDual} 
The same as condition \ref{condGhostDualWeak}, except that in the last line we require \(\HomT{h_2}=0\).
  
\end{VarDescription}
\end{Definition}

Having stated the conditions, the implication (iii) $\Rightarrow$ (ii) in Theorem \ref{thm:C} is clear.  The other implications in the theorem will be proved in Sections \ref{sec:ii_to_i} and \ref{sec:i_to_iii}.

\begin{Lemma}
\label{lem:OT_connection}
$T$ is cluster tilting in the sense of Oppermann--Thomas (see Definition \ref{def:OT}) if and only if it satisfies \ref{condGhostWeak}, \ref{condGhostDualWeak}, \ref{condRes}, and \ref{condKer}. 
\end{Lemma}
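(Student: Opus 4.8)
The plan is to prove the biconditional in Lemma~\ref{lem:OT_connection} by showing that each defining condition of Oppermann--Thomas cluster tilting (Definition~\ref{def:OT}) can be matched against the conditions \ref{condGhostWeak}, \ref{condGhostDualWeak}, \ref{condRes}, \ref{condKer}, exploiting the equivalence $\HomT{-}\colon \add T\to\proj\Gamma$ (Lemma~\ref{add-inj}(i)) and the slight faithfulness of $\HomT{-}$ on $\add T$ (Lemma~\ref{slightlyfaithful}(i)) to translate between $( d+2 )$-angles in $\cT$ and exact data in $\modf\Gamma$. First I would dispose of the vanishing condition: I claim that Definition~\ref{def:OT}(i), namely $\HomT{\Sigma^d T}=0$, is essentially condition \ref{condKer}, which asserts $\{X\in\cT\mid \HomT X=0\}=\add\Sigma^d T$. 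The inclusion $\add\Sigma^d T\subseteq\{X\mid\HomT X=0\}$ is exactly Definition~\ref{def:OT}(i), so the content of \ref{condKer} beyond \ref{def:OT}(i) is the reverse inclusion; I would argue that under the presence of the $( d+2 )$-angular resolutions supplied by Definition~\ref{def:OT}(ii) (or, on the other side, by \ref{condRes}), any $X$ with $\HomT X=0$ must lie in $\add\Sigma^d T$.

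Next I would handle condition \ref{condRes} against Definition~\ref{def:OT}(ii). The Oppermann--Thomas condition gives, for \emph{every} $X$, a $( d+2 )$-angle $T_d\to\cdots\to T_0\to X\xrightarrow{h}\Sigma^d T_d$ with all $T_i\in\add T$. Applying $\HomT{-}$ and using $\HomT{\Sigma^d T_d}\subseteq\HomT{\Sigma^d T}=0$, the connecting map automatically satisfies $\HomT h=0$, so the indecomposable instances with $\HomT X\neq 0$ give exactly \ref{condRes}. Conversely, \ref{condRes} only supplies such resolutions for indecomposable $X$ with $\HomT X\neq 0$; for the remaining indecomposables (those with $\HomT X=0$, which by \ref{condKer} lie in $\add\Sigma^d T$, and which therefore trivially resolve) I would assemble resolutions for arbitrary $X$ by taking direct sums over indecomposable summands, so that \ref{condRes}${}+{}$\ref{condKer} recovers \ref{def:OT}(ii) in full.

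The substantive part is matching conditions \ref{condGhostWeak} and \ref{condGhostDualWeak} against the \emph{maximality} content already implicit in Definition~\ref{def:OT}. Here the strategy is to feed a minimal projective presentation $\HomT{T_1}\xrightarrow{\HomT f}\HomT{T_0}\to M\to 0$, where $M$ satisfies $\Ext^j_\Gamma(\cD,M)=0$ for $1\leqslant j\leqslant d-1$, complete $T_1\xrightarrow{f}T_0$ to a $( d+2 )$-angle using the $( d+2 )$-angulated structure, and then read off from the Oppermann--Thomas resolution of the cone $X$ that one of the connecting maps is killed by $\HomT{-}$. In one direction I would use Definition~\ref{def:OT}(ii) applied to the cone of $f$, splicing its resolution into the completing $( d+2 )$-angle and using that the higher $\Ext$-vanishing of $M$ forces the relevant piece of the long sequence to collapse so that some $\HomT{h_i}=0$; the dual argument, via Lemma~\ref{add-inj}(ii), Lemma~\ref{slightlyfaithful}(ii), and injective copresentations, yields \ref{condGhostDualWeak}. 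The reverse direction, deducing \ref{def:OT}(i) and the full resolution property of \ref{def:OT}(ii) from \ref{condGhostWeak}, \ref{condGhostDualWeak}, \ref{condRes}, \ref{condKer} together, requires combining the ghost-vanishing of the connecting maps with the projective-cover structure to reconstruct the orthogonality $\HomT{\Sigma^d T}=0$. The main obstacle I anticipate is bookkeeping in this reverse implication: the conditions \ref{condGhostWeak}/\ref{condGhostDualWeak} only guarantee that \emph{some} $h_i$ is ghost rather than a fixed one, so extracting a clean $( d+2 )$-angular resolution with the single required vanishing $\HomT h=0$ at the connecting map will need a careful rotation/splicing argument, together with minimality of the presentations to rule out spurious projective summands (here Lemmas~\ref{non-proj-syzygy} and \ref{left-min-proj-res} on minimal resolutions are the relevant tools).
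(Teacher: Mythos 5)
Your handling of \ref{condRes} and \ref{condKer} is correct and matches the paper: Definition \ref{def:OT}(i) is exactly the inclusion $\add\Sigma^d T \subseteq \{X \mid \cT(T,X)=0\}$, hence immediate from \ref{condKer}; Definition \ref{def:OT}(ii) is assembled from \ref{condRes} on indecomposables with $\cT(T,X)\neq 0$ plus trivial $(d+2)$-angles on objects of $\add\Sigma^d T$, using closure of $(d+2)$-angles under direct sums; and conversely \ref{condKer} follows from Definition \ref{def:OT} because $\cT(T,X)=0$ forces the morphism $T_0\to X$ in the resolution of $X$ to vanish, making $h$ a split monomorphism.

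The genuine gap is your third paragraph, on \ref{condGhostWeak} and \ref{condGhostDualWeak}. You call this ``the substantive part'' and plan to splice Oppermann--Thomas resolutions of the cone of $f$ against the $\Ext$-vanishing hypothesis on $M$, and then, for the converse, to use the ghost conditions together with projective covers to ``reconstruct'' the orthogonality $\cT(T,\Sigma^d T)=0$. Both halves miss the key (and very simple) point. In the ``only if'' direction, \ref{condGhostWeak} and \ref{condGhostDualWeak} are automatic consequences of Definition \ref{def:OT}(i) alone: in \emph{any} completion $T_1\xrightarrow{f}T_0\to X_d\to\cdots\to X_1\xrightarrow{h_1}\Sigma^d T_1$ of $f$ to a $(d+2)$-angle, the last morphism $h_1$ has target $\Sigma^d T_1\in\add\Sigma^d T$, so $\cT(T,h_1)=0$; that is, the existential ``some $1\leqslant i\leqslant d+1$'' is always satisfied with $i=1$. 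Dually, in \ref{condGhostDualWeak} one can take $i=d+1$, since Serre duality gives $\cT(T,\Sigma^{-d}ST_0)\cong D\cT(T_0,\Sigma^{d}T)=0$. No splicing, no use of the hypothesis on $M$, and no minimality arguments are needed --- the $\Ext$-vanishing hypothesis on $M$ plays no role anywhere in this lemma. In the ``if'' direction, \ref{condGhostWeak} and \ref{condGhostDualWeak} are simply never used: Definition \ref{def:OT}(i) is immediate from \ref{condKer} (as you yourself observe in your first paragraph, so your last paragraph contradicts your first), and Definition \ref{def:OT}(ii) follows from \ref{condRes} and \ref{condKer} as in your second paragraph. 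Consequently the ``careful rotation/splicing argument'' and the appeal to Lemmas \ref{non-proj-syzygy} and \ref{left-min-proj-res} that you flag as the main obstacle address a difficulty that does not exist, and as written this portion of your plan is too vague to constitute a proof of \ref{condGhostWeak} and \ref{condGhostDualWeak}.
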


\begin{proof}
``If'':  Assume that $T$ satisfies \ref{condGhostWeak}, \ref{condGhostDualWeak}, \ref{condRes}, and \ref{condKer}.  Definition \ref{def:OT}(i) is immediate from \ref{condKer}.

To establish Definition \ref{def:OT}(ii), note that, since the set of $( d+2 )$-angles is closed under direct sums by \cite[def.\ 2.1, (F1)(a)]{GKO}, we can assume that $X \in \cT$ is indecomposable.  If $\cT( T,X ) = 0$ then $X \in \add \Sigma^d T$ by \ref{condKer}, so the trivial $( d+2 )$-angle \[\Sigma^{ -d }X \rightarrow 0 \rightarrow \cdots \rightarrow 0 \rightarrow X \xrightarrow{ 1_X } X\] can be used in Definition \ref{def:OT}(ii).  If $\cT( T,X ) \neq 0$, then the $( d+2 )$-angle from \ref{condRes} can be used in Definition \ref{def:OT}(ii).

``Only if'':  Assume that \(T\) is cluster tilting in the sense of Oppermann--Thomas.

Suppose that we are in the situation of \ref{condGhostWeak}.  Then there is a morphism \(T_1 \xrightarrow{f} T_0\) in \(\add T\) which
we complete to a $( d+2 )$-angle in $\cT$:
\[
  T_1 \xrightarrow{f} T_0 \xrightarrow{ h_{ d+1 } } X_d \xrightarrow{ h_d } \cdots \xrightarrow{ h_2 } X_1 \xrightarrow{ h_1 } \Sigma^d T_1.
\]
Then $\cT( T,h_1 ) = 0$ since $\cT( T,\Sigma^d T ) = 0$, so $T$ satisfies \ref{condGhostWeak}.  A dual argument show that $T$ satisfies \ref{condGhostDualWeak}.

To show that $T$ satisfies \ref{condRes}, we can use the $( d+2 )$-angle from Definition \ref{def:OT}(ii), where $\cT( T,h ) = 0$ since $\cT( T,\Sigma^d T ) = 0$.

To show \ref{condKer}, let $X \in \cT$ be given with $\cT( T,X ) = 0$.  Then $\cT( T_0,X ) = 0$ for each $T_0 \in \add T$.  In particular, the morphism $T_0 \rightarrow X$ in the $( d+2 )$-angle from Definition \ref{def:OT}(ii) is zero, so $h$ is a split monomorphism whence $X \in \add \Sigma^d T$.  Conversely, let $X \in \add \Sigma^d T$ be given.  Then $\cT( T,X ) = 0$ since $\cT( T,\Sigma^d T ) = 0$.
\end{proof}

\section{Proof of the implication (ii) $\Rightarrow$ (i) in Theorem \ref{thm:C}}
\label{sec:ii_to_i}

Recall that we still assume Setup \ref{set:blanket0}.  After providing the necessary ingredients, this section ends with a proof of the implication (ii) $\Rightarrow$ (i) in Theorem \ref{thm:C}.

\begin{Lemma}
\label{lem:resolution}
Let $X \in \cT$ be indecomposable with $\cT( T,X ) \neq 0$ and consider a $( d+2 )$-angle satisfying the requirements in \ref{condRes}.  If we apply the functor $\cT( T,- )$ to all terms but the last, then we get a complex
\[
  \cT( T,T_d ) \rightarrow \cdots \rightarrow \cT( T,T_0 ) \rightarrow \cT( T,X )
\]
which is part of an augmented projective resolution of $\cT( T,X )$ over $\Gamma$.  
\end{Lemma}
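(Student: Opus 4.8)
The plan is to apply $\cT( T,- )$ to the \emph{entire} $( d+2 )$-angle and exploit the long exact sequence it induces. First I would record that projectivity of the terms is free: since each $T_i$ lies in $\add T$, Lemma \ref{add-inj}(i) shows that $\cT( T,T_i )$ is a projective $\Gamma$-module, so every term of the complex except $\cT( T,X )$ is projective. It remains to produce the exactness that makes this an initial segment of a resolution.

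Write the $( d+2 )$-angle supplied by \ref{condRes} as
\[
  T_d \rightarrow \cdots \rightarrow T_0 \xrightarrow{f_0} X \xrightarrow{h} \Sigma^d T_d .
\]
Applying $\cT( T,- )$ and invoking the long exact sequence attached to a $( d+2 )$-angle (the higher analogue of the long exact sequence of a triangle, see \cite{GKO}), I obtain an exact sequence
\[
  \cdots \rightarrow \cT( T,\Sigma^{-d}X ) \rightarrow \cT( T,T_d ) \rightarrow \cdots \rightarrow \cT( T,T_0 ) \xrightarrow{ \cT( T,f_0 ) } \cT( T,X ) \xrightarrow{ \cT( T,h ) } \cT( T,\Sigma^d T_d ) \rightarrow \cdots .
\]
The decisive input is the hypothesis $\cT( T,h ) = 0$ from \ref{condRes}: it forces the image of $\cT( T,f_0 )$ to coincide with the kernel of $\cT( T,h )$, which is all of $\cT( T,X )$. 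Hence $\cT( T,f_0 )$ is an epimorphism, exhibiting $\cT( T,X )$ as the cokernel of $\cT( T,T_1 ) \rightarrow \cT( T,T_0 )$; this supplies the augmentation.

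Finally, exactness of the displayed sequence at the interior spots $\cT( T,T_0 ), \ldots, \cT( T,T_{d-1} )$, together with the surjectivity just obtained, shows that
\[
  \cT( T,T_d ) \rightarrow \cdots \rightarrow \cT( T,T_0 ) \rightarrow \cT( T,X ) \rightarrow 0
\]
is exact except possibly at the leftmost term $\cT( T,T_d )$, and its terms are projective; this is exactly an initial segment of an augmented projective resolution of $\cT( T,X )$. I do not expect a genuine obstacle here: the one point to watch is that exactness at $\cT( T,T_d )$ need \emph{not} hold, since the long exact sequence continues to the term $\cT( T,\Sigma^{-d}X )$ lying outside the truncated complex — which is precisely why the conclusion asserts only that the complex is \emph{part of} a projective resolution. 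The substantive ingredients are the higher long exact sequence for $( d+2 )$-angles and the role of $\cT( T,h ) = 0$ in pinning down $\cT( T,X )$ as a cokernel rather than a mere subquotient.
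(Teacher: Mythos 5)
Your proof is correct and is essentially the paper's own argument: both rest on the exactness of the sequence obtained by applying $\cT(T,-)$ to the $(d+2)$-angle (\cite[prop.\ 2.5(a)]{GKO}), the vanishing $\cT(T,h)=0$ to get surjectivity of $\cT(T,f_0)$, and Lemma \ref{add-inj}(i) for projectivity of the terms $\cT(T,T_i)$. Your added remark that exactness may fail at the leftmost spot $\cT(T,T_d)$, which is exactly why the lemma claims only ``part of'' a resolution, is a correct and worthwhile clarification of a point the paper leaves implicit.
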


\begin{proof}
By \cite[prop.\ 2.5(a)]{GKO}, the complex is exact.  Since $\cT( T,h ) = 0$,  the last morphism is surjective.  By Lemma \ref{add-inj}(i), the $\Gamma$-modules $\cT( T,T_i )$ are projective.
\end{proof}

\begin{Lemma}
\label{lem:rad}
Let $X \in \cT$ be indecomposable with $\cT( T,X ) \neq 0$ and consider a $( d+2 )$-angle satisfying the requirements in \ref{condRes}.  Then $h \in \rad_{ \cT }$.
\end{Lemma}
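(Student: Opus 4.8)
The plan is to argue by contradiction, exploiting two features already available: the category $\cT$ is Krull--Schmidt (Setup \ref{set:blanket0}), and the $(d+2)$-angle supplied by \ref{condRes} carries the ghost condition $\cT( T,h ) = 0$. So I would begin by supposing $h \notin \rad_\cT$. Because the source $X$ is indecomposable, its endomorphism ring $\End_\cT( X )$ is local, and the standard description of the radical of a Krull--Schmidt category then forces the morphism $h \colon X \to \Sigma^d T_d$ to be a split monomorphism. (Concretely: $h \notin \rad_\cT$ means there is $g \colon \Sigma^d T_d \to X$ with $gh$ a unit of $\End_\cT( X )$, and rescaling $g$ yields a genuine retraction.)

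Next I would fix such a retraction $g \colon \Sigma^d T_d \to X$, so that $gh = 1_X$, and apply the functor $\cT( T,- )$. Functoriality gives
\[
  \cT( T,g )\,\cT( T,h ) = \cT( T,gh ) = 1_{ \cT( T,X ) }.
\]
But the $(d+2)$-angle was chosen to satisfy $\cT( T,h ) = 0$ by \ref{condRes}, so the left-hand side vanishes and we obtain $1_{ \cT( T,X ) } = 0$, that is, $\cT( T,X ) = 0$. This contradicts the standing hypothesis $\cT( T,X ) \neq 0$, and therefore $h \in \rad_\cT$.

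The only point that genuinely needs care is the passage from $h \notin \rad_\cT$ to ``$h$ is a split monomorphism''; this is a routine consequence of the Krull--Schmidt property together with the locality of $\End_\cT( X )$ for indecomposable $X$, and it requires no hypothesis beyond those already in force. It is worth emphasising that the argument uses \emph{only} \ref{condRes} and never invokes $\cT( T,\Sigma^d T ) = 0$: this is precisely why I prefer to extract the retraction and derive the contradiction directly, rather than first concluding that $X$ is a summand of $\Sigma^d T_d$ (i.e.\ $X \in \add \Sigma^d T$) and then trying to force $\cT( T,X ) = 0$, which would need the extra vanishing condition unavailable at this stage.
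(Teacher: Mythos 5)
Your proof is correct and follows essentially the same route as the paper's: both argue by contradiction, using the Krull--Schmidt property and the indecomposability of $X$ to convert $h \notin \rad_{\cT}$ into invertibility data, and then use the ghost condition $\cT( T,h ) = 0$ from \ref{condRes} to force $\cT( T,X ) = 0$. The only cosmetic difference is that the paper decomposes $\Sigma^d T_d$ into indecomposable summands and extracts an invertible matrix entry of $h$, arguing elementwise on morphisms $f : T \rightarrow X$, whereas you extract a retraction $g$ with $gh = 1_X$ and apply the functor $\cT( T,- )$; these are equivalent packagings of the same Krull--Schmidt fact.
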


\begin{proof}
Suppose $h \not\in \rad_{ \cT }$.  If we write $h$ as a matrix $H$ of morphisms from the indecomposable object $X$ to the indecomposable summands of $\Sigma^d T_d$, then one of the entries of $H$ is invertible, say $H_i$.  Let $f : T \rightarrow X$ be a morphism.  Then $hf = 0$ by \ref{condRes}, so in particular $H_if = 0$ whence $f = 0$.  Hence $\cT( T,X ) = 0$, a contradiction.
\end{proof}

\begin{Lemma}
\label{b-direct-summands}
If $T$ satisfies \ref{condRes}, then $\cD$ is closed under direct summands.
\end{Lemma}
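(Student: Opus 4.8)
The plan is to reduce the statement to the claim that $\cT(T,X)$ is indecomposable whenever $X \in \cT$ is indecomposable with $\cT(T,X) \neq 0$. Granting this, closure under summands follows from the Krull--Schmidt property of $\modf\Gamma$: an arbitrary object of $\cD$ has the form $\cT(T,U)$, and decomposing $U = \bigoplus_j U_j$ into indecomposables in the Krull--Schmidt category $\cT$ expresses $\cT(T,U) \cong \bigoplus_j \cT(T,U_j)$ as a direct sum of objects each of which is either zero or indecomposable, hence as a direct sum of indecomposables of $\cD$. Any direct summand of $\cT(T,U)$ is then, up to isomorphism, a sub-sum of these indecomposables and so lies in $\cD$.

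So I would fix an indecomposable $X$ with $\cT(T,X) \neq 0$, write $M = \cT(T,X)$, and suppose towards a contradiction that $M$ carries an idempotent endomorphism $e$ with $e \neq 0$ and $e \neq 1$. By \ref{condRes} choose a $(d+2)$-angle $T_d \xrightarrow{f_d} \cdots \xrightarrow{f_1} T_0 \xrightarrow{f_0} X \xrightarrow{h} \Sigma^d T_d$ with $T_i \in \add T$ and $\cT(T,h) = 0$. By Lemma \ref{lem:resolution} the sequence $\cT(T,T_1) \xrightarrow{\cT(T,f_1)} \cT(T,T_0) \xrightarrow{\cT(T,f_0)} M \to 0$ is an exact piece of a projective resolution, the $\cT(T,T_i)$ being projective by Lemma \ref{add-inj}(i). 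Since $\cT(T,T_0)$ is projective and $\cT(T,f_0)$ is an epimorphism, I would lift $e$ to a $\Gamma$-endomorphism $e_0$ of $\cT(T,T_0)$ with $\cT(T,f_0) \circ e_0 = e \circ \cT(T,f_0)$, and then use Lemma \ref{slightlyfaithful}(i) to write $e_0 = \cT(T,\epsilon_0)$ for a unique $\epsilon_0 \in \End_\cT(T_0)$.

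The key step is to promote $\epsilon_0$ to an endomorphism of $X$. First I would check $f_0 \epsilon_0 f_1 = 0$: applying $\cT(T,-)$ gives $\cT(T,f_0)\,e_0\,\cT(T,f_1) = e\,\cT(T,f_0)\,\cT(T,f_1) = e\,\cT(T,f_0 f_1) = 0$, because consecutive maps of a $(d+2)$-angle compose to zero \cite{GKO}; since $f_0 \epsilon_0 f_1$ is a morphism out of $T_1 \in \add T$, the faithfulness in Lemma \ref{slightlyfaithful}(i) upgrades this to $f_0 \epsilon_0 f_1 = 0$ in $\cT$. Now applying the cohomological functor $\cT(-,X)$ to the $(d+2)$-angle yields, via \cite[prop.\ 2.5]{GKO}, an exact sequence $\cT(X,X) \xrightarrow{\cT(f_0,X)} \cT(T_0,X) \xrightarrow{\cT(f_1,X)} \cT(T_1,X)$; the relation $f_0 \epsilon_0 f_1 = 0$ says $f_0 \epsilon_0 \in \ker \cT(f_1,X) = \im \cT(f_0,X)$, so there is $\epsilon_X \in \End_\cT(X)$ with $\epsilon_X f_0 = f_0 \epsilon_0$. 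Applying $\cT(T,-)$ gives $\cT(T,\epsilon_X)\,\cT(T,f_0) = \cT(T,f_0)\,e_0 = e\,\cT(T,f_0)$, and since $\cT(T,f_0)$ is an epimorphism this forces $\cT(T,\epsilon_X) = e$.

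Finally I would derive the contradiction from Krull--Schmidt: $X$ indecomposable makes $\End_\cT(X)$ local, so $\epsilon_X$ is either invertible or lies in the nilpotent radical. In the first case $e = \cT(T,\epsilon_X)$ is an invertible idempotent, hence $e = 1$; in the second case $e$ is a nilpotent idempotent, hence $e = 0$. Both contradict the choice of $e$, so $M$ is indecomposable, completing the reduction. I expect the main obstacle to be exactly the promotion of $\epsilon_0$ to $\epsilon_X$ without assuming that $\cT(T,-)$ is full: the argument hinges on combining the vanishing $f_0 \epsilon_0 f_1 = 0$ with the exactness of $\cT(-,X)$ on the $(d+2)$-angle, so care is needed to invoke the correct exactness statement from \cite{GKO} together with the faithfulness in Lemma \ref{slightlyfaithful}(i).
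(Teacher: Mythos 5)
Your proof is correct, but it takes a genuinely different route from the paper's. The paper works directly with an arbitrary object $\cT( T,X )$ of $\cD$ and an arbitrary decomposition $\cT( T,X ) = M' \oplus M''$: it decomposes $X$ into indecomposables, uses \ref{condRes} together with Lemma \ref{lem:rad} to build a $( d+2 )$-angle whose last morphism $h$ lies in $\rad_{ \cT }$, deduces that the kernel of the induced algebra homomorphism $\pi : \cT( X,X ) \rightarrow \Hom_{ \Gamma }( \cT( T,X ),\cT( T,X ) )$ is contained in $\rad\, \cT( X,X )$, and then invokes the general theorem on lifting idempotents modulo a nil ideal, plus split idempotents in $\cT$, to realise $M'$ as $\cT( T,X' )$ for a direct summand $X'$ of $X$. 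You instead reduce, via Krull--Schmidt on both sides, to showing that $\cT( T,X )$ is indecomposable whenever $X$ is indecomposable with $\cT( T,X ) \neq 0$, and you lift one concrete idempotent $e$ by hand: first to $\cT( T,T_0 )$ using projectivity, then down to $\epsilon_0 \in \End_{ \cT }( T_0 )$ by Lemma \ref{slightlyfaithful}(i), and finally to $\epsilon_X \in \End_{ \cT }( X )$ using the contravariant exactness of $\cT( -,X )$ on the $( d+2 )$-angle (the same exactness the paper cites from \cite{GKO}, so your appeal to it is legitimate). The nice twist is that your lift $\epsilon_X$ need not be idempotent: locality of $\End_{ \cT }( X )$ makes $\epsilon_X$ invertible or nilpotent, so $e = \cT( T,\epsilon_X )$ is an invertible or nilpotent idempotent, forcing $e \in \{ 0, 1 \}$. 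What each approach buys: the paper's argument is shorter at the decisive step because the abstract idempotent-lifting theorem does the work, but it needs Lemma \ref{lem:rad} (radical membership of $h$) and the external references to Assem--Simson--Skowro\'{n}ski and Lam; your argument avoids both, replacing them by the Krull--Schmidt reduction and a longer but elementary diagram chase, and it yields the sharper byproduct, not stated in the paper, that $\cT( T,- )$ sends indecomposables it does not annihilate to indecomposables.
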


\begin{proof}
Consider an object $\cT( T,X )$ of $\cD$.  Suppose $\HomT X = M' \oplus M''$ for some $M',M'' \in \modf \Gamma$.  We will show $M' \in \D$.

Let $X_i$ denote the indecomposable direct summands of $X$.  We can obviously drop each $X_i$ which is mapped to zero by $\cT( T,- )$, so can assume $\cT( T,X_i ) \neq 0$ for each $i$.  Applying \ref{condRes} and Lemma \ref{lem:rad} to each $X_i$ and taking the direct sum of the resulting $( d+2 )$-angles shows that there is a $( d+2 )$-angle 
\[
  T_d \rightarrow \cdots \rightarrow T_0 \xrightarrow g X \xrightarrow h \Sigma^d T_d
\]
with $T_i \in \add T$ for each $i$ and $h \in \rad_{ \cT }$.  

Consider the induced algebra homomorphism
\[
  \pi: \T(X,X)\rightarrow \Hom_\Gamma(\HomT{X},\HomT{X}).
\]
If \(x\in \T(X,X)\) is in the kernel of \(\pi\), then \(xg=0\).  Then $x$ factors through $h$ by \cite[prop.\ 2.5(a)]{GKO} whence $x \in \rad_{ \cT }$.  Hence $\Ker \pi$ is contained in $\rad_{ \cT }( X,X ) = \rad \cT( X,X )$, so idempotents lift through $\pi$ by the combination of 
\cite[cor.\ I.2.3]{bluebook1} and \cite[thm.\ (21.28)]{Lam}.

Hence the projection $e : \cT( T,X ) \rightarrow \cT( T,X )$ onto the direct summand $M'$ can be lifted to an idempotent morphism $f : X \rightarrow X$.  Then $f$ is split by assumption, so $f$ is the projection onto a direct summand $X'$ of $X$, and it follows that $\cT( T,X' ) = M'$ whence $M' \in \cD$.
\end{proof}

\begin{Proposition}
\label{b-full}
If $T$ satisfies \ref{condRes}, then $\cT( T,- ) : \cT \rightarrow \modf \Gamma$ is a full functor.
\end{Proposition}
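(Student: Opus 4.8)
The goal is to show that for all $X, Y \in \cT$ the natural map $\cT( X,Y ) \rightarrow \Hom_\Gamma( \HomT X, \HomT Y )$ is surjective. Since $\HomT-$ is additive and $\Hom_\Gamma( -,\HomT Y )$ turns direct sums in the first variable into products, fullness for a general $X$ follows from fullness for each indecomposable summand of $X$; and when $\HomT X = 0$ the target is zero, so there is nothing to prove. First I would therefore reduce to the case where $X$ is indecomposable with $\HomT X \neq 0$, keeping $Y$ arbitrary.

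In this case, condition \ref{condRes} supplies a $( d+2 )$-angle
\[
  T_d \rightarrow \cdots \rightarrow T_1 \xrightarrow{ g_1 } T_0 \xrightarrow{ g_0 } X \xrightarrow{ h } \Sigma^d T_d
\]
with the $T_i \in \add T$ and $\HomT h = 0$. By Lemma \ref{lem:resolution} the induced map $\HomT{ g_0 } : \HomT{ T_0 } \rightarrow \HomT X$ is surjective, being the augmentation of a projective resolution, and consecutive morphisms in a $( d+2 )$-angle compose to zero, so $g_0 g_1 = 0$. Now given $\phi : \HomT X \rightarrow \HomT Y$, I would consider $\phi \circ \HomT{ g_0 } : \HomT{ T_0 } \rightarrow \HomT Y$. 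Since $T_0 \in \add T$, Lemma \ref{slightlyfaithful}(i) produces a (unique) morphism $\beta : T_0 \rightarrow Y$ with $\HomT \beta = \phi \circ \HomT{ g_0 }$.

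The key point is then to descend $\beta$ along $g_0$. I would compute $\HomT{ \beta g_1 } = \HomT \beta \circ \HomT{ g_1 } = \phi \circ \HomT{ g_0 g_1 } = 0$, and since $T_1 \in \add T$ the injectivity half of Lemma \ref{slightlyfaithful}(i) forces $\beta g_1 = 0$. Applying the functor $\cT( -,Y )$ to the $( d+2 )$-angle yields a long exact sequence by \cite[prop.\ 2.5(a)]{GKO}, in particular exactness of $\cT( X,Y ) \rightarrow \cT( T_0,Y ) \rightarrow \cT( T_1,Y )$; hence $\beta = \gamma g_0$ for some $\gamma : X \rightarrow Y$. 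Finally $\HomT \gamma \circ \HomT{ g_0 } = \HomT{ \gamma g_0 } = \HomT \beta = \phi \circ \HomT{ g_0 }$, and since $\HomT{ g_0 }$ is surjective this gives $\HomT \gamma = \phi$, proving fullness.

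The main obstacle is the descent step: one must manufacture a genuine morphism $X \rightarrow Y$ from the $\add T$-level datum $\beta$, and this is precisely where both halves of Lemma \ref{slightlyfaithful}(i) (surjectivity to build $\beta$, injectivity to force $\beta g_1 = 0$) combine with the weak-cokernel behaviour of $( d+2 )$-angles via \cite[prop.\ 2.5(a)]{GKO}. The vanishing $\HomT h = 0$ built into \ref{condRes} is what renders $\HomT{ g_0 }$ surjective and thus allows it to be cancelled at the end; without it the final cancellation, and hence the argument, would fail.
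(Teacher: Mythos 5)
Your proof is correct, but it takes a genuinely different route from the paper's. The paper applies \ref{condRes} to \emph{both} ends: it reduces to $X$ and $Y$ indecomposable, takes $( d+2 )$-angles over $X$ and over $Y$, applies $\cT( T,- )$ to get the beginnings of two projective resolutions (Lemma \ref{lem:resolution}), lifts $u$ to a chain map by the comparison theorem, lifts the relevant square back to $\cT$ via the equivalence $\add T \simeq \proj \Gamma$ (Lemma \ref{add-inj}(i)), and then invokes the morphism axiom for $( d+2 )$-angulated categories to complete this square to a morphism of $( d+2 )$-angles whose component $X \rightarrow Y$ is the desired preimage. You instead use only \emph{one} $( d+2 )$-angle, the one over the source $X$: you construct $\beta : T_0 \rightarrow Y$ directly from the bijection of Lemma \ref{slightlyfaithful}(i), kill $\beta g_1$ by the injectivity half of that same bijection, and descend $\beta$ along $g_0$ using exactness of the contravariant sequence $\cT( X,Y ) \rightarrow \cT( T_0,Y ) \rightarrow \cT( T_1,Y )$ from \cite[prop.\ 2.5]{GKO}. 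This buys several simplifications: you never need \ref{condRes} for $Y$, never need $Y$ indecomposable, and avoid both the comparison theorem and the angle-completion axiom. In fact your argument is essentially the paper's own proof of Proposition \ref{funcfin} (existence of left $\cD$-approximations) transplanted to the fullness statement, with surjectivity of $\HomT{g_0}$ (coming from $\HomT h = 0$) supplying the final cancellation. What the paper's heavier approach buys is a template: the construction of a morphism of $( d+2 )$-angles covering a given chain map is exactly the mechanism reused and refined in Lemma \ref{diagram-lemma-split-mono} and Proposition \ref{bFulfilled}, so the paper's proof of Proposition \ref{b-full} doubles as a warm-up for those later arguments.
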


\begin{proof}

Let \(u:\HomT X\rightarrow \HomT Y\) be a morphism in $\modf \Gamma$.  We must find \(f\in \cT(X,Y)\) with \(\HomT f=u\).  Without loss of generality, we can assume that $X$ and $Y$ are indecomposable.

If \(\HomT X=0\) or \(\HomT Y=0\), then we can set $f = 0$.

If \(\HomT X \neq 0\) and \(\HomT Y \neq 0\), then \ref{condRes} gives two $( d+2 )$-angles in $\cT$,
\begin{align*}
T_d\rightarrow \cdots \rightarrow T_0\xrightarrow{g} X&\xrightarrow{h} \Sigma^d T_d,\\
T'_d\rightarrow \cdots \rightarrow T'_0\xrightarrow{g'} Y&\xrightarrow{h'} \Sigma^d T'_d,
\end{align*}
with $T_i,T_i' \in \add T$ and $\cT( T,h ) = \cT( T,h' ) = 0$.  Applying the functor $\cT( T,- )$ gives the beginning of two augmented projective resolutions by Lemma \ref{lem:resolution}.  Hence the comparison theorem for projective resolutions gives the following commutative diagram. 
\begin{center}
\begin{tikzpicture}[xscale=1.6, yscale=2, font=\small]
\node (TdX) at (0,2) {\(\HomT {T_d}\)};
\node (dotsX) at (1,2) {\(\cdots\)};
\node (T1X) at (2,2) {\(\HomT {T_1}\)};
\node (T0X) at (3.5,2) {\(\HomT {T_0}\)};
\node (X) at (5,2) {\(\HomT X\)};
\node (sTdX) at (6,2) {$0$}; 
\node (TdY) at (0,1) {\(\HomT {T'_d}\)};
\node (dotsY) at (1,1) {\(\cdots\)};
\node (T1Y) at (2,1) {\(\HomT {T'_1}\)};
\node (T0Y) at (3.5,1) {\(\HomT {T'_0}\)};
\node (Y) at (5,1) {\(\HomT {Y}\)};
\node (sTdY) at (6,1) {$0$}; 

\draw[->] (TdX)--node[right]{\tiny $v_d$}(TdY);
\draw[->] (T1X)--node[right]{\tiny $v_1$}(T1Y);
\draw[->] (T0X)--node[right]{\tiny $v_0$}(T0Y);
\draw[->] (X)-- node[right]{\tiny $u$} (Y);

\draw[->] (TdX)--(dotsX);
\draw[->] (dotsX)--(T1X);
\draw[->] (T1X)--(T0X);
\draw[->] (T0X)--node[above, font=\tiny]{\(\HomT{g}\)}(X);
\draw[->] (X)--(sTdX);

\draw[->] (TdY)--(dotsY);
\draw[->] (dotsY)--(T1Y);
\draw[->] (T1Y)--(T0Y);
\draw[->] (T0Y)--node[below, font=\tiny]{\(\HomT{g'}\)}(Y);
\draw[->] (Y)--(sTdY);
\end{tikzpicture}
\end{center}
By Lemma \ref{add-inj}(i) the second square from the right can be lifted to \(\T\).  Completing to a morphism of $( d+2 )$-angles gives the following commutative diagram.
\begin{center}
\begin{tikzpicture}[xscale=1.6, yscale=2, font=\small]
\node (TdX) at (0,2) {\(T_d\)};
\node (dotsX) at (1,2) {\(\cdots\)};
\node (T1X) at (2,2) {{\(T_1\)}};
\node (T0X) at (3.5,2) {\(T_0\)};
\node (X) at (5,2) {\( X\)};
\node (sTdX) at (6.5,2) {\( \Sigma^{d}T_d\)};
\node (TdY) at (0,1) {\( {T'_d}\)};
\node (dotsY) at (1,1) {\(\cdots\)};
\node (T1Y) at (2,1) {\( {T'_1}\)};
\node (T0Y) at (3.5,1) {\( {T'_0}\)};
\node (Y) at (5,1) {\( {Y}\)};
\node (sTdY) at (6.5,1) {\( {\Sigma^{d}T'_d}\)};

\draw[->] (TdX)--node[right]{\tiny $p_d$}(TdY);
\draw[->] (T1X)--node[right]{\tiny $p_1$}(T1Y);
\draw[->] (T0X)--node[right]{\tiny $p_0$}(T0Y);
\draw[->] (X)-- node[right]{\tiny $f$} (Y);

\draw[->] (TdX)--(dotsX);
\draw[->] (dotsX)--(T1X);
\draw[->] (T1X)--(T0X);
\draw[->] (T0X)--node[above]{\tiny $g$}(X);
\draw[->] (X)--(sTdX);

\draw[->] (TdY)--(dotsY);
\draw[->] (dotsY)--(T1Y);
\draw[->] (T1Y)--(T0Y);
\draw[->] (T0Y)--node[below]{\tiny $g'$}(Y);
\draw[->] (Y)--(sTdY);
\end{tikzpicture}
\end{center}
The first diagram gives
\[
  u\cT( T,g ) = \cT( T,g' )v_0 = (*).
\]
We know $v_0 = \cT( T,p_0 )$ so have
\[
  (*) = \cT( T,g' )\cT( T,p_0 ) = \cT( T,f )\cT( T,g ),
\]
where the last equality is by the second diagram.  Since $\cT( T,g )$ is surjective, it follows that $u = \cT( T,f )$.
\end{proof}

\begin{Proposition}
\label{b-d-rigid}
If $T$ satisfies \ref{condRes}, then $\cD$ is a $d$-rigid subcategory of $\modf \Gamma$, that is, $\Ext_{ \Gamma }^i( \cD,\cD ) = 0$ for $1 \leqslant i \leqslant d-1$.
\end{Proposition}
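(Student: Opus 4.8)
The plan is to compute $\Ext_{\Gamma}^i(\cD,\cD)$ directly from the partial projective resolutions that condition \ref{condRes} provides, and then to recognise the resulting cochain complex as a segment of a long exact sequence obtained by applying a cohomological functor to a $(d+2)$-angle. First I would reduce to a convenient case. Since $\Ext_{\Gamma}^i(-,-)$ is additive in both arguments and every object of $\cD$ is a finite direct sum of modules of the form $\HomT{X}$ with $X \in \cT$ indecomposable, it suffices to prove $\Ext_{\Gamma}^i(\HomT{X},N) = 0$ for $1 \leqslant i \leqslant d-1$, where $X$ is indecomposable and $N = \HomT{Y} \in \cD$. If $\HomT{X} = 0$ there is nothing to show, so I may assume $\HomT{X} \neq 0$.

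Next I would apply condition \ref{condRes} to $X$, obtaining a $(d+2)$-angle
\[
  T_d \rightarrow \cdots \rightarrow T_0 \xrightarrow{g} X \xrightarrow{h} \Sigma^d T_d
\]
with $T_i \in \add T$ and $\HomT{h} = 0$. By Lemma \ref{lem:resolution} the complex $\HomT{T_d} \rightarrow \cdots \rightarrow \HomT{T_0} \rightarrow \HomT{X}$ is the beginning of an augmented projective resolution of $\HomT{X}$; write $P_i = \HomT{T_i}$ for $0 \leqslant i \leqslant d$. Because $i+1 \leqslant d$ whenever $i \leqslant d-1$, these $d+1$ terms already suffice to compute $\Ext_{\Gamma}^i(\HomT{X},N)$ for $1 \leqslant i \leqslant d-1$, namely as the cohomology at $\Hom_{\Gamma}(P_i,N)$ of the complex $\Hom_{\Gamma}(P_{\bullet},N)$.

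Then I would identify this Hom-complex with one living in $\cT$. By Lemma \ref{slightlyfaithful}(i) there are bijections $\cT(T_i,Y) \rightarrow \Hom_{\Gamma}(\HomT{T_i},\HomT{Y}) = \Hom_{\Gamma}(P_i,N)$, and these are natural in $T_i \in \add T$, so they intertwine the differentials, which on both sides are induced by the maps $T_i \rightarrow T_{i-1}$ of the $(d+2)$-angle. Hence $\Hom_{\Gamma}(P_{\bullet},N)$ is isomorphic, as a complex, to $\cT(T_{\bullet},Y)$, that is, to the complex obtained by applying $\cT(-,Y)$ to $T_d \rightarrow \cdots \rightarrow T_0$. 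Finally, applying the cohomological functor $\cT(-,Y)$ to the whole $(d+2)$-angle yields, by \cite[prop.\ 2.5]{GKO}, a long exact sequence
\[
  \cT(X,Y) \rightarrow \cT(T_0,Y) \rightarrow \cT(T_1,Y) \rightarrow \cdots \rightarrow \cT(T_d,Y) \rightarrow \cT(\Sigma^{-d}X,Y),
\]
which is exact at every interior term, in particular at $\cT(T_i,Y)$ for $1 \leqslant i \leqslant d-1$. Since this term occupies cohomological degree $i$ of the complex identified above, I conclude $\Ext_{\Gamma}^i(\HomT{X},N) = 0$ for $1 \leqslant i \leqslant d-1$, which gives $\Ext_{\Gamma}^i(\cD,\cD) = 0$ in that range.

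I expect the main obstacle to be the identification step: one must verify that the bijections of Lemma \ref{slightlyfaithful}(i) genuinely intertwine the differentials, so that $\Hom_{\Gamma}(P_{\bullet},N)$ and $\cT(T_{\bullet},Y)$ agree as \emph{complexes} and not merely term by term. Closely tied to this is the indexing bookkeeping that pins down precisely which interior positions of the long exact sequence correspond to $\Ext_{\Gamma}^1,\dots,\Ext_{\Gamma}^{d-1}$; once naturality and indexing are in place, the vanishing is immediate from \cite[prop.\ 2.5]{GKO}.
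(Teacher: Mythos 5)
Your proposal is correct and follows essentially the same route as the paper's own proof: reduce to indecomposables, use condition \ref{condRes} together with Lemma \ref{lem:resolution} to get the first $d+1$ terms of a projective resolution of $\cT(T,X)$, identify the resulting $\Hom_\Gamma$-complex with $\cT(T_\bullet,Y)$ via the natural bijections of Lemma \ref{slightlyfaithful}(i), and conclude vanishing of $\Ext^i_\Gamma$ for $1 \leqslant i \leqslant d-1$ from the exactness supplied by \cite[prop.\ 2.5(a)]{GKO}. Your extra care about naturality of the identification and about which interior positions of the long exact sequence compute which $\Ext$ groups is exactly the bookkeeping the paper leaves implicit.
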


\begin{proof}
It is enough to see that if $X,Y \in \cT$ are indecomposable, then
\begin{equation}
\label{equ:Ext_vanishing}
  \Ext_{ \Gamma }^i( \cT( T,X ),\cT( T,Y )) = 0
  \;\;\;\mbox{for}\;\;\;
  1 \leqslant i \leqslant d-1.
\end{equation}
This is clear for $\cT( T,X ) = 0$, so we can assume $\cT( T,X ) \neq 0$.  Condition \ref{condRes} gives a $( d+2 )$-angle $T_d\rightarrow \cdots \rightarrow T_0\rightarrow X\xrightarrow{h} \Sigma^d T_d$, and Lemma \ref{lem:resolution} implies that
\[
  \cT( T,T_d ) \rightarrow \cdots \cT( T,T_0 )
\]
are the first $d+1$ terms of a projective resolution of $\cT( T,X )$.  Hence the homology groups of the complex
\begin{equation}
\label{equ:complex_for_Ext}
  \Hom_{ \Gamma }( \cT( T,T_0 ),\cT( T,Y ))
  \rightarrow \cdots \rightarrow
  \Hom_{ \Gamma }( \cT( T,T_d ),\cT( T,Y ))
\end{equation}
are the $\Ext$ groups in Equation \eqref{equ:Ext_vanishing}.  But Lemma \ref{slightlyfaithful}(i) says that \eqref{equ:complex_for_Ext} is isomorphic to
\[
  \cT( T_0,Y ) \rightarrow \cdots \rightarrow \cT( T_d,Y )
\]
which is exact by \cite[prop.\ 2.5(a)]{GKO}.  Hence Equation \eqref{equ:Ext_vanishing} is satisfied.
\end{proof}

\begin{Proposition}
\label{a-Ext-closed}
\begin{enumerate}
\setlength\itemsep{4pt}

  \item  Assume \(T\) satisfies \ref{condGhostWeak} and \ref{condRes}.  
  
  If \(M\in \modf\Gamma\) satisfies \(\Ext_{\Gamma}^i( \cD,M )=0\) for $1 \leqslant i \leqslant d-1$, then \(M\in \D\).

  \item  Assume \(T\) satisfies \ref{condGhostDualWeak} and \ref{condRes}.  
  
  If \(N\in \modf\Gamma\) satisfies \(\Ext_{\Gamma}^i( N,\cD )=0\) for $1 \leqslant i \leqslant d-1$, then \(N\in \D\).

\end{enumerate}
\end{Proposition}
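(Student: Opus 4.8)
We prove (i); part (ii) is entirely dual, obtained by interchanging \ref{condGhostWeak} with \ref{condGhostDualWeak}, projective presentations with injective copresentations, $\add T$ with $\add ST$, and by appealing to Lemma \ref{split-ext}(ii) in place of Lemma \ref{split-ext}(i).

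So let $M \in \modf \Gamma$ satisfy $\Ext^i_{\Gamma}(\cD, M) = 0$ for $1 \le i \le d-1$. Taking a minimal projective presentation of $M$ and using that $\cT(T,-)$ restricts to an equivalence $\add T \to \proj \Gamma$ (Lemma \ref{add-inj}(i)), I would realise this presentation as $\cT(T, T_1) \xrightarrow{\cT(T,f)} \cT(T, T_0) \to M \to 0$ for a morphism $f \colon T_1 \to T_0$ in $\add T$. Condition \ref{condGhostWeak} then completes $f$ to a $( d+2 )$-angle
\[
  T_1 \xrightarrow{f} T_0 \xrightarrow{h_{d+1}} X_d \xrightarrow{h_d} \cdots \xrightarrow{h_2} X_1 \xrightarrow{h_1} \Sigma^d T_1
\]
with $\cT(T, h_i) = 0$ for some $1 \le i \le d+1$.

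Next I would apply $\cT(T,-)$ and invoke \cite[prop.\ 2.5(a)]{GKO} to obtain a complex that is exact at $\cT(T, T_0), \cT(T, X_d), \dots, \cT(T, X_1)$. Exactness at $\cT(T, T_0)$ identifies $M \cong \im \cT(T, h_{d+1})$, and the remaining exactness assembles, after setting $Z := \im \cT(T, h_1)$, into a $d$-extension
\[
  \varepsilon \colon \quad 0 \to M \to \cT(T, X_d) \to \cdots \to \cT(T, X_1) \to Z \to 0
\]
whose middle terms $\cT(T, X_k)$ all lie in $\cD$. The hypothesis on $M$ yields exactly $\Ext^i_{\Gamma}(\cT(T, X_{d-i}), M) = 0$ for $1 \le i \le d-1$, which is the running assumption of Lemma \ref{split-ext}(i). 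Hence, once I know that $\varepsilon$ represents $0$ in $\Ext^d_{\Gamma}(Z, M)$, Lemma \ref{split-ext}(i) makes $M \to \cT(T, X_d)$ a split monomorphism, so $M$ is a direct summand of $\cT(T, X_d) \in \cD$; as $\cD$ is closed under direct summands by Lemma \ref{b-direct-summands} (this is where condition \ref{condRes} is used), it will follow that $M \in \cD$.

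The crux is thus to show $\varepsilon = 0$ in $\Ext^d_{\Gamma}(Z, M)$, and this is exactly where the clause ``$\cT(T, h_i) = 0$ for some $i$'' is spent. If $i = d+1$ then $M = \im \cT(T, h_{d+1}) = 0$ and there is nothing to prove; if $i = 1$ then $Z = \im \cT(T, h_1) = 0$, whence $\Ext^d_{\Gamma}(Z, M) = 0$ and $\varepsilon = 0$ automatically. For $2 \le i \le d$ I would express $\varepsilon$ as the Yoneda product $\eta_1 \bullet \cdots \bullet \eta_d$ of the connecting classes $\eta_j \in \Ext^1_{\Gamma}(\im \cT(T, h_j), \im \cT(T, h_{j+1}))$ of the short exact sequences into which $\varepsilon$ splices, the splicing objects being the images $\im \cT(T, h_j)$ (with $\im \cT(T, h_{d+1}) = M$ and $\im \cT(T, h_1) = Z$). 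The assumption $\cT(T, h_i) = 0$ forces the splicing object $\im \cT(T, h_i)$ to vanish, so the factor $\eta_i$ lies in an $\Ext^1$-group whose first argument is $0$ and is therefore itself $0$; since the Yoneda product is additive in each factor, $\varepsilon = 0$. I expect this last step to be the main obstacle: the point is that a single vanishing ghost map annihilates the entire Yoneda class, which forces one to argue with the full $d$-extension $\varepsilon$ rather than with a shorter truncation, since Lemma \ref{split-ext} is available only for genuine $d$-extensions.
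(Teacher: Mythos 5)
Your proof is correct, and its skeleton is the same as the paper's: realise the minimal projective presentation via Lemma \ref{add-inj}(i), invoke \ref{condGhostWeak} to get the $(d+2)$-angle with a ghost map, apply $\cT(T,-)$ and \cite[prop.\ 2.5(a)]{GKO} to get a long exact sequence, produce a split monomorphism $M \rightarrow \cT(T,X_d)$ via Lemma \ref{split-ext}(i), and finish with Lemma \ref{b-direct-summands} (this is indeed where \ref{condRes} enters). The difference is in how the extension class is killed. The paper argues by cases on the position of the ghost map: $\cT(T,h_{d+1})=0$ gives $M=0$; $\cT(T,h_d)=0$ gives $M \cong \cT(T,X_d)$; and for $1 \leqslant i \leqslant d-1$ it \emph{truncates} the long exact sequence at $\cT(T,X_i)$, obtaining a $(d-i)$-extension $0 \rightarrow M \rightarrow \cT(T,X_d) \rightarrow \cdots \rightarrow \cT(T,X_i) \rightarrow 0$ whose ambient group $\Ext^{d-i}_{\Gamma}(\cT(T,X_i),M)$ vanishes wholesale by the hypothesis on $M$, so that Lemma \ref{split-ext}(i) applies. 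You instead retain the full $d$-extension ending in $Z = \im \cT(T,h_1)$ and show that its \emph{particular} Yoneda class vanishes, by factoring it as a splice product of $\Ext^1$-classes and observing that the factor at position $i$ lies in an $\Ext^1$-group with zero first argument. Your instinct about why you did this is well taken: Lemma \ref{split-ext} is stated for $d$-extensions with the fixed $d$ of Setup \ref{set:blanket0}, and the paper's application of it to a $(d-i)$-extension tacitly relies on the (true, but unstated) fact that the lemma and its homotopy-lifting proof work for extensions of any length. Your route uses the lemma exactly as stated, at the price of invoking the standard splice description and bilinearity of Yoneda products; the paper's route is shorter and avoids Yoneda calculus but silently reparametrises the lemma. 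Both arguments are valid, and your dualisation of part (ii) matches the paper's (which simply says the dual proof works).
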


\begin{proof}
(i):  By Lemma \ref{add-inj}(i) we can pick a morphism \(T_1 \xrightarrow{f} T_0\) in \(\add T\) such that
\[
  \HomT{T_1} \xrightarrow{ \cT( T,f ) } \HomT{T_0} \rightarrow M \rightarrow 0
\]
is a minimal projective presentation in $\modf \Gamma$.  By \ref{condGhostWeak} there exists a $( d+2 )$-angle in $\cT$,
\[
  T_1 \xrightarrow{f} T_0 \xrightarrow{h_{ d+1 }} X_d \xrightarrow{ h_d } \cdots \xrightarrow{h_2} X_1 \xrightarrow{h_1} \Sigma^d T_1,
\]
such that $\cT( T,h_i ) = 0$ for some $1 \leqslant i \leqslant d+1$.  There is an induced long exact sequence
\begin{align*}
  \cT( T,T_1 ) \xrightarrow{ \cT( T,f ) } \cT( T,T_0 ) \xrightarrow{ \cT( T,h_{ d+1 } ) } \cT( T,X_d ) \xrightarrow{ \cT( T,h_d ) } \cdots \\
\cdots  \xrightarrow{ \cT( T,h_2 ) } \cT( T,X_1 ) \xrightarrow{ \cT( T,h_1 ) } \cT( T,\Sigma^d T_1 ).
  \end{align*}

If $\cT( T,h_{ d+1 } ) = 0$ then $\cT( T,f )$ is surjective whence $M = 0$ so $M \in \cD$.

If $\cT( T,h_d ) = 0$ then $M \cong \cT( T,X_d )$ so $M \in \cD$.

If $\cT( T,h_i ) = 0$ for some $1 \leqslant i \leqslant d-1$, then the long exact sequence induces an exact sequence
\[
  0 \rightarrow M \xrightarrow{ \mu } \cT( T,X_d ) \rightarrow \cdots \rightarrow \cT( T,X_i ) \rightarrow 0.
\]
This is a $( d-i )$-extension representing an element in $\Ext^{ d-i }_{ \Gamma }( \cT( T,X_i ),M )$.  This $\Ext$ is zero by the assumption on $M$.    
It follows from Lemma \ref{split-ext}(i) that $\mu$ is split injective.  So $M$ is a direct summand of $\cT( T,X_d )$ which is in $\cD$, so $M \in \cD$ by Lemma \ref{b-direct-summands}.

(ii):  This is proved dually to (i).
\end{proof}

{\noindent \it Proof} of Theorem \ref{thm:C}, the implication (ii) $\Rightarrow$ (i):  Under condition (ii) in Theorem \ref{thm:C}, the functor $\cT( T,- ) : \cT \rightarrow \modf \Gamma$ is full by Proposition \ref{b-full}, and its essential image $\cD$ is $d$-cluster tilting in $\modf \Gamma$ by Propositions \ref{funcfin}, \ref{b-d-rigid}, and \ref{a-Ext-closed}.
\hfill $\Box$

\section{Proof of the implication (i) $\Rightarrow$ (iii) in Theorem \ref{thm:C}}
\label{sec:i_to_iii}

Recall that we still assume Setup \ref{set:blanket0}.  After providing the necessary ingredients, this section ends with a proof of the implication (i) $\Rightarrow$ (iii) in Theorem \ref{thm:C}.

\begin{Lemma}
\label{diagram-lemma-split-mono}
Assume that $\cT( T,- ) : \cT \rightarrow \modf \Gamma$ is a full functor, and that we are given the following commutative diagram in $\cT$.
\begin{center}
\begin{tikzpicture}[scale=2]
\node(T1) at (0,1) {\(T_1\)};
\node(T0) at (1,1) {\(T_0\)};
\node(T1') at (0,0) {\(T_1'\)};
\node(T0') at (1,0) {\(T_0'\)};
\node(X) at (2,1) {\(X\)};
\node(Z) at (2.0,0) {\(Z\)};
\draw[->] (T1) -- node[above]{\tiny \(f\)}(T0);
\draw[->] (T1') -- node[below]{\tiny \(f'\)}(T0');
\draw[->] (T0) -- node[above]{\tiny \(g\)}(X);
\draw[->] (T0') -- node[below]{\tiny \(g'\)}(Z);
\draw[->] (T1) -- node[left]{\tiny \(h_1\)}(T1');
\draw[->] (T0) -- node[right]{\tiny \(h_0\)}(T0');
\end{tikzpicture}
\end{center}

Suppose the following are satisfied:
\begin{enumerate}
\setlength\itemsep{4pt}

  \item $T_0, T_1' \in \add T$.

  \item $h_1$ and $h_0$ are isomorphisms.

  \item The first row is part of a $( d+2 )$-angle in $\cT$, with $g$ left minimal.

  \item $\cT( T,g' )$ is a weak cokernel of $\cT( T,f' )$ in $\cD$.

\end{enumerate}
Then there exists a split monomorphism $v : X \rightarrow Z$ completing to a larger commutative diagram.  

Suppose we also have:
\begin{enumerate}
\setcounter{enumi}{4}
\setlength\itemsep{4pt}

  \item \(g'\) is left minimal.

\end{enumerate}
Then $v$ is an isomorphism.
\end{Lemma}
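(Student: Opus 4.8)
The plan is to push the whole diagram through the functor $\cT(T,-)$ and to play off two different weak-cokernel properties against each other: the one that holds at the level of $\cT$ for the top row (since it is part of a $(d+2)$-angle, by \cite[prop.\ 2.5(a)]{GKO}) and the one assumed in (iv) at the level of $\cD$. The crucial bookkeeping device is that, by (i), $T_0$ and $T_1'$ lie in $\add T$, so Lemma \ref{slightlyfaithful}(i) makes $\cT(T,-)$ \emph{faithful} on morphisms emanating from $T_0$ or from $T_1'$; this is how equalities of $\Gamma$-module maps get upgraded to equalities of morphisms in $\cT$.

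First I would construct $v$. Since $\cT(T,g')$ is a weak cokernel of $\cT(T,f')$ we have $\cT(T,g'f')=\cT(T,g')\cT(T,f')=0$, and as $g'f'$ is a morphism out of $T_1'\in\add T$, faithfulness gives $g'f'=0$ in $\cT$. Hence $g'h_0 f = g'f'h_1 = 0$ using the commutative square. Because $g$ is a weak cokernel of $f$ in $\cT$ (\cite[prop.\ 2.5(a)]{GKO}), the morphism $g'h_0$ factors as $g'h_0=vg$ for some $v\colon X\to Z$, which is exactly the completion to the larger commutative diagram.

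Next I would build a candidate left inverse for $v$. The $\Gamma$-map $\cT(T,g)\cT(T,h_0)^{-1}\colon\cT(T,T_0')\to\cT(T,X)$ kills $\cT(T,f')$: rewriting $\cT(T,f')$ via the commutative square, the composite reduces to $\cT(T,gf)\cT(T,h_1)^{-1}=0$. By the weak-cokernel property (iv) in $\cD$ there is $r\colon\cT(T,Z)\to\cT(T,X)$ with $r\cT(T,g')=\cT(T,g)\cT(T,h_0)^{-1}$. Using fullness I lift $r$ to a morphism $\rho\colon Z\to X$. A short computation with $\cT(T,-)$ yields $\cT(T,\rho vg)=\cT(T,g)$, and since $\rho vg$ and $g$ are both morphisms $T_0\to X$ with $T_0\in\add T$, faithfulness upgrades this to $\rho vg=g$ in $\cT$. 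Left minimality of $g$ (condition (iii)) then forces $\rho v$ to be an automorphism $\psi$ of $X$, so $s:=\psi^{-1}\rho$ satisfies $sv=\id_X$ and $v$ is a split monomorphism.

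Finally, under the extra hypothesis (v) I would show the idempotent $e:=vs$ equals $\id_Z$. It is idempotent since $sv=\id_X$, and it fixes $g'$ because $g'=vgh_0^{-1}$ gives $eg'=vsvgh_0^{-1}=vgh_0^{-1}=g'$. Left minimality of $g'$ makes $e$ an automorphism, and an idempotent automorphism is the identity, so $vs=\id_Z$; together with $sv=\id_X$ this shows $v$ is an isomorphism. The step I expect to be the main obstacle is not any single calculation but the discipline of routing every key equality through $T_0$ or $T_1'$ so that faithfulness is actually available, while invoking the weak-cokernel property in $\cT$ and the one in $\cD$ at precisely the right moments; once the morphisms are placed correctly, the two left-minimality arguments close the proof cleanly.
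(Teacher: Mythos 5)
Your proposal is correct and follows essentially the same route as the paper's proof: construct $v$ from the weak cokernel property of the $(d+2)$-angle via \cite[prop.\ 2.5(a)]{GKO}, produce a map back from $Z$ by combining condition (iv) with fullness, upgrade the resulting $\Gamma$-module identities to identities in $\cT$ via Lemma \ref{slightlyfaithful}(i) and condition (i), and close with the two left-minimality arguments. The only cosmetic differences are that the paper transports the weak cokernel property along the isomorphism $(h_1,h_0)$ (so that $\cT(T,g'h_0)$ is a weak cokernel of $\cT(T,f)$) rather than precomposing with $\cT(T,h_0)^{-1}$, and it finishes the isomorphism part by showing $vw$ and $wv$ are both automorphisms instead of using your normalization $sv=\id_X$ and the idempotent trick.
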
\label{CokernelToSplit}

\begin{proof}
Condition (iv) implies $\cT( T,g'f' ) = \cT( T,g' )\cT( T,f' ) = 0$ when\-ce $g'f' = 0$ by condition (i) and Lemma \ref{slightlyfaithful}(i).  Hence $g'h_0f = g'f'h_1 = 0$ and it follows from condition (iii) and \cite[prop.\ 2.5(a)]{GKO} that there is a morphism $v : X \rightarrow Z$ such that
\begin{equation}
\label{equ:ghvg}
  g'h_0 = vg.
\end{equation}
We will show that \(v\) is a split monomorphism.

Condition (ii) says that $f'$ and $f$ are isomorphic in the morphism category of $\cT$, so hence $\cT( T,f' )$ and $\cT( T,f )$ are isomorphic in the morphism category of $\modf \Gamma$.  
Since $\cT( T,f' )$ has weak cokernel $\cT( T,g' )$, it follows that $\cT( T,f )$ has weak cokernel $\cT( T,g' )\cT( T,h_0 ) = \cT( T,g'h_0 )$.  
Then $\cT( T,g )\cT( T,f ) = \cT( T,gf ) = 0$ implies that there is $\phi : \cT( T,Z ) \rightarrow \cT( T,X )$ such that $\cT( T,g ) = \phi \circ \cT( T,g'h_0 )$.  
Since $\cT( T,- )$ is full, there exists some $w : Z \rightarrow X$ such that $\cT( T,w ) = \phi$, and it follows that $\cT( T,g ) = \cT( T,wg'h_0 )$.  
By condition (i) and Lemma \ref{slightlyfaithful}(i) this implies 
\begin{equation}
\label{equ:gwgh}
  g = wg'h_0.
\end{equation}

Equations \eqref{equ:ghvg} and \eqref{equ:gwgh} imply $g = wvg$.  Condition (iii) says that $g$ is left minimal, so $wv$ is an isomorphism.  In particular, $v$ is a split monomorphism.

Now suppose that $g'$ is left minimal.  Then so is $g'h_0$ since $h_0$ is an isomorphism by condition (ii).  Equations \eqref{equ:ghvg} and \eqref{equ:gwgh} imply $vwg'h_0 = g'h_0$, so $vw$ is an isomorphism.  We already proved that so is $wv$, so $v$ is an isomorphism.
\end{proof}

\begin{Proposition}
\label{aFulfilled}
If $\cT( T,- ) : \cT \rightarrow \modf \Gamma$ is a full functor and $\cD$ is a $d$-cluster tilting subcategory of $\modf \Gamma$, then $T$ satisfies \ref{condGhost} and \ref{condGhostDual}.
\end{Proposition}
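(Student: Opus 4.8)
The plan is to establish condition \ref{condGhost}; condition \ref{condGhostDual} then follows by the dual argument, obtained by passing to $\cT\op$ and using the Serre functor $S$, Lemma \ref{add-inj}(ii), Lemma \ref{slightlyfaithful}(ii), and the evident dual of Lemma \ref{diagram-lemma-split-mono} (with weak kernels and right minimal morphisms replacing weak cokernels and left minimal ones). So assume we are in the situation of \ref{condGhost}, with $M \in \modf \Gamma$ satisfying $\Ext^j_\Gamma( \cD,M ) = 0$ for $1 \leqslant j \leqslant d-1$ and a minimal projective presentation $\cT( T,T_1 ) \xrightarrow{ \cT( T,f ) } \cT( T,T_0 ) \rightarrow M \rightarrow 0$ induced by $f : T_1 \rightarrow T_0$ in $\add T$.

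First I would note that the Ext-vanishing hypothesis puts $M$ into $\cD$. Indeed, since $\cD$ is $d$-cluster tilting, the second defining equality in Definition \ref{def:d-CT}(i) reads $\cD = \{ U \in \modf \Gamma \mid \Ext^i_\Gamma( \cD,U ) = 0 \text{ for } 1 \leqslant i \leqslant d-1 \}$, and this is exactly the condition satisfied by $M$. Hence $M \cong \cT( T,\hat X )$ for some $\hat X \in \cT$. Because $\cT( T,- )$ is full, the cokernel map $\cT( T,T_0 ) \rightarrow M$ in $\modf \Gamma$ has the form $\cT( T,g' )$ for some $g' : T_0 \rightarrow \hat X$. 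As $\cT( T,g' )$ is the genuine cokernel of $\cT( T,f )$ and its target $M$ lies in $\cD$, it is in particular a weak cokernel of $\cT( T,f )$ in $\cD$.

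Next I would complete $f$ to a $( d+2 )$-angle $T_1 \xrightarrow{f} T_0 \xrightarrow{ h_{ d+1 } } X_d \xrightarrow{ h_d } \cdots \xrightarrow{ h_1 } \Sigma^d T_1$, arranged so that $h_{ d+1 }$ is left minimal; such a completion exists because $\cT$ is Krull--Schmidt, after splitting off a trivial direct summand of the $( d+2 )$-angle. I would then apply Lemma \ref{diagram-lemma-split-mono} to the commutative square whose top row is $T_1 \xrightarrow{f} T_0 \xrightarrow{ h_{ d+1 } } X_d$ and whose bottom row is $T_1 \xrightarrow{f} T_0 \xrightarrow{ g' } \hat X$, with $h_1 = 1_{ T_1 }$ and $h_0 = 1_{ T_0 }$. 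Its hypotheses hold: (i) and (ii) are clear, (iii) is the left minimality just arranged together with the fact that the top row is part of a $( d+2 )$-angle, and (iv) is the weak-cokernel statement of the previous paragraph. The lemma therefore produces a split monomorphism $v : X_d \rightarrow \hat X$ with $g' = v h_{ d+1 }$.

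Finally I would apply $\cT( T,- )$ to $g' = v h_{ d+1 }$, obtaining $\cT( T,g' ) = \cT( T,v ) \cT( T,h_{ d+1 } )$. Since $\cT( T,g' )$ is surjective, so is $\cT( T,v )$; but $\cT( T,v )$ is also a split monomorphism because $v$ is, so it is an isomorphism. Consequently $\cT( T,h_{ d+1 } )$ is surjective, and since $\im \cT( T,h_{ d+1 } ) = \Ker \cT( T,h_d )$ by \cite[prop.\ 2.5(a)]{GKO}, this forces $\cT( T,h_d ) = 0$, which is precisely what condition \ref{condGhost} demands. I expect the main obstacle to be the step guaranteeing a completion of $f$ with $h_{ d+1 }$ left minimal, together with the careful verification that all four hypotheses of Lemma \ref{diagram-lemma-split-mono} are met; the remaining manipulations are routine diagram chases with the long exact sequence attached to the $( d+2 )$-angle.
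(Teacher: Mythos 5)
Your proof is correct and follows essentially the same route as the paper's: realise $M \cong \cT( T,Z )$ via the $d$-cluster tilting property, lift the cokernel map by fullness, complete $f$ to a $( d+2 )$-angle with $h_{ d+1 }$ left minimal, apply Lemma~\ref{diagram-lemma-split-mono} to obtain a split monomorphism forcing $\cT( T,h_d ) = 0$, and dualise for \ref{condGhostDual}. The only differences are minor: where you invoke a Krull--Schmidt ``split off a trivial summand'' argument to arrange $h_{ d+1 }$ left minimal, the paper instead cites \cite[lem.\ 5.18(2)]{OppermannT} (completions with radical morphisms exist) together with \cite[lem.\ 2.11]{F} (radical morphisms force the next morphism to be left minimal), and where you deduce $\cT( T,h_d ) = 0$ from exactness of the induced sequence, the paper runs a short element chase using $h_d h_{ d+1 } = 0$ --- both variants are sound.
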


\begin{proof}
Suppose that we are in the situation of \ref{condGhost}, that is, $M \in \modf \Gamma$ satisfies
\begin{equation}
\label{equ:Ext_vanishing2}
  \Ext^j_{ \Gamma }( \cD,M ) = 0
  \;\;\;\mbox{for}\;\;\;
  1 \leqslant j \leqslant d-1,
\end{equation}
and \(T_1 \xrightarrow{f} T_0\) is a morphism in \(\add T\) for which
\begin{equation}
\label{equ:projective_presentation}
  \HomT{T_1} \xrightarrow {\HomT f} \HomT{T_0} \rightarrow M \rightarrow 0
\end{equation}
is a minimal projective presentation in $\modf \Gamma$.

Complete $f$ to a $( d+2 )$-angle
\[
  T_1 \xrightarrow{f} T_0 \xrightarrow{h_{ d+1 }} X_d \xrightarrow{ h_d } \cdots \xrightarrow{h_2} X_1 \xrightarrow{h_1} \Sigma^d T_1,
\]
with $h_i \in \rad_{ \cT }$ for $2 \leqslant i \leqslant d$, 
see \cite[lem.\ 5.18(2)]{OppermannT}.  Then $h_i$ is left minimal for $3 \leqslant i \leqslant d+1$ by \cite[lem.\ 2.11]{F}.  We will show $\cT( T,h_d ) = 0$, thereby establishing \ref{condGhost}.

Since $\cD$ is a $d$-cluster tilting subcategory, Equation \eqref{equ:Ext_vanishing2} implies $M \in \cD$.  Hence there is $Z \in \cT$ with $M \cong \cT( T,Z )$.  Since $\cT( T,- )$ is full, there is a diagram
\[
  T_1 \xrightarrow{f} T_0 \xrightarrow{g} Z
\]
which $\cT( T,- )$ maps to \eqref{equ:projective_presentation}.  We get a diagram,
\begin{center}
\begin{tikzpicture}[scale=2]
\node(T1) at (0,1) {\(T_1\)};
\node(T0) at (1,1) {\(T_0\)};
\node(T1') at (0,0) {\(T_1\)};
\node(T0') at (1,0) {\(T_0\)};
\node(X) at (2,1) {\(X_d\)};
\node(Z) at (2.05,0) {\(Z\),};
\draw[->] (T1) -- node[above]{\tiny \(f\)}(T0);
\draw[->] (T1') -- node[below]{\tiny \(f\)}(T0');
\draw[->] (T0) -- node[above]{\tiny \(h_{ d+1 }\)}(X);
\draw[->] (T0') -- node[below]{\tiny \(g\)}(Z);
\draw[double equal sign distance] (T1) -- (T1');
\draw[double equal sign distance] (T0) -- (T0');
\end{tikzpicture}
\end{center}
which satisfies conditions (i)--(iv) in Lemma \ref{diagram-lemma-split-mono}.  The lemma provides a split monomorphism $v : X_d \rightarrow Z$ satisfying $vh_{ d+1 } = g$.

To show $\cT( T,h_d ) = 0$, let $a : T \rightarrow X_d$ be given.  Consider $va : T \rightarrow Z$.  Since $\cT( T,g )$ is surjective, there is $u : T \rightarrow T_0$ such that $gu = va$.  Thus $vh_{ d+1 }u = gu = va$ whence $h_{ d+1 }u = a$ because $v$ is a split monomorphism.  But then $h_d a = h_d h_{ d+1 }u = 0 \circ u = 0$ as desired, where we used \cite[prop.\ 2.5(a)]{GKO}.

Condition \ref{condGhostDual} is established by a dual argument.
\end{proof}

\begin{Proposition}
\label{bFulfilled}
If $\cT( T,- ) : \cT \rightarrow \modf \Gamma$ is a full functor and $\cD$ is a $d$-cluster tilting subcategory of $\modf \Gamma$, then $T$ satisfies \ref{condRes}.
\end{Proposition}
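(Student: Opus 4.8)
The plan is to realise the required $(d+2)$-angle as a lift to $\cT$ of the beginning of a minimal projective resolution of $\HomT{X}$, so that the only genuine difficulty becomes the identification of the intermediate terms. First I would reduce to the case where $X$ is indecomposable with $M := \HomT{X} \neq 0$. By hypothesis $M \in \cD$, and since a $d$-cluster tilting subcategory is in particular $d$-rigid, Proposition \ref{b-d-rigid} (or the definition directly) gives $\Ext^j_{\Gamma}(\cD,M) = 0$ for $1 \leqslant j \leqslant d-1$. I then choose a minimal projective resolution of $M$ and lift the beginning of it to $\cT$: using the equivalence of Lemma \ref{add-inj}(i) I obtain objects $T_0,\dots,T_d \in \add T$ and morphisms $f_i : T_i \rightarrow T_{i-1}$ realising the resolution, and using fullness of $\HomT{-}$ I lift the augmentation $P_0 \rightarrow M$ to a morphism $g : T_0 \rightarrow X$. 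By construction $\HomT{g}$ is a projective cover of $M$, hence surjective.

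Next I would complete $g$ to a $(d+2)$-angle whose differentials all lie in $\rad_{\cT}$ (exactly as in the proof of Proposition \ref{aFulfilled}, via \cite[lem.\ 5.18(2)]{OppermannT}),
\[
  W_d \rightarrow \cdots \rightarrow W_1 \rightarrow T_0 \xrightarrow{g} X \xrightarrow{h} \Sigma^d W_d .
\]
Applying $\HomT{-}$ and invoking the long exact sequence of \cite[prop.\ 2.5]{GKO}, the surjectivity of $\HomT{g}$ forces $\HomT{h} = 0$, so the ghost requirement in \ref{condRes} is automatic once $g$ is a projective cover. What remains --- and this is the heart of the matter --- is to show that the intermediate terms $W_1,\dots,W_d$ can be taken in $\add T$, equivalently (by Lemma \ref{lem:objects_sent_to_projectives}, applied to their indecomposable summands) that each $\HomT{W_i}$ is projective.

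To establish this I would compare the minimal completing angle above with the lifted minimal projective resolution $T_d \xrightarrow{f_d} \cdots \xrightarrow{f_1} T_0 \xrightarrow{g} X$, which share the bottom datum $T_0 \xrightarrow{g} X$. Working one homological degree at a time, I would feed this comparison into the diagram lemma (Lemma \ref{diagram-lemma-split-mono}): the truncated projective resolution supplies the weak-cokernel hypothesis required there by Lemma \ref{projres-d-cokernel}, the minimality statements of Lemmas \ref{non-proj-syzygy} and \ref{left-min-proj-res} supply the left minimality needed to promote the resulting split monomorphisms to isomorphisms, and the $\Ext$-vanishing noted above lets me split the intervening $d$-extensions through Lemma \ref{split-ext}. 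The upshot is an isomorphism of the completing angle with the lifted resolution, whence $W_i \cong T_i \in \add T$. I expect precisely this last step to be the main obstacle: the construction of the angle and the vanishing $\HomT{h} = 0$ are essentially formal, but controlling the intermediate terms --- the only place where the full strength of ``$\cD$ is $d$-cluster tilting'', rather than merely $d$-rigid, is used --- requires the careful matching of the completing angle to the minimal projective resolution through the diagram lemma and the minimality lemmas.
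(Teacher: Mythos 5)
Your reduction, the lifting of the minimal projective resolution, the completion of $g$ to a $(d+2)$-angle with radical differentials, and the observation that surjectivity of $\HomT{g}$ forces $\HomT{h}=0$ via the exact sequence of \cite[prop.\ 2.5]{GKO} are all correct, and you rightly locate the difficulty in showing $W_1,\dots,W_d\in\add T$. But that step cannot be carried out with the tools you cite, because your comparison runs in the wrong direction. Lemma \ref{diagram-lemma-split-mono} takes as \emph{input} isomorphisms on the two left-hand columns of the diagram and produces as \emph{output} a split monomorphism (then isomorphism) on the column one step to the \emph{right}: it propagates identifications in the direction of the arrows, starting from the $\add T$ end. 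In your comparison the known isomorphisms sit at the right-hand end (the shared datum $T_0 \xrightarrow{g} X$), and the unknown objects $W_i$ sit to the left of it, so there is no configuration to which Lemma \ref{diagram-lemma-split-mono} applies; you would need a right-to-left analogue.

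That analogue is not available, for two structural reasons. First, its key hypothesis fails: the $d$-rigidity of $\cD$ makes a projective resolution a supply of weak \emph{cokernels} in $\cD$ (Lemma \ref{projres-d-cokernel}), but the dual comparison would need $\HomT{f_1}$ to be a weak \emph{kernel} of $\HomT{g}$ in $\cD$, i.e.\ every morphism $D \rightarrow \omega M$ with $D \in \cD$ would have to lift through the projective cover $\HomT{T_1} \twoheadrightarrow \omega M$. Already for $d=1$ (where the rigidity condition is vacuous) this holds only when $\omega M$ is projective, i.e.\ $\pdim M \leqslant 1$, so it is not a consequence of the hypotheses. Second, there is a ghost problem: the proof of Lemma \ref{diagram-lemma-split-mono} repeatedly upgrades equalities of the form $\HomT{u}=\HomT{u'}$ to $u=u'$ using Lemma \ref{slightlyfaithful}(i), which requires the \emph{source} of the morphisms to lie in $\add T$; in the dualized argument the relevant morphisms have source $W_i$, which is precisely what you do not yet control, so ghost morphisms cannot be excluded. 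This is exactly why the paper completes $f_d$, not $g$, to a $(d+2)$-angle: then both rows begin with $T_d \xrightarrow{f_d} T_{d-1}$, the identifications start at the $\add T$ end and propagate rightward as Lemma \ref{diagram-lemma-split-mono} requires (with Lemma \ref{left-min-proj-res} supplying left minimality of the $f_i$), and at the last step the split monomorphism $\upsilon : Y \rightarrow X$ is promoted to an isomorphism not by minimality of $f_0$ (which is unavailable) but by indecomposability of $X$; only after that does $\HomT{h}=0$ follow from surjectivity of $\HomT{g_0}$. Finally, Lemma \ref{split-ext} plays no role in this argument (it is used for Proposition \ref{a-Ext-closed}), so appealing to it does not repair the step.
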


\begin{proof}
Let \(X\in \T\) be indecomposable with \(\HomT X\neq 0\).

If $\cT( T,X )$ is a projective $\Gamma$-module, then \(X\in\add T\) by Lemma \ref{lem:objects_sent_to_projectives}, so the trivial $( d+2 )$-angle $0 \rightarrow \cdots \rightarrow 0 \rightarrow X \xrightarrow{ 1_X } X \rightarrow \Sigma^d 0$ can be used in \ref{condRes}.

Suppose that $\cT( T,X )$ is not a projective $\Gamma$-module.  By Lemmas \ref{add-inj}(i) and \ref{slightlyfaithful}(i), the augmented minimal projective resolution of $\cT( T,X )$ can be written in the form
\[
  \cdots \xrightarrow{\HomT{f_{d+1}}} \HomT{T_d} \xrightarrow{\HomT{f_{d}}} \cdots \xrightarrow{\HomT{f_1}} \HomT{T_0} \xrightarrow{\HomT{f_0}} \HomT{X}\rightarrow 0
\]
with the $T_i$ in $\add T$.  The morphism \(f_d\) can be completed to a $( d+2 )$-angle, which is the first row in the following diagram.
\begin{center}
\begin{tikzpicture}[scale=1.5]
\node (Tdupper) at (0,1) {\(T_d\)};
\node (Td-upper) at (1,1) {\(T_{d-1}\)};
\node (Tdlower) at (0,0) {\(T_d\)};
\node (Td-lower) at (1,0) {\(T_{d-1}\)};
\node (X-2) at (2,1) {\(X_{d-2}\)};
\node (dotsu) at (3,1) {\(\cdots\)};
\node (X1) at (4,1){\(X_1\)};
\node (X0) at (5,1){\(X_0\)};
\node (Y) at (6,1){\(Y\)};
\node (STd) at (7,1){\(\Sigma^d T_d\)};

\node (T-2) at (2,0){\(T_{d-2}\)};
\node (dotsl) at (3,0){\(\cdots\)};
\node (T1) at (4,0){\(T_1\)};
\node (T0) at (5,0){\(T_0\)};
\node (X) at (6,0){\(X\)};

\draw[->] (Tdupper)--node[above]{\tiny \(f_d\)}(Td-upper);
\draw[->] (Td-upper)--node[above]{\tiny \(g_{d-1}\)}(X-2);
\draw[->] (X-2) -- node[above]{\tiny \(g_{d-2}\)}(dotsu);
\draw[->] (dotsu) -- node[above]{\tiny \(g_{2}\)}(X1);
\draw[->] (X1) -- node[above]{\tiny \(g_1\)}(X0);
\draw[->] (X0) -- node[above]{\tiny \(g_0\)}(Y);
\draw[->] (Y) -- node[above]{\tiny \(h\)}(STd);

\draw[->] (Tdlower)--node[below]{\tiny \(f_d\)}(Td-lower);
\draw[->] (Td-lower) -- node[below]{\tiny \(f_{d-1}\)}(T-2);
\draw[->] (T-2) -- node[below]{\tiny \(f_{d-2}\)}(dotsl);
\draw[->] (dotsl) -- node[below]{\tiny \(f_2\)}(T1);
\draw[->] (T1) -- node[below]{\tiny \(f_1\)}(T0);
\draw[->] (T0) -- node[below]{\tiny \(f_0\)}(X);

\draw [double equal sign distance] (Tdupper)--(Tdlower);
\draw [double equal sign distance] (Td-upper)--(Td-lower);
\end{tikzpicture}
\end{center}
We will use Lemma \ref{diagram-lemma-split-mono} repeatedly.  We start by verifying conditions (i)--(v) in the lemma for some of the objects and morphisms in the diagram.

(i): We already know that the $T_i$ are in $\add T$.

(ii): The identity morphisms in the diagram are isomorphisms.

(iii): When constructing the $( d+2 )$-angle in the first row of the diagram, we can assume $g_i \in \rad_{ \cT }$ for $0 \leqslant i \leqslant d-2$ by \cite[lem.\ 5.18(2)]{OppermannT}.  Then $g_i$ is left minimal for $1 \leqslant i \leqslant d-1$ by \cite[lem.\ 2.11]{F}.  Moreover, $\cT( T,f_d )$ is a morphism in a minimal projective resolution so is right minimal.  By Lemma \ref{add-inj}(i), so is $f_d$.  Then $\Sigma^d f_d$ is right minimal, forcing $h \in \rad_{ \cT }$ and hence $g_0$ left minimal by \cite[lem.\ 2.11]{F}.  Summing up, $g_i$ is left minimal for $0 \leqslant i \leqslant d-1$.

(iv): The complex
\[
  \HomT{T_{d-1}} \xrightarrow{\HomT{f_{d-1}}} \cdots \xrightarrow{\HomT{f_1}} \HomT{T_0} \xrightarrow{\HomT{f_0}} \HomT{X} \rightarrow 0
\]
is a \(d\)-cokernel of \(\HomT{f_{d}}\) by Lemma \ref{projres-d-cokernel}. 
In particular, \(\HomT{f_i}\) is a weak cokernel of \(\HomT{f_{i+1}}\) for $1 \leqslant i \leqslant d-1$, and $\cT( T,f_0 )$ is a cokernel of $\cT( T,f_1 )$.

(v):  Lemma \ref{left-min-proj-res} says that $\cT( T,f_i )$ is left minimal for $1 \leqslant i \leqslant d$.  By Lemma \ref{add-inj}(i) this implies that $f_i$ is left minimal for $1 \leqslant i \leqslant d$.

We can now use Lemma \ref{diagram-lemma-split-mono} repeatedly to get the following commutative diagram.
\begin{center}
\begin{tikzpicture}[scale=1.5]
\node (Tdupper) at (0,1) {\(T_d\)};
\node (Td-upper) at (1,1) {\(T_{d-1}\)};
\node (Tdlower) at (0,0) {\(T_d\)};
\node (Td-lower) at (1,0) {\(T_{d-1}\)};
\node (X-2) at (2,1) {\(X_{d-2}\)};
\node (dotsu) at (3,1) {\(\cdots\)};
\node (X1) at (4,1){\(X_1\)};
\node (X0) at (5,1){\(X_0\)};
\node (Y) at (6,1){\(Y\)};
\node (STd) at (7,1){\(\Sigma^d T_d\)};

\node (T-2) at (2,0){\(T_{d-2}\)};
\node (dotsl) at (3,0){\(\cdots\)};
\node (T1) at (4,0){\(T_1\)};
\node (T0) at (5,0){\(T_0\)};
\node (X) at (6,0){\(X\)};

\draw[->] (Tdupper)--node[above]{\tiny \(f_d\)}(Td-upper);
\draw[->] (Td-upper)--node[above]{\tiny \(g_{d-1}\)}(X-2);
\draw[->] (X-2) -- node[above]{\tiny \(g_{d-2}\)}(dotsu);
\draw[->] (dotsu) -- node[above]{\tiny \(g_{2}\)}(X1);
\draw[->] (X1) -- node[above]{\tiny \(g_1\)}(X0);
\draw[->] (X0) -- node[above]{\tiny \(g_0\)}(Y);
\draw[->] (Y) -- node[above]{\tiny \(h\)}(STd);

\draw[->] (Tdlower)--node[below]{\tiny \(f_d\)}(Td-lower);
\draw[->] (Td-lower) -- node[below]{\tiny \(f_{d-1}\)}(T-2);
\draw[->] (T-2) -- node[below]{\tiny \(f_{d-2}\)}(dotsl);
\draw[->] (dotsl) -- node[below]{\tiny \(f_2\)}(T1);
\draw[->] (T1) -- node[below]{\tiny \(f_1\)}(T0);
\draw[->] (T0) -- node[below]{\tiny \(f_0\)}(X);

\draw [double equal sign distance] (Tdupper)--(Tdlower);
\draw [double equal sign distance] (Td-upper)--(Td-lower);

\draw[->] (X-2) -- node[sloped,below]{\tiny $\cong$} (T-2);
\draw[->] (X1) -- node[sloped,below]{\tiny $\cong$} (T1);
\draw[->] (X0) -- node[sloped,below]{\tiny $\cong$} (T0);
\draw[right hook->] (Y) -- node[left]{\tiny $\upsilon$} node[right]{\tiny split} (X);
\end{tikzpicture}
\end{center}
In the final step, we only know that $g_0$ is left minimal, not that $f_0$ is left minimal, so Lemma \ref{diagram-lemma-split-mono} only gives a split monomorphism $\upsilon : Y \hookrightarrow X$.  However, $X$ is indecomposable so $\upsilon$ is either zero or an isomorphism.  If it were zero, then the rightmost commutative square in the diagram would force $f_0 = 0$ whence $\cT( T,f_0 ) = 0$, contradicting that $\cT( T,f_0 )$ is a surjection onto the non-zero module $\cT( T,X )$.  It follows that $\upsilon$ is an isomorphism.

Hence the rightmost commutative square in the diagram implies that $\cT( T,g_0 )$ is surjective, and so $\cT( T,h ) = 0$ by \cite[prop.\ 2.5(a)]{GKO}.  Hence the $( d+2 )$-angle in the first row of the diagram can be used in \ref{condRes}.
\end{proof}

{\noindent \it Proof} of Theorem \ref{thm:C}, the implication (i) $\Rightarrow$ (iii):
Assuming condition (i) in Theorem \ref{thm:C}, the object $T$ satisfies \ref{condGhost} and \ref{condGhostDual} by Proposition \ref{aFulfilled}, and \ref{condRes} by Proposition \ref{bFulfilled}.
\hfill $\Box$

\section{Proof of Theorem \ref{thm:AB} and Corollary \ref{cor:repfin}}
\label{sec:proofs}

Recall that we still assume Setup \ref{set:blanket0}.

{\medskip \noindent \it Proof} of Theorem \ref{thm:AB}:  

(i):  This follows from Theorem \ref{thm:C} and Lemma \ref{lem:OT_connection}.

(ii):  Theorem \ref{thm:C} and Lemma \ref{lem:OT_connection} show that $\cT( T,- ) : \cT \rightarrow \modf \Gamma$ is full, so (ii) amounts to showing that if $g : X \rightarrow Y$ is a morphism in $\cT$, then
\[
  \cT( T,g ) = 0
  \Leftrightarrow
  \mbox{$g$ factors through an object in $\add \Sigma^d T$.}
\]

To show $\Rightarrow$, consider the $( d+2 )$-angle in Definition \ref{def:OT}(ii).  Since $T_0 \in \add T$, the condition $\cT( T,g ) = 0$ implies $gf_0 = 0$.  By \cite[prop.\ 2.5(a)]{GKO} there is $g' : \Sigma^d T_d \rightarrow Y$ such that $g = g'h$.  That is, $g$ has been factored through $\Sigma^d T_d \in \add \Sigma^d T$.  The implication $\Leftarrow$ is clear since $\cT( T,\Sigma^d T ) = 0$ by Definition \ref{def:OT}(i). 

(iii):  Recall that $S$ is the Serre functor of $\cT$.  By Definition \ref{def:OT}(ii) there is a $( d+2 )$-angle in $\cT$,
\[
  T_d  \xrightarrow{}\cdots \xrightarrow{} T_0 \xrightarrow{} ST \xrightarrow{} \Sigma^dT_d,
\]
with the $T_i$ in $\add T$.  Applying $\cT( T,- )$ gives a sequence in $\modf \Gamma$,
\[
  \HomT{\Sigma^{-d}ST} \rightarrow\HomT{T_d} \rightarrow{\cdots}\rightarrow \HomT{T_0} \rightarrow \cT( T,ST ) \rightarrow \HomT{\Sigma^dT_d},
\]
which is exact by \cite[prop.\ 2.5(a)]{GKO}.  By Serre duality we have \[\cT( T,\Sigma^{ -d }ST ) \cong D\cT( T,\Sigma^d T ) = 0\text{ and }\cT( T,ST ) \cong D\cT( T,T ) = D\Gamma\] as right-$\Gamma$-modules.  Moreover, $\cT( T,\Sigma^d T_d ) = 0$ by Definition \ref{def:OT}(i).  The sequence hence reads
\[
  0 \rightarrow\HomT{T_d} \rightarrow{\cdots}\rightarrow \HomT{T_0} \rightarrow ( D\Gamma )_{ \Gamma } \rightarrow 0.
\]
This provides a projective resolution of $( D\Gamma )_{ \Gamma }$ with at most $d+1$ non-zero projective modules.  Consequently, each injective right $\Gamma$-module has projective dimension $\leqslant d$.  

The opposite category $\cT^{ \op }$ is $( d+2 )$-angulated, and $T$ is a cluster tilting object in the sense of Oppermann--Thomas of $\cT^{ \op }$ with endomorphism algebra $\Ext_{ \cT^{ \op } }T = \Gamma^{ \op }$.  Applying to this setup what we already proved shows that each injective right $\Gamma^{ \op }$-module has projective dimension $\leqslant d$.  That is, each injective left $\Gamma$-module has projective dimension $\leqslant d$.

The statements about the injective dimension of projective modules follow by $k$-linear duality.

(iv):  It is well-known that if the global dimension of $\Gamma$ is finite, then it is equal to the projective dimension of $( D\Gamma )_{ \Gamma }$, so (iv) follows from (iii).
\hfill $\Box$

{\medskip \noindent \it Proof} of Corollary \ref{cor:repfin}:  

(i):  Lemma \ref{lem:weakly_representation_finite} says that $\cD$ has finitely many indecomposable objects, so $\cD = \add M$ for some $M \in \modf \Gamma$.  Theorem \ref{thm:AB}(i) says that $\cD$ is a $d$-cluster tilting subcategory of $\modf \Gamma$, so $M$ is a $d$-cluster tilting module.  Hence $\Gamma$ is weakly $d$-representation finite.

(ii):  If $\Gamma$ has finite global dimension, then the global dimension is at most $d$ by Theorem \ref{thm:AB}(iv).  Since $\Gamma$ is weakly $d$-representation finite by (i), it is then $d$-representation finite.
\hfill $\Box$

\section{Two classes of examples}
\label{sec:examples}

We conclude with two classes of examples.  The first shows how Theorem \ref{thm:AB} and Corollary \ref{cor:repfin} imply \cite[thm.\ 5.6]{OppermannT}.  Recall that the notion of $d$-representation finite algebras was defined in \cite[def.\ 2]{IO}, and that large classes of such algebras exist, see for instance \cite[thm.\ 3.11]{HI} and \cite[sec.\ 5]{IO}.

\begin{Example}
\label{exa:OT}
Let $\Lambda$ be a $d$-representation finite $k$-algebra.  In \cite[sec.\ 5]{OppermannT} was constructed a so-called $( d+2 )$-angulated cluster category $\cO_{ \Lambda }$.  Let $T \in \cO_{ \Lambda }$ be a cluster tilting object in the sense of Oppermann--Thomas with endomorphism algebra $\Gamma = \End_{ \cO_{ \Lambda } }T$.

Our results apply to this situation because the conditions of Setup \ref{set:blanket0} are satisfied:  The category $\cO_{ \Lambda }$ is $k$-linear $\Hom$-finite with split idempotents by construction, see \cite[thms.\ 5.14 and 5.25]{OppermannT}, and it has a Serre functor by \cite[thm.\ 5.2]{OppermannT}.  Observe that $\cO_{ \Lambda }$ has finitely many indecomposable objects by \cite[thm.\ 5.2(1)]{OppermannT}.

We can recover the results of \cite[thm.\ 5.6]{OppermannT} on $\cO_{ \Lambda }$, $T$, and $\Gamma$ as follows:  Consider the functor $\cO_{ \Lambda }( T,- ) : \cO_{ \Lambda } \rightarrow \modf \Gamma$.  Theorem \ref{thm:AB} says that its essential image $\cD$ is $d$-cluster tilting in $\modf \Gamma$, that $\cO_{ \Lambda }( T,- )$ induces an equivalence of categories
\[
  \cO_{ \Lambda } / [ \add \Sigma^d T ] \xrightarrow{ \sim } \cD,
\]
and that $\Gamma$ is $d$-Gorenstein.  Corollary \ref{cor:repfin} says that $\Gamma$ is weakly $d$-representation finite, and that if it has finite global dimension, then it is $d$-representation finite.
\end{Example}

\begin{Example}
Let $\Lambda$ be a $d$-representation finite $k$-algebra.  Let $\cF$ be the unique $d$-cluster tilting subcategory of $\modf \Lambda$, and consider the full subcategory
\[
  \overline{ \cF }
  =
  \add \{ \Sigma^{ id }F \mid i \in \BZ, F \in \cF \}
\]
of the derived category $\Db( \modf \Lambda )$.  It is clearly invariant under $\Sigma^d$, and it is a $d$-cluster tilting subcategory of $\Db( \modf \Lambda )$ by \cite[thm.\ 1.21]{Iyama}, so it is a $( d+2 )$-angulated category by \cite[thm.\ 1]{GKO}.

Our results apply to this situation because the conditions of Setup \ref{set:blanket0} are satisfied:  The category $\overline{ \cF }$ is $k$-linear $\Hom$-finite with split idempotents because it is a full subcategory of $\Db( \modf \Lambda )$ closed under direct summands, and \cite[thm.\ 3.1]{IO2} implies that the Serre functor $S_D$ of $\Db( \modf \Lambda )$ restricts to a Serre functor $S$ of $\overline{ \cF }$.

Set $T = \Lambda_{ \Lambda }$.  Then $\overline{ \cF }( T,- )$ is a restriction of $\Hom_{ \Db( \modf \Lambda ) }( \Lambda_{ \Lambda },- )$, so the endomorphism algebra $\End_{ \overline{ \cF } }T$ is
\[
  \overline{ \cF }( T,T ) \cong \Lambda
\]
and the functor $\overline{ \cF }( T,- ) : \overline{ \cF } \rightarrow \modf \Lambda$ can be identified with
\[
  \H^0 : \overline{ \cF } \rightarrow \modf \Lambda.
\]

By definition, each object of $\overline{ \cF }$ is a finite direct sum of the form
\[
  \overline{ F } = \bigoplus_i \Sigma^{ id }F_i,
\]
and $\H^0( \overline{ F } ) = F_0$.  It follows that the essential image of $\overline{ \cF }( T,- ) = \H^0( - )$ is $\cF$, which is $d$-cluster tilting in $\modf \Lambda$, and that $\overline{ \cF }( T,- ) = \H^0( - )$ is full.

By Theorem \ref{thm:C} the object $T$ satisfies \ref{condGhost}, \ref{condGhostDual}, and \ref{condRes}.  However, $T$ does not satisfy \ref{condKer} since $\Sigma^{ 2d }T$ is mapped to $0$ by $\overline{ \cF }( T,- ) = \H^0( - )$, but is not in $\add \Sigma^d T$.

Finally, the category $\overline{\cF}$ is stable under the functor $S_d = S_D \Sigma^{ -d }$ by \cite[thms.\ 2.16 and 2.21]{IO2}, where $S_D$ is again the Serre functor of $\Db( \modf \Lambda )$.  The functor $S_d$ plays the role of AR translation of $\overline{ \cF }$; in particular, it is an autoequivalence of $\overline{ \cF }$.  It follows that for each integer $\ell$, the object $S_d^{ \ell }T$ also satisfies \ref{condGhost}, \ref{condGhostDual}, and \ref{condRes}, but not \ref{condKer}.
\end{Example}

\medskip
\noindent
{\bf Acknowledgement.}
This work was supported by EPSRC grant EP/P016014/1 ``Higher Dimensional Homological Algebra''.  Karin M.\ Jacobsen is grateful for the hospitality of Newcastle University during the spring semester of 2017.

\end{document}